\newtheorem{theorem}{Theorem}
\newtheorem*{theorem*}{Theorem}
\newtheorem{lemma}[theorem]{Lemma}
\newtheorem{corollary}[theorem]{Corollary}
\theoremstyle{remark}
\newtheorem{remark}[theorem]{Remark}
\def\N{{\mathbb{N}}}
\def\Z{{\mathbb{Z}}}
\def\Rl{{\mathbb{R}}}
\def\Cx{{\mathbb{C}}}
\def\tr{{\mathrm{tr}\,}}
\def\sgn{{\mathrm{sgn}\,}}
\def\Re{{\mathrm{Re}\,}}
\def\Im{{\mathrm{Im}\,}}
\def\cS{{\mathcal{S}}}
\def\cA{{\mathfrak{A}}}
\def\supp{{\mathrm{supp}\,}}
\def\Pl{{\mathcal{P}}}
\def\const{{\mathrm{const}\,}}
\def\cC{{\mathfrak{C}}}
\begin{document}

\title{Fr\'echet differentiability of~$\cS^p$ norms }

\author{Denis Potapov}
\email{d.potapov@unsw.edu.au}
\thanks{Research is partially supported by ARC}
\author{Fedor Sukochev}
\email{f.sukochev@unsw.edu.au}
\address{School of Mathematics \& Statistics, University of NSW,
  Kensington NSW 2052 AUSTRALIA}

\maketitle

\bibliographystyle{short}

\begin{abstract} One of the long standing questions in the theory 
of Schatten-von Neumann ideals of compact operators
is whether their norms have the same differentiability properties as the norms of their commutative counterparts. 
We answer this question in the affirmative.
A key technical observation underlying our proof is a discovery of connection between this question and recent 
affirmative resolution of L.S. Koplienko's conjecture concerning existence of higher order spectral shift functions.
\end{abstract}

It was conjectured  in~\cite{TJ1975Diff} and ~\cite[Remark, p.35]{AF-1992}, that
the norm of the
Schatten-von Neumann class~$S^p$ on an arbitrary real Hilbert space ${\mathcal
H}$ is
$[p]$-times Fr\'echet differentiable\footnote{Symbol~$[\cdot]$ stands
  for the integral part function.} for any $1 < p < \infty$, $p \not\in\N$. 
That is,

\begin{theorem}\label{TJ}
  The function~$H \mapsto \left\| H \right\|_p^p$, $H \in \cS^p$, $1 <
  p < \infty$ is $m$-times Fr\'echet differentiable away from zero, where~$m = [p]$
  and~$p \not \in \N$.
\end{theorem}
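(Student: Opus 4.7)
My plan is to treat $\phi(H):=\|H\|_p^p=\tr f_p(H)$ with $f_p(t):=|t|^p$, and to import the machinery of multiple operator integrals (MOIs) that underlies the recent affirmative resolution of L.~S.~Koplienko's conjecture. First I would reduce to the case of self-adjoint $H$ via the Hermitian dilation $H\mapsto\widetilde H:=\bigl(\begin{smallmatrix}0&H\\H^{*}&0\end{smallmatrix}\bigr)$: this is a bounded $\Rl$-linear embedding of $\cS^p$ into the self-adjoint part of $\cS^p$ with $\|\widetilde H\|_p^p=2\|H\|_p^p$, so $m$-fold Fr\'echet differentiability for the dilated map transfers back to the claim for $\phi$.

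On self-adjoint operators, the natural candidate for the $j$-th Fr\'echet derivative of $\phi$ at $H$, $1\le j\le m$, is the symmetric $j$-linear form implemented by the MOI with symbol the $j$-th divided difference $f_p^{[j]}$, followed by the trace. This yields a formal Taylor expansion
\[
\phi(H+K)=\sum_{j=0}^{m}\tfrac{1}{j!}\,D^{j}\phi(H)[K,\dots,K]+R_m(H,K),
\]
and a telescoping of divided differences rewrites $R_m(H,K)$ as a single MOI with symbol $f_p^{[m]}(x_0,\dots,x_{m-1},x_m)-f_p^{[m]}(x_0,\dots,x_{m-1},x_{m-1})$ applied to $m$ copies of $K$. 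The connection to Koplienko's conjecture flagged in the abstract is exactly that MOIs with symbols in this family are bounded $\cS^{p}\times\cdots\times\cS^{p}\to\cS^{1}$: the same MOI estimates that force existence of the $(m+1)$-st order spectral shift function simultaneously certify boundedness of the candidate derivatives $D^{j}\phi(H)$ as $j$-linear forms on $\cS^p$.

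The main obstacle is to show $R_m(H,K)=o(\|K\|_p^{m})$ in trace norm. The naive spectral shift identity $R_m=\int f_p^{(m+1)}\,d\eta_{m+1}$ is too crude: $f_p^{(m+1)}(\lambda)\sim|\lambda|^{p-m-1}$ blows up at $\lambda=0$ because $p-m-1\in(-1,0)$, while the spectra of $H$ and $H+K$ in $\cS^p$ accumulate precisely there. One must therefore avoid $f_p^{(m+1)}$ pointwise and instead exploit the $(p-m)$-H\"older continuity of $f_p^{(m)}$, which is a \emph{bounded} statement at the level of the divided-difference symbol $f_p^{[m]}(\cdot,x_m)-f_p^{[m]}(\cdot,x_{m-1})$; combined with the MOI boundedness above, this should deliver $\|R_m(H,K)\|_{1}\le C\,\|K\|_p^{m}\,\omega(\|K\|_p)$ with a modulus $\omega(t)\to 0$ as $t\to 0$. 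Continuity of $H\mapsto D^{m}\phi(H)$, needed to complete $m$-fold Fr\'echet differentiability, follows from the same MOI perturbation estimates applied with H\"older-continuous rather than Lipschitz symbols. The true technical difficulty is thus concentrated in these Schatten-class MOI bounds with H\"older symbols, and it is precisely here that the Koplienko-conjecture technology is decisive.
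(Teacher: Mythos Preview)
Your reduction to the self-adjoint case via the Hermitian dilation matches the paper exactly, and your instinct to control the remainder through the $(p-m)$-H\"older continuity of $f_p^{(m)}$ is also the right one. But there is a real gap at the step where you assert that ``MOIs with symbols in this family are bounded $\cS^{p}\times\cdots\times\cS^{p}\to\cS^{1}$.'' The Koplienko-conjecture estimate (Theorem~\ref{PSStheorem}, from \cite{PSS-SSF}) gives only $T_{f_p^{[j]}}^{H}:(\cS^p)^{j}\to\cS^{p/j}$, and since $j\le m<p$ one has $p/j>1$, so the output is \emph{not} trace class. The paper flags precisely this point: with $U_k:=T_{f_p^{[k]}}^H(V,\ldots,V)$, ``it is not known (and not clear) whether~$U_k\in\cS^1$.'' Your candidate forms $\tr T_{f_p^{[j]}}^{H}(K,\ldots,K)$ and your remainder $\tr T_{f_p^{[m]}(\cdot,x_m)-f_p^{[m]}(\cdot,x_{m-1})}(K,\ldots,K)$ are therefore not a priori well-defined, and no amount of symbol-level H\"older control repairs this, because the obstruction is in the Schatten exponent, not in the size of the symbol.

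The paper's circumvention is substantive. One replaces $\tr T_{f_p^{[k]}}^{H}(V,\ldots,V)$ by $\tfrac{1}{k}\tr\bigl(V\cdot T_{\tilde f_p^{[k]}}^{H}(V,\ldots,V)\bigr)$, where $\tilde f^{[k]}:=(f')^{[k-1]}$ is now a $(k-1)$-fold MOI, and then proves $T_{\tilde f_p^{[k]}}^{H}:(\cS^p)^{k-1}\to\cS^{p'}$ (Theorem~\ref{DeltaDefVerified}), so that the outer factor $V\in\cS^p$ pushes the product into $\cS^1$. That $\cS^{p'}$-estimate is not a corollary of Theorem~\ref{PSStheorem}; it genuinely uses $H\in\cS^p$ (not merely bounded) through a complex-interpolation argument over the analytic family $z\mapsto |x|^{z}$, in which one peels $2m$ powers of $H$ out of the symbol to soak up the missing Schatten exponent (Lemma~\ref{DeltaDefVerifiedLemmaAux}). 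The remainder is then written in the same $\tilde f^{[m]}$ form via an integral Taylor expansion, and Theorem~\ref{HolderInFull} with $\alpha=p-m$ places the difference $T_{\tilde f_p^{[m]}}^{H_t,H_0}-T_{\tilde f_p^{[m]}}^{H_0}$ into $\cS^{p'}$; multiplying by the outer $V$ yields the trace-class bound $O(\|V\|_p^{p})$. What your outline is missing is exactly this ``pull one $V$ outside the trace and land the rest in $\cS^{p'}$'' mechanism, together with the interpolation argument that makes it work.
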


This manuscript gives a proof to this conjecture.  Let ${\mathcal H}$ be an
arbitrary complex Hilbert space and let us consider the
Schatten-von Neumann class~$S^p$ associated with ${\mathcal H}$ as a Banach
space over the field of real numbers. We prove (in Theorem~\ref{TaylorExpansion} below)
the following Taylor expansion result (for all relevant definitions and
terminology concerning differentials of abstract functions
we refer to \cite{LS-1961}).

\def\dfun#1{\delta^{(#1)}_H}

\begin{theorem*}
  If~$H \in \cS^p$, $\|H\|_p\leq 1$, $1 < p < \infty$ and if~$m \in \N$ is such
  that~$m <p \leq m + 1$, then there are bounded symmetric polylinear forms
$\dfun{k}$, $1\leq k\leq m$
  \begin{equation*}
    \dfun{1} : \cS^p \mapsto \Rl,\ \
    \dfun{2} : \cS^p \times \cS^p \mapsto \Rl,\ \
    \ldots,\ \
    \dfun{m}: \underbrace{\cS^p \times \ldots \times
      \cS^p}_{\text{$m$-times}} \mapsto \Rl
  \end{equation*}
  such that
  \begin{equation}
    \label{eq:TaylorExpansion}
    \left\| H + V \right\|_p^p - \left\| H \right\|_p^p -
    \sum_{k = 1}^m \dfun{k} \Bigl(\underbrace{V, \ldots,
      V}_{\text{$k$-times}}\Bigr) = O(\left\| V \right\|_p^p),
  \end{equation}
  where~$V \in \cS^p$ and~$\left\| V \right\|_p \rightarrow 0$.
\end{theorem*}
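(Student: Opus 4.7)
The plan is to write $\|H\|_p^p = \tr f(H)$ for $f(t) = |t|^p$ and to deduce the expansion~\eqref{eq:TaylorExpansion} from the differential calculus of multiple operator integrals (MOIs) combined with the recent affirmative resolution of the higher-order Koplienko conjecture. For each $1 \le k \le m$, I would define $\dfun{k}$ as the symmetrisation of the mixed Gateaux derivative
\begin{equation*}
  \frac{\partial^k}{\partial t_1 \cdots \partial t_k}\bigg|_{t_1 = \cdots = t_k = 0} \tr f(H + t_1 V_1 + \cdots + t_k V_k),
\end{equation*}
which, by the Daletskii--Krein--Peller formula applied to smooth symbols approximating $f$, equals $k!$ times the trace of the multiple operator integral $T^{E_H, \ldots, E_H}_{f^{[k]}}(V_1, \ldots, V_k)$ built from the $k$-th divided difference $f^{[k]}$ and the spectral resolution $E_H$ of $H$. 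Since $k \le m < p$, Peller-type $\cS^p$ estimates for such MOIs force each $\dfun{k}$ to extend to a bounded symmetric polylinear form on $\cS^p$, once the singularity of $f^{[k]}$ at the origin is handled by regularisation.

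Next, I would express the remainder
\begin{equation*}
  R_m := \|H + V\|_p^p - \|H\|_p^p - \sum_{k = 1}^m \dfun{k}(V, \ldots, V)
\end{equation*}
via the higher-order spectral shift formula---the affirmative resolution of L.~S.~Koplienko's conjecture for $\cS^{m+1}$-perturbations---which realises $R_m$ as the trace of an MOI whose kernel is an increment of $f^{(m)}$ rather than $f^{(m)}$ itself. Since $m < p \le m + 1$, the $m$-th derivative $f^{(m)}(t) = c_{p,m}\,|t|^{p-m}\,\sgn(t)^m$ is H\"older continuous of exponent $\alpha := p - m \in (0, 1]$, and the scalar Taylor remainder of $|s + t|^p$ past order $m$ is $O(|t|^{m + \alpha}) = O(|t|^p)$. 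The corresponding non-commutative bound, obtained from Schatten-ideal MOI estimates with H\"older-class kernels, yields $|R_m| \le C\,\|V\|_p^p$, which is exactly~\eqref{eq:TaylorExpansion}.

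The main obstacle is managing the singularity of $f$ and its derivatives at $t = 0$ in the non-commutative setting, because the spectra of $H$ and of $H + V$ generally accumulate at $0$, and the MOI estimates underlying both the forms $\dfun{k}$ and the remainder $R_m$ must survive that accumulation. The strategy is to approximate $f$ by smooth symbols $f_\varepsilon$ supported away from zero, to establish \emph{uniform} $\cS^p$ MOI bounds for the approximants via the Koplienko--Skripka--Sukochev higher-order trace formula, and then to pass to the limit $\varepsilon \to 0$. The H\"older exponent $\alpha = p - m$ matches precisely the non-smoothness of $|t|^p$ at the origin, and this matching is what allows the $|t|^{p-m}$ loss to be absorbed into the $\|V\|_p^p$ bound for the remainder---this is the connection between Theorem~\ref{TJ} and Koplienko's conjecture foreshadowed in the abstract.
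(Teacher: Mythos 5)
Your high-level blueprint (write $\|H\|_p^p=\tr f_p(H)$, use divided differences and multiple operator integrals, exploit the H\"older exponent $\alpha=p-m$, and lean on the machinery behind the resolution of Koplienko's conjecture) matches the paper, but the proposal assumes away the two steps that constitute the actual proof. First, your definition $\dfun{k}(V_1,\ldots,V_k)=\const\cdot\tr\bigl(T^{H}_{f_p^{[k]}}(V_1,\ldots,V_k)\bigr)$ is precisely the ``natural'' suggestion the paper rejects as flawed: the available Peller-type bounds (Theorem~\ref{DDImpl}) only give $T^{H}_{f_p^{[k]}}(V_1,\ldots,V_k)\in\cS^{p/k}$, and for $k\geq 2$ one has $p/k>1$, so this operator need not be trace class and the trace need not exist; no ``regularisation of the singularity at the origin'' repairs this, because the obstruction is a summability (trace-class) issue, not a smoothness issue. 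The paper circumvents it by peeling off one argument, $\tr\bigl(T^{H}_{f^{[k]}}(V,\ldots,V)\bigr)=\frac1k\tr\bigl(V\,T^{H}_{\tilde f^{[k]}}(V,\ldots,V)\bigr)$ with the reduced-order momentum $\tilde f^{[k]}$ of \eqref{DDtilde}, and then proving the duality estimate $\bigl\|T^{H}_{\tilde f^{[k]}_p}(V_1,\ldots,V_{k-1})\bigr\|_{p'}\leq\const\|H\|_p^{p-k}\prod_j\|V_j\|_p$ (Theorem~\ref{DeltaDefVerified}); that estimate is the technical heart of the paper and requires the complex-interpolation argument of Lemmas~\ref{CIlemmaReplacement} and~\ref{DeltaDefVerifiedLemmaAux}, which exploits the analytic family $|x|^z$, the absorption of powers of $H$ via \eqref{TphiHomomorphismI}, and the $\|h\|_\infty$-bound of Theorem~\ref{PSStheorem}. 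Nothing in the existing literature ``forces'' boundedness of your form on $(\cS^p)^k$, so this is a genuine gap, not a routine verification.

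Second, your treatment of the remainder invokes the higher-order spectral shift (trace) formula itself, whereas what is actually needed -- and what the paper uses from the Koplienko circle of ideas -- is the operator-norm estimate $\|\hat T_{\phi_{m,h,Q}}\|_{\tilde p\to p}\leq\const\|h\|_\infty$ together with \emph{new} H\"older-type bounds for the difference $T^{A,\tilde H}_\phi-T^{B,\tilde H}_\phi$ in one operator variable (Theorems~\ref{HolderInWeak} and~\ref{HolderInFull}, proved via a Littlewood--Paley/Besov decomposition, the perturbation formula of Theorem~\ref{PertubationFormula}, and real interpolation). The spectral shift formula by itself would control the remainder in terms of $\|V\|_{m+1}^{m+1}$, not $\|V\|_p^p$; the passage from the scalar estimate $O(|t|^{m+\alpha})$ to the non-commutative bound $O(\|V\|_p^p)$ is exactly the content of these H\"older MOI estimates (previously known only for $m=0$ from Aleksandrov--Peller), which your proposal cites as if already available. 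You would also need an exact Taylor formula with integral remainder such as \eqref{TaylorExpansionFinal}, which rests on the perturbation formula for polynomial integral momenta; this step is absent from the proposal.
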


Taking into account that for an even $p\in \N$ the norm of ~$S^p$ is obviously
infinitely many times differentiable, we indeed confirm the conjecture
that the norm of ~$S^p$ and that of their classical counterpart
$\ell^p$ share the same differentiability properties. For the proofs of
corresponding commutative results see \cite{BF-1966} and \cite{S-1967}.

Our techniques are based on a new approach to and results from the theory of multiple operator integration presented in \cite{PSS-SSF}. In that paper the authors 
(jointly with A. Skripka) applied that theory to resolve L.S. Koplienko's conjecture that a spectral shift function exists for every integral $p>2$. 
Suitably enhanced and strengthened technical estimates from \cite{PSS-SSF} and its companion paper \cite{PSS-SSF2} are crucially used in the proofs below. 

Finally, we mention that a closely related problem concerning differentiability properties of the norm of general non-commutative $L_p$-spaces 
was also stated in \cite{PX}. Our methods allow a further extension to cover also a case of $L_p$-spaces associated with an arbitrary  semifinite von Neumann algebra 
$\mathcal M$. 
This extension will be presented in a separate article, however in the last section we resolve this problem for a special case when von Neumann algebra 
$\mathcal M$ is of type $I$.

\section{Multiple operator integrals}
\label{sec:moi}

The proof of Theorem~\ref{TaylorExpansion} is based on methods drawn from the
theory of multiple operator integrals.  A brief account of that theory is given
below together with some new results.

\subsection*{Multiple operator integrals from \cite{Peller2006} and \cite{ACDS}}

Let~$\cA_m$ be the class of functions~$\phi: \Rl^{m + 1} \mapsto \Cx$
admitting the representation
\begin{equation}
  \label{Arep}
  \phi (x_0, \ldots, x_m) =
  \int_\Omega \prod_{j = 0}^m a_j (x_j, \omega)\, d\mu(\omega),
\end{equation}
for some finite measure space~$\left( \Omega, \mu \right)$ and
bounded Borel functions $$ a_{j} \left( \cdot, \omega \right) :
\Rl \mapsto \Cx. $$ The class~$\cA_m$ is in fact an algebra with respect to the operations of pointwise addition and multiplication 
\cite[Proposition 4.10]{ACDS}. The formula
$$ \left\| \phi
\right\|_{\cA_m} = \inf \int_\Omega \prod_{j = 0}^m \left\|
a_j(\cdot, \omega) \right\|_\infty \, d\left| \mu \right|(\omega),
$$ where the infimum is taken over all possible
representations~(\ref{Arep}) defines a norm on ~$\cA_m$ (see \cite{PS-DiffP}).

For every~$\phi \in \cA_m$, and a fixed $(m +
1)$-tuple of self-adjoint operators~$\tilde H :=
\left( H_0, \ldots, H_{m} \right)$, the multiple operator integral
\begin{equation}
  \label{MOIdef}
  T_\phi : \cS^{p_1} \times \ldots \times
  \cS^{p_m} \mapsto \cS^p,\ \ \text{where}\ \frac 1p = \frac 1{p_1} +
  \ldots + \frac 1{p_m}.
\end{equation}
is defined as follows
\begin{multline*}
  T_\phi \left(
    V_1, \ldots, V_m \right) = \int_\Omega a_0(H_0, \omega)\, V_1 \, a_1(H_1,
  \omega)\, \cdot \ldots \cdot V_m a_m(H_m, \omega)\, d\mu(\omega),\\ V_j \in
  \cS^{p_j},\ \ j = 1,\ldots,m.
\end{multline*}
Here~$a_j$'s and~$(\Omega, \mu)$ are taken from the representation~(\ref{Arep})
and one of the  main results of this theory is that the value $T_\phi \left(
    V_1, \ldots, V_m \right)$ does not depend on that representation \cite[Lemma
3.1]{Peller2006}, \cite[Lemma 4.3]{ACDS}.
 If it is necessary to specify the $(m+1)$-tuple $\tilde H$ used in the
definition of the multiple operator integral $T_\phi$,
we write $T_\phi^{\tilde H}$. Observe that~$T_\phi$ is a multilinear
operator and, if~$1 \leq p \leq \infty$, $T_\phi$ is bounded, i.e.,
\begin{equation}
  \label{TphiBasicEst}
  \left\| T_\phi \right\|_{\tilde{p}\to p} \leq \left\| \phi \right\|_{\cA_m},
\end{equation}
where the norm~$\left\| T_\phi \right\|_{\tilde{p}\to p}$ is the
norm of multilinear operator, that is $$ \left\| T_\phi
\right\|_{\tilde{p}\to p} := \sup \left\| T_\phi(V_1,
  \ldots, V_m) \right\|_p, $$ where $\tilde{p}=(p_1,\ldots ,p_m)$ and the supremum
is taken over all
$m$-tuples~$(V_1, \ldots, V_m)$ such that~$\left\| V_j
\right\|_{p_j} \leq 1$, $j = 1, \ldots, m$. The proof of this
assertion follows along the same line of thought as in \cite[Lemma
4.6]{ACDS} (see also \cite[Section 4.1]{AlPe-Sp-2010} and
\cite[Lemma 3.5]{PSS-SSF}).

The transformation~$T_\phi$ with~$\phi \in \cA_m$ defined above
has the following simple algebraic property \cite[Proposition 4.10(ii)]{ACDS}.  If~$b_j$, $j = 0,
\ldots, m$ are bounded Borel functions, then~$\psi \in \cA_m$,
where $$ \psi(x_0, \ldots, x_m) = b_0(x_0)\cdot \ldots \cdot b_n
(x_m)\, \phi(x_0, \ldots, x_m) $$ and
\begin{equation}
  \label{TphiHomomorphism}
  T_\psi(V_1, \ldots, V_m) =
  T_\phi(V_1', \ldots, V_m')
\end{equation}
where $$ V_1' = b_0(H_0)\, V_1 b_1(H_1) \ \ \text{and}\ \ V_j' =
V_j \, b_j(H_j),\ \ j = 2, \ldots, m. $$  In particular,
if~ $\psi(x_0, \ldots, x_m) = x_0^{s_0} \cdot \ldots \cdot x_m^{s_m}\, \phi(x_0,
\ldots, x_m)$, where $s_0,\dots, s_m$ are non-negative integers and $\tilde{H}$ consists of bounded operators,
then
\begin{equation}
  \label{TphiHomomorphismI}
  T_\psi(V_1, \ldots, V_m) = T_\phi(H_0^{s_0}V_1\, H_1^{s_1}, V_2 H_2^{s_2},
  \ldots, V_m \, H_m^{s_m}).
\end{equation}

\subsection*{A version of multiple operator
integrals from \cite{PSS-SSF}}

We shall also need a closely related but distinct version of operators $T_\phi$
introduced recently in \cite{PSS-SSF}.

Let $m\in\mathbb N$.  Let~$dE^j_\lambda$, $\lambda \in \Rl$ be the
spectral measure corresponding to the self-adjoint operator $H_j$ from
the $(m+1)$-tuple $\tilde H$.  We set~$E_{l,n}^j = E^j \left[ \frac
  l n, \frac {l + 1}n \right)$, for every~$n \in \N$ and~$l \in \Z$,
where $E^j[a,b)$ is the spectral projection of the operator $H_j$
corresponding to the semi-interval $[a,b)$.

Let $1\le p_j\le\infty$, with $1\le j\le m,$ be such that
$0\le\frac{1}{p_1}+\ldots +\frac{1}{p_m}\le 1.$ Let $V_j\in \mathcal
S^{p_j}$ and denote $\tilde V = \left( V_1, \ldots, V_{m} \right).$
Fix a bounded Borel function $\phi: \mathbb R^{m+1}\mapsto\mathbb C.$
Suppose that for every $n\in\mathbb N$ the series $$S_{\phi,n}(\tilde
V):=\sum\limits_{l_0,\ldots ,l_m\in\mathbb
  Z}\phi\left(\frac{l_0}{n},\ldots ,\frac{l_m}{n}\right)E^0_{l_0,n}V_1E^1_{l_1,n}
V_2\cdot\ldots \cdot V_nE^m_{l_m,n}$$ converges in the norm of $\mathcal
S^p,$ where $\frac1p=\frac1{p_1}+\ldots +\frac1{p_m}$ and $$\tilde
V\mapsto S_{\phi,n}(\tilde V),\quad n\in\mathbb N,$$ is a sequence of
bounded multilinear operators $\mathcal S^{p_1}\times\ldots \times\mathcal
S^{p_m}\mapsto \mathcal S^{p}.$ If the sequence of operators
$\{S_{\phi,n}\}_{n\ge 1}$ converges strongly to some multilinear
operator $\hat T_\phi,$ then, according to the Banach-Steinhaus
theorem, $\{S_{\phi,n}\}_{n\ge 1}$ is uniformly bounded and the
operator $\hat T_\phi$ is also bounded. In this case the operator
$\hat T_\phi$ is called a modified multiple operator integral.  If it
is necessary to specify the $(m+1)$-tuple $\tilde H$ used in the
definition of the multiple operator integral $\hat T_\phi$, we write
$\hat T_\phi^{\tilde H}$.

Let~$\cC_m$ be the class of functions~$\phi: \Rl^{m + 1} \mapsto \Cx$
admitting the representation \eqref{Arep} with bounded
continuous functions $$ a_{j} \left( \cdot, s \right) : \Rl \mapsto
\Cx $$ for which there is a growing sequence of measurable
subsets~$\left\{\Omega^{(k)} \right\}_{k \geq 1}$, with $\Omega^{(k)}  \subseteq
\Omega$ and~$\cup_{k \geq 1} \Omega^{(k)}  = \Omega$ such that the
families $$ \left\{a_{j} (\cdot, s) \right\}_{s \in \Omega^{(k)} },\ \ 0
\leq j \leq m, $$ are uniformly bounded and uniformly equicontinuous.  The
class~$\cC_m$ has the norm $$ \left\| \phi \right\|_{\cC_m} = \inf
\int_\Omega \prod_{j = 0}^m \left\| a_j(\cdot, s) \right\|_\infty \,
d\left| \mu \right|(s), $$ where the infimum is taken over all
possible representations~\eqref{Arep} as specified above. Hence, we have
\begin{equation}
  \label{AnlessthanCn}
 \left\| \phi\right\|_{\cA_m}\leq \left\| \phi \right\|_{\cC_m},\ \forall \phi
\in \cC_m.
\end{equation}

The following lemma demonstrates a connection between two types of
operator integrals $\hat T_\phi$ and $T_\phi$.

\begin{lemma}[{\cite[Lemma 3.5]{PSS-SSF}}]\label{lemma1}
Let $1\le p_j\le \infty,$ with $1\le j\le m,$ be such that
$0\le\frac{1}{p_1}+\ldots +\frac{1}{p_m}\le 1.$ For every $\phi
\in\cC_m$, the operator $\hat T_\phi$ exists and is bounded on
$\mathcal S^{p_1}\times\ldots \times\mathcal S^{p_m},$ with
\begin{equation}
  \label{hatTphiBasicEst}
\|\hat T_\phi\|_{\tilde p\to p}\le\|\phi\|_{\cC_m}.
\end{equation}
Moreover,  $\hat T_\phi=T_\phi$.
\end{lemma}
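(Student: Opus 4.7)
Fix a representation $\phi(x_0,\ldots,x_m) = \int_\Omega \prod_{j=0}^m a_j(x_j,\omega)\,d\mu(\omega)$ of $\phi \in \cC_m$ and an exhausting sequence $\{\Omega^{(k)}\}_{k\ge 1}$ on which each family $\{a_j(\cdot,\omega)\}_{\omega \in \Omega^{(k)}}$ is uniformly bounded and uniformly equicontinuous. The strategy is to rewrite each partial sum $S_{\phi,n}(\tilde V)$ as an operator-valued $\mu$-integral of a product that mimics the integrand defining $T_\phi(\tilde V)$, but with step-function approximants in place of the continuous amplitudes $a_j(\cdot,\omega)$, and then let $n \to \infty$.

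Concretely, introduce $a_j^{(n)}(x,\omega) := \sum_{l\in\Z} a_j(l/n,\omega)\,\chi_{[l/n,(l+1)/n)}(x)$, so that $a_j^{(n)}(H_j,\omega) = \sum_l a_j(l/n,\omega)\, E^j_{l,n}$ (the series converging strongly). Substituting the representation of $\phi$ into the definition of $S_{\phi,n}$ and swapping the spectral sum with the $\mu$-integral --- legitimate because the $N$-term truncations converge in $\cS^p$ with an $\omega$-uniform majorant $\prod_{j=0}^m \|a_j(\cdot,\omega)\|_\infty \cdot \prod_{j=1}^m \|V_j\|_{p_j}$ that is $\mu$-integrable by the choice of representation --- yields
\[
  S_{\phi,n}(\tilde V) = \int_\Omega a_0^{(n)}(H_0,\omega)\, V_1\, a_1^{(n)}(H_1,\omega)\, V_2 \cdots V_m\, a_m^{(n)}(H_m,\omega)\, d\mu(\omega).
\]
H\"older's inequality in $\cS^p$ applied under the integral immediately gives the uniform bound $\|S_{\phi,n}(\tilde V)\|_p \le \|\phi\|_{\cC_m} \prod_{j=1}^m \|V_j\|_{p_j}$.

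Next, compare this with $T_\phi(\tilde V) = \int_\Omega a_0(H_0,\omega)\, V_1 \cdots V_m\, a_m(H_m,\omega)\, d\mu(\omega)$. For $\omega \in \Omega^{(k)}$, uniform equicontinuity forces $\|a_j^{(n)}(\cdot,\omega) - a_j(\cdot,\omega)\|_\infty \to 0$ as $n \to \infty$, \emph{uniformly in $\omega$}, hence $a_j^{(n)}(H_j,\omega) \to a_j(H_j,\omega)$ in operator norm, uniformly on $\Omega^{(k)}$. Expanding the difference of the two $(m+1)$-fold products telescopically and applying H\"older, the integrand converges to its $T_\phi$-counterpart in $\cS^p$, uniformly in $\omega \in \Omega^{(k)}$; so the integrals over $\Omega^{(k)}$ converge in $\cS^p$. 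The complementary pieces over $\Omega \setminus \Omega^{(k)}$ are bounded in $\cS^p$ by $\prod_j \|V_j\|_{p_j} \cdot \int_{\Omega\setminus\Omega^{(k)}} \prod_j \|a_j(\cdot,\omega)\|_\infty\,d|\mu|(\omega)$, which vanishes as $k \to \infty$ uniformly in $n$. A standard $\varepsilon/3$ splitting then gives $S_{\phi,n}(\tilde V) \to T_\phi(\tilde V)$ in $\cS^p$, so $\hat T_\phi$ exists, equals $T_\phi$, and inherits the norm bound.

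The main obstacle is the interchange of the $n \to \infty$ limit with the $\mu$-integral. Pointwise in $\omega$, the approximants $a_j^{(n)}(H_j,\omega)$ converge only strongly to $a_j(H_j,\omega)$, which is generally too weak to push through a product containing the compact factors $V_j$ and to provide a dominated-convergence majorant. The hypotheses defining $\cC_m$ --- an exhausting sequence on which the amplitudes are uniformly bounded and uniformly equicontinuous --- are engineered precisely to upgrade strong convergence to \emph{uniform} operator-norm convergence on each $\Omega^{(k)}$, and this upgrade, combined with the $\mu$-integrable majorant supplied by the $\cC_m$-norm, is what makes the split-into-$\Omega^{(k)}$-plus-tail argument go through.
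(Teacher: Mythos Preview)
The paper does not supply its own proof of this lemma: it is cited directly as \cite[Lemma~3.5]{PSS-SSF}, with only the remark afterward that the additional hypothesis $\tilde H=(H,\ldots,H)$ present in the original statement is inessential. Your argument is correct and is the natural proof of the result: rewrite $S_{\phi,n}(\tilde V)$ as a $\mu$-integral of products of the step-function approximants $a_j^{(n)}(H_j,\omega)$, use the uniform equicontinuity on each $\Omega^{(k)}$ to obtain $\|a_j^{(n)}(\cdot,\omega)-a_j(\cdot,\omega)\|_\infty\to 0$ uniformly in $\omega\in\Omega^{(k)}$ (hence operator-norm convergence of $a_j^{(n)}(H_j,\omega)$ to $a_j(H_j,\omega)$, uniformly on $\Omega^{(k)}$), and pass to the limit by an $\varepsilon/3$ splitting against the $\mu$-integrable majorant coming from the $\cC_m$-representation. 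Your closing paragraph also correctly isolates why the class $\cC_m$ is needed here rather than the larger $\cA_m$: without the equicontinuity hypothesis the approximants $a_j^{(n)}(H_j,\omega)$ converge only strongly, which is not enough to push the limit through the product and the $\mu$-integral.
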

The result above is stated in \cite{PSS-SSF} under the additional assumption
that
$$\tilde H=(H,H,\dots, H),$$
however, it is straightforward to see that the latter restriction is redundant.

It is important to observe that the class of functions to which the definition
from \cite{Peller2006} and \cite{ACDS} is applicable is distinct
from the class of functions for which the definition from \cite{PSS-SSF} makes
sense. 
Observe also that the algebraic relations~(\ref{TphiHomomorphism})
and~(\ref{TphiHomomorphismI}) continue to hold for the modified
operators~$\hat T_\phi$ (see~\cite[Lemma~3.2]{PSS-SSF}).

\subsection*{Besov spaces}

For the function $f\in L_1$ by $\hat{f}$ we denote
its Fourier transform, i. e.,
$$
\hat{f}(t)=\int\limits_{\mathbb R}f(x)e^{-ixt}dx.
$$
We shall also sometimes use the same symbol for Fourier transform of a tempered distribution.

For a given $s\in \Rl$, the homogeneous Besov space~$\dot B_{\infty 1}^s$ is the
collection of all generalized functions on~$\Rl$ satisfying the
inequality $$ \left\| f \right\|_{\dot B^s_{\infty 1}} := \sum_{n
\in \Z} 2^{sn} \left\| f * W_n \right\|_\infty < + \infty, $$
where
\begin{equation}
  \label{WnFuncs}
  W_n (x) = 2^n
  W_0 (2^n x),\ \ x \in \Rl,\ \ n \in \Z
\end{equation}
and~$W_0$ is a smooth
function whose Fourier transform is like on fig.~\ref{fig:w0}.  We
also require that\footnote{This condition ensures that $$ \sum_{n \in
    \Z} \hat W_n (x) = 1,\ \ x \neq 0. $$} $$ \hat W_0 (y) + \hat W_0
(\frac y2) = 1,\ \ 1 \leq y \leq 2. $$

\begin{figure}[h]
  \centering
  \includegraphics{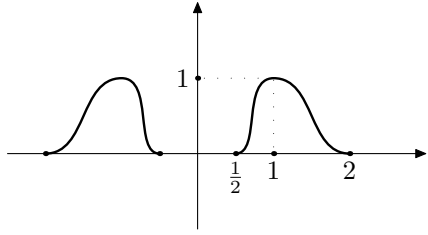}
  \caption{The Fourier transform~$\hat W_0$}
  \label{fig:w0}
\end{figure}

Observe that if~$f$ is a polynomial, then its Fourier transform is
supported at~$x = 0$, and since the functions $W_n$ are not supported at $0$, we have~$\left\| f \right\|_{\dot B^s_{\infty 1}} =
0$.  The modified homogeneous Besov class~$\tilde B^s_{\infty 1}$ is
given by $$ \tilde B^s_{\infty 1} = \left\{f \in \dot B^s_{\infty 1},\
  \ f^{([s])} \in L^\infty:=L_\infty(\Rl) \right\}, $$ where~$[s]$ is the integral
part of~$s$.  The value of the norm $\left\| \cdot\right\|_{\dot
B^s_{\infty 1}}$ on the elements of $\tilde B^s_{\infty 1}$ will
 be denoted by $\left\| \cdot\right\|_{\tilde B^s_{\infty
1}}$.  Note also that the norms $\|f\|_{\tilde
B^s_{\infty 1}}$ and $\|f^{([s])}\|_{\tilde B^{s-[s]}_{\infty 1}}$ are
equivalent on the space $\tilde B^s_{\infty 1}.$ Indeed, this equivalence may be
easily inferred from
\cite[(36)]{Triebel-1982} (all what one needs to recall is that the Poisson
integral used in \cite[(36)]{Triebel-1982} commutes with the
 differentiation, that is $P(t)f^{(k)} = ( P(t)f )^{(k)}$).

The elements of~$\tilde
B^s_{\infty 1}$ can also be described as follows
\begin{multline*}
  f \in \tilde B^s_{\infty 1}\ \ \Longleftrightarrow\ \ f(x) = c_0 +
  c_1 x + \ldots + c_m x^m + f_0(x),\\ c_j \in \Cx,\ j = 0,\ldots,
  m,\ f_0 \in \dot B^s_{\infty 1},\ \supp \hat f_0 \subseteq \Rl
  \setminus \{0\}.
\end{multline*}

Recall that~$\Lambda_\alpha$ is the class of all H\"older
functions of exponent~$0 < \alpha < 1$, that is the functions
$f:\mathbb R\to \mathbb C$ such that
$$\|f\|_{\Lambda_\alpha}:=\sup\limits_{t_1,t_2}\frac{|f(t_1)-f(t_2)|}{
|t_1-t_2|^\alpha}<+\infty.$$

We also need the following simple criterion.
\begin{lemma}
  \label{HolderToBesov}
  If\/~$f^{(m-1)} \in \Lambda_{1-\epsilon}$ and\/~$f^{(m)} \in
  \Lambda_\epsilon$, for\/~$0 < \epsilon < 1$ for some~$m \in \N$,
  then~$f \in \tilde B^m_{\infty 1}$.
\end{lemma}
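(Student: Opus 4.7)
The plan is to estimate $\left\|f*W_n\right\|_\infty$ for each dyadic block and show that $\sum_{n\in\Z}2^{mn}\left\|f*W_n\right\|_\infty<\infty$. The key observation is that $\hat W_0$ is supported away from the origin, so $\hat W_n$ vanishes in a neighborhood of $0$, and therefore
\[
\int_\Rl y^k\,W_n(y)\,dy=0\qquad\text{for every non-negative integer }k.
\]
Consequently, for any polynomial $P$ and any $x\in\Rl$,
\[
(f*W_n)(x)=\int_\Rl\bigl(f(x-y)-P(y)\bigr)\,W_n(y)\,dy,
\]
which means we are free to subtract a Taylor polynomial of $f$ before estimating.

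For the high-frequency blocks $n\ge 0$, I take $P$ to be the Taylor polynomial of $f$ at $x$ of order $m$. Taylor's theorem with integral remainder together with $f^{(m)}\in\Lambda_\epsilon$ yields
\[
\bigl|f(x-y)-P(y)\bigr|\le C\,\|f^{(m)}\|_{\Lambda_\epsilon}\,|y|^{m+\epsilon}.
\]
Using the self-similarity $W_n(y)=2^n W_0(2^n y)$, a change of variables gives $\|f*W_n\|_\infty\le C\,2^{-n(m+\epsilon)}$, and hence $2^{mn}\|f*W_n\|_\infty\le C\,2^{-n\epsilon}$, which is summable over $n\ge 0$. For the low-frequency blocks $n<0$ that same bound is useless, so instead I subtract the Taylor polynomial of order $m-1$; the hypothesis $f^{(m-1)}\in\Lambda_{1-\epsilon}$ now produces
\[
\bigl|f(x-y)-P(y)\bigr|\le C\,\|f^{(m-1)}\|_{\Lambda_{1-\epsilon}}\,|y|^{m-\epsilon},
\]
giving $\|f*W_n\|_\infty\le C\,2^{-n(m-\epsilon)}$ and hence $2^{mn}\|f*W_n\|_\infty\le C\,2^{n\epsilon}$, which is summable over $n<0$. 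Adding the two tails yields
\[
\|f\|_{\dot B^m_{\infty 1}}=\sum_{n\in\Z}2^{mn}\|f*W_n\|_\infty\le C\sum_{n\ge 0}2^{-n\epsilon}+C\sum_{n<0}2^{n\epsilon}<+\infty,
\]
so $f\in\dot B^m_{\infty 1}$.

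It remains to verify that $f^{(m)}\in L^\infty$ in order to conclude $f\in\tilde B^m_{\infty 1}$. Since each $f*W_n$ has Fourier support in a dyadic annulus of scale $2^n$, Bernstein's inequality produces $\|(f*W_n)^{(m)}\|_\infty\le C\,2^{mn}\|f*W_n\|_\infty$, and the same dichotomy used above shows that $\sum_{n\in\Z}\|(f*W_n)^{(m)}\|_\infty<\infty$. Thus the Littlewood--Paley series for $f^{(m)}$ converges in $L^\infty$, so that $f^{(m)}$ differs from a bounded function only by a polynomial of degree at most $m-1$ coming from frequency $0$; combined with $f^{(m)}\in\Lambda_\epsilon$ (which forbids any non-constant polynomial component) this forces $f^{(m)}\in L^\infty$, as required.

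The main obstacle is choosing the right order of Taylor expansion for each regime of $n$: a single expansion gives a bound that converges at one end of the dyadic scale and diverges at the other. The fact that the hypotheses provide Hölder control at two consecutive orders $m-1$ and $m$, with exponents summing to $1$, is exactly what is needed to make the two tails balance and converge simultaneously.
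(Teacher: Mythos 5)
Your argument is correct. You establish exactly what the definition of $\tilde B^m_{\infty 1}$ requires: the vanishing moments of $W_n$ (from $\hat W_n\equiv 0$ near the origin) let you subtract a Taylor polynomial, the order-$m$ expansion with $f^{(m)}\in\Lambda_\epsilon$ gives $2^{mn}\|f*W_n\|_\infty\lesssim 2^{-n\epsilon}$ for $n\ge 0$, the order-$(m-1)$ expansion with $f^{(m-1)}\in\Lambda_{1-\epsilon}$ gives $\lesssim 2^{n\epsilon}$ for $n<0$, and the Bernstein-plus-growth argument correctly settles the membership $f^{(m)}\in L^\infty$, which is genuinely needed since a $\Lambda_\epsilon$ function need not be bounded (the polynomial ambiguity at frequency zero is killed by the sublinear growth of $f^{(m)}$, and the surviving constant is harmless).

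The comparison with the paper: the paper does not give a self-contained proof at all — it refers to the case $m=1$ in the proof of Theorem~4 of \cite{PoSuNGapps} and asserts that the general case is identical. That argument rests on the classical dyadic characterization of H\"older--Zygmund classes, i.e.\ the estimate $2^{\alpha n}\|g*W_n\|_\infty\leq \const\,\|g\|_{\Lambda_\alpha}$ (the same inequality the paper later invokes as~(\ref{HolderHarmonicDef}) with references to Stein and Triebel), applied to $f^{(m)}$ for the high frequencies and, after trading one derivative for a factor $2^n$, to $f^{(m-1)}$ for the low frequencies; the two tails balance exactly because the exponents $\epsilon$ and $1-\epsilon$ sit at consecutive orders of differentiation. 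Your proof implements the same two-regime splitting of the dyadic sum, but instead of citing that characterization you re-derive the needed block estimates from scratch via moment cancellation and Taylor's theorem with integral remainder. What this buys is a fully self-contained and elementary proof (and an explicit treatment of the $f^{(m)}\in L^\infty$ clause, which the citation leaves implicit); what the paper's route buys is brevity, since the block estimate for H\"older functions is standard. One cosmetic remark: your claim that the frequency-zero polynomial has degree at most $m-1$ is neither needed nor justified as stated, but your growth argument disposes of all non-constant polynomials anyway, so nothing is lost.
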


The case~$m = 1$ is proved in~\cite[proof of Theorem
4]{PoSuNGapps}, the proof of the general case is identical to the
case~$m = 1$. We leave details to the reader.

\subsection*{Polynomial integral momenta.}
\label{sec:IPM}

Let $\Pl_m$ be the class of polynomials of~$m$ variables with real
coefficients.  Let~$S_m$ be the simplex $$ S_m := \left\{ \left( s_0,
    \ldots, s_m \right) \in \Rl^{m + 1}:\ \ \sum_{j = 0}^m s_j = 1,\ \
  s_j \geq 0,\ \ j = 0, \ldots, m \right\} $$ and let
\begin{equation*}
  \label{Rn}
  R_m := \left\{(s_1, \ldots, s_{m})
    \in \Rl^m:\ \ \sum_{j = 1}^{m} s_j \leq 1,\ \ s_j \geq 0,\ \ j=1,\ldots, m\right\}.
\end{equation*}
We equip the
simplex~$S_m$ with the finite measure~$d\sigma_m$ defined by the requirement
that the equality
\begin{equation}
\label{minv} \int_{S_m} \phi(s_0, \ldots, s_m)\, d\sigma_m =
\int_{R_m} \phi\left( 1 - \sum_{j =
    1}^{m}s_j,\, s_1, \ldots, s_{m}  \right)\, dv_m,
\end{equation}
holds for every continuous function~$\phi: \Rl^{m+1} \mapsto \Cx$, where
~$dv_m$ is the Lebesgue measure on~$\Rl^m$.  It can be seen via a
straightforward change of variables in~(\ref{minv}) that the
measure~$d\sigma_m$ is invariant under any permutation of the
variables~$s_0, \ldots, s_m$.

Let $ \tilde s = \left( s_1, \ldots, s_m \right) \in R_m$ and let $(s_0,
\tilde s) \in S_m$, that is $ s_0=1-\sum_{j=1}^m s_j.$ Given $h \in
L^\infty$, and $Q \in \Pl_m$, we set
\begin{equation}
\label{2stars}
\phi_{m, h, Q} \left( \tilde x \right) = \int_{S_m} Q\left(
  \tilde s \right)\, h \left( \sum_{j = 0}^m s_j x_j \right)
\, d\sigma_m,
\end{equation}
where $\tilde x = \left( x_0, \ldots, x_m \right) \in \Rl^{m+1}$.
Following the terminology set in \cite{PSS-SSF} we shall call the
function~$\phi_{m, h, Q}$ a {\it
  polynomial integral momentum}. This notion plays a crucial role in our present
approach. Indeed, the functions
~$\phi: \Rl^{m+1} \mapsto \Cx$ for which we shall be considering multiple
operator integrals $T_\phi$ and $\hat T_\phi$ are in fact
of the form ~$\phi_{m, h, Q}$ for suitable choice of $h$ and $Q$.

\subsection*{Multiple operator integral of a polynomial integral
  momentum }
\label{moi-of-pim}
In this subsection, we describe the connection between the norm $
    \left\| \phi_{m, h, Q} \right\|_{\cC_m}$ and a norm of the function $h$ and
thereby connect the latter with the norm
$ \left\| T_\phi \right\|_{\tilde{p}\to p}$ (see \eqref{AnlessthanCn} and \eqref{TphiBasicEst}).
The following result extends~\cite[Theorem~5.1]{Peller2006}.  It also
improves~\cite[Lemma~5.2]{PSS-SSF}.

\begin{theorem}
  \label{PIMpolyEst}
  Let~$\phi = \phi_{m, h, Q}$ be a polynomial integral momentum.
  \begin{enumerate}
  \item If\/~$\supp \hat h \subseteq [2^{N-1}, { 2^{N+1}}]$, for some~$N \in
    \Z$, then~$\phi \in \cC_m$ and $$ \left\| \phi \right\|_{\cC_m}
    \leq \const \left\| h \right\|_\infty. $$\label{PIMpolyEst:Exponential}
  \item If\/~$h \in \tilde B^0_{\infty 1}$, then~$\phi \in \cC_m$ and $$
    \left\| \phi \right\|_{\cC_m} \leq \const \left\|
      h\right\|_{\tilde B^0_{\infty 1}}. $$\label{PIMpolyEst:Besov}
  \end{enumerate}
The constants above do not depend on $N$ or $h$.
\end{theorem}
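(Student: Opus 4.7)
Plan of proof. I will realise $\phi_{m,h,Q}$ as an integral of the form $\int_{\Omega}\prod_{j=0}^{m}a_{j}(x_{j},\omega)\,d\mu(\omega)$ on the parameter space $\Omega=\mathbb R\times S_{m}$ with pure exponential atoms $a_{j}(x_{j},(t,\tilde s))=e^{its_{j}x_{j}}$. On the exhaustion $\Omega^{(k)}=\{|t|\le k\}\times S_{m}$ the family $\{a_{j}(\cdot,\omega):\omega\in\Omega^{(k)}\}$ is uniformly bounded by $1$ and uniformly equicontinuous via $|e^{its_{j}x_{j}}-e^{its_{j}x_{j}'}|\le k|x_{j}-x_{j}'|$, so the $\cC_{m}$ continuity axioms come for free; all the work consists in producing a finite complex measure $\mu$ with controlled total variation.

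For claim~(i) I first reduce to $N=0$ by the dilation $\tilde x\mapsto 2^{-N}\tilde x$, which preserves the $\cC_{m}$-norm of a polynomial integral momentum (as one checks from the definition) and moves $\supp\hat h$ to $[1/2,2]$. Fix a Schwartz $\eta$ with $\hat\eta\in C_{c}^{\infty}$, $\hat\eta\equiv 1$ on $[1/2,2]$ and supported in $[1/4,4]$, so $h=h\ast\eta$. Using $\sum_{j}s_{j}=1$ one has
\begin{equation*}
h\Bigl(\sum_{j}s_{j}x_{j}\Bigr)=\int_{\mathbb R}h(y)\,\eta\Bigl(\sum_{j}s_{j}(x_{j}-y)\Bigr)\,dy,
\end{equation*}
and substituting $\eta(z)=(2\pi)^{-1}\int\hat\eta(t)e^{izt}dt$ together with $e^{it\sum s_{j}(x_{j}-y)}=e^{-ity}\prod_{j}e^{its_{j}x_{j}}$ yields the formal representation
\begin{equation*}
\phi_{m,h,Q}(\tilde x)=\frac{1}{2\pi}\int_{\mathbb R\times S_{m}}Q(\tilde s)\,\hat\eta(t)\,\hat h(t)\,\prod_{j}e^{its_{j}x_{j}}\,dt\,d\sigma_{m}.
\end{equation*}

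The principal obstacle is that $\hat h$ is only a compactly supported tempered distribution, not a bounded measure, so the naive density $(2\pi)^{-1}Q(\tilde s)\hat\eta(t)\hat h(t)dt\,d\sigma_{m}$ need not define a measure at all; this is precisely where the argument must refine \cite[Lemma~5.2]{PSS-SSF}. I resolve this by regularising $\hat h$ to a smooth compactly supported function $\hat h_{\epsilon}$, reading off the corresponding honest measure $\mu_{\epsilon}$, and controlling $\|\mu_{\epsilon}\|$ uniformly in $\epsilon$ via Parseval. For the test function $g_{\tilde x}(t):=\hat\eta(t)\int_{S_{m}}Q(\tilde s)\prod_{j}e^{its_{j}x_{j}}d\sigma_{m}$, the inverse Fourier transform computes (using $\sum_{j}s_{j}=1$ and translation invariance) to $\check g_{\tilde x}(y)=\int_{S_{m}}Q(\tilde s)\eta(y+\sum_{j}s_{j}x_{j})d\sigma_{m}$, whose $L^{1}$-norm is bounded by $\|\eta\|_{1}\int_{S_{m}}|Q|\,d\sigma_{m}$ uniformly in $\tilde x$; Parseval then gives $\|\mu_{\epsilon}\|\le\const\,\|h\|_{\infty}$ uniformly in $\epsilon$. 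Weak-$\ast$ compactness of measures on $[1/4,4]\times S_{m}$ yields a limit measure $\mu$ of the required total variation, completing (i) after undoing the initial dilation.

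Claim~(ii) is immediate from~(i) by Littlewood--Paley: write $h=\sum_{n\in\mathbb Z}h\ast W_{n}$; after an elementary splitting of each block into positive- and negative-frequency pieces, claim~(i) applies with $N=n$, and summing gives $\|\phi_{m,h,Q}\|_{\cC_{m}}\le\const\sum_{n}\|h\ast W_{n}\|_{\infty}=\const\|h\|_{\tilde B^{0}_{\infty 1}}$. The whole difficulty is concentrated in the measure-construction step of~(i); the remainder is bookkeeping.
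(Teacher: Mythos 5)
There is a genuine gap, and it sits exactly where you locate ``the whole difficulty'': the claim that Parseval yields $\left\| \mu_\epsilon \right\| \leq \const \left\| h \right\|_\infty$ uniformly. Your duality argument tests $\hat\eta\,\hat h_\epsilon$ only against the special tensor-product functions $g_{\tilde x}(t)=\hat\eta(t)\int_{S_m}Q(\tilde s)\prod_j e^{its_jx_j}\,d\sigma_m$; this controls $\sup_{\tilde x}\left| \phi_{m,h,Q}(\tilde x) \right|$ (and in fact gives nothing beyond the trivial bound $\left| \phi_{m,h,Q}(\tilde x) \right|\leq \left\| h \right\|_\infty\int_{S_m}\left| Q \right| d\sigma_m$ available directly from \eqref{2stars}), but it does not control the total variation of the measure $(2\pi)^{-1}Q(\tilde s)\hat\eta(t)\hat h_\epsilon(t)\,dt\,d\sigma_m$, which is what the $\cC_m$-norm requires: since your atoms $e^{its_jx_j}$ all have sup norm one, your representation gives precisely $\left\| \phi \right\|_{\cC_m}\leq \const\int\left| \hat\eta\,\hat h_\epsilon \right| dt$. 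The total variation is a supremum over \emph{all} continuous functions of modulus at most one on $\Rl\times S_m$, not just the structured ones, and the bound you need amounts to $\| \hat\eta\,\hat h \|_{L^1}\leq \const \left\| h \right\|_\infty$ for every $h$ band-limited to $[1/2,2]$ --- which is false with any constant independent of $h$. A chirp is a counterexample: take $\hat h_n(\xi)=e^{in\xi^2}\chi(\xi)$ with $\chi\in C_c^\infty$, $\supp\chi\subset[1/2,2]$; stationary phase (van der Corput) gives $\left\| h_n \right\|_\infty\leq C n^{-1/2}$ while $\| \hat h_n \|_{L^1}=\left\| \chi \right\|_{L^1}$ stays constant, so the ratio blows up like $n^{1/2}$. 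Consequently your mollified measures $\mu_\epsilon$ admit no uniform mass bound, the weak-$\ast$ compactness step is unavailable, and the construction does not even establish $\phi\in\cC_m$, let alone the estimate with a constant independent of $h$ and $N$ that your own Littlewood--Paley summation in (ii) requires.

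The paper's proof is built precisely to avoid making $\hat h$ the density of the representing measure. After expanding $h$ through its Fourier transform and changing variables $y_j=ys_j$ (Jacobian $y^m$), the measure on the parameter space $\Omega_N=\{y_j\geq 0,\ y_1+\ldots+y_m\leq 2^{N+1}\}$ is just Lebesgue measure normalised by $2^{-mN}\left\| h \right\|_\infty$, hence of mass $\const\left\| h \right\|_\infty$; the whole of $\hat h$ is folded into the single atom $a_0(x,\tilde y)$, an integral of $(y_0/y)^{\gamma_0}y^{-m}\hat h(y_0+\ldots+y_m)e^{ixy_0}$, whose uniform boundedness and equicontinuity are proved not by assuming $\hat h\in L^1$ but by the convolution estimates of Lemma~\ref{PIMpolyLemma} (the measures $r_a$, the kernels $\delta_{m,N}$, Young's inequality), after reducing to monomials $Q$ with some exponent $\gamma_0\geq 1$. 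If you wish to keep a frequency-side parametrisation you would likewise have to transfer $\hat h$ into one of the atoms; with all atoms unimodular exponentials the desired bound is provably unattainable. Your dilation reduction to $N=0$ and the block summation in (ii) are fine, but they rest entirely on the defective step in (i).
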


The proof is based on the following lemma

\begin{lemma}
  \label{PIMpolyLemma}
  \begin{enumerate}
  \item A tempered distribution~$r_a$, $a > 0$ defined via Fourier
    transform by $$ \hat r_a (y) = 1 - \frac a{\left| y \right|},\ \
    \text{if\/~$\left| y \right| > a$} \ \ \text{and}\ \ \hat r_a(y) =
    0,\ \ \text{otherwise} $$ is a finite measure whose total variation
satisfies $$ c_0 := \sup_{a >
      0} \left\| r_a \right\|_1 < + \infty. $$ \label{PIMpolyLemma:ra}
  \item In particular, if~$h \in L^\infty$ such that~$\supp \hat h
    \subseteq \Rl_+$, then $$ \left\| h_{a, { \gamma}} \right\|_\infty \leq c_0\,
    \left\| h \right\|_\infty,\ \ \forall a > 0,\ { \gamma
      \geq 1}, $$ where $$ h_{a, { \gamma}} (x) :=
    \int_0^\infty { \left[ \frac y{y + a} \right]^\gamma} \, \hat h(y + a)\, e^{ixy}\,
    dy. $$ \label{PIMpolyLemma:ha}
  \item If~$\supp \hat h \subseteq [2^{N-1}, { 2^{N+1}}]$, then $$ \left\|
      {h_{m,N}} \right\|_\infty \leq \const\, 2^{mN}\, \left\| h\right\|_\infty,
$$
    where $$ {h_{m,N}} (x) = \int_0^\infty y^m \hat h(y)\, e^{ixy}\,
    dy. $$ \label{PIMpolyLemma:hm}
The constant above does not depend on $N$ and $h$.
  \end{enumerate}
\end{lemma}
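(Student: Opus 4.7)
The plan is to establish the three parts of the lemma in order: (i) by direct Fourier analysis together with a scaling argument, (ii) as a convolution corollary of (i) via Young's inequality, and (iii) by reducing to (ii) through an algebraic manipulation of $y^m$ combined with an induction on $m$.

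For (i), I would exploit the scaling identity $\hat r_a(y) = \hat r_1(y/a)$, which gives $r_a(x) = a\,r_1(ax)$ by Fourier dilation and reduces the uniform bound $\sup_{a>0}\|r_a\|_1 < \infty$ to showing $r_1 \in L_1(\Rl)$. Since $\hat r_1$ is even and bounded, $r_1$ is a real-valued even function, given formally by the conditionally convergent integral $r_1(x) = \tfrac{1}{\pi}\int_1^\infty (1 - 1/y)\cos(xy)\,dy$. A single integration by parts (with $u = 1 - 1/y$ and $dv = \cos(xy)\,dy$) converts this into the tractable representation
\begin{equation*}
r_1(x) \;=\; -\frac{1}{\pi}\int_{|x|}^\infty \frac{\sin v}{v^2}\,dv, \qquad x \neq 0.
\end{equation*}
This has only a logarithmic (integrable) singularity at the origin and, via one more integration by parts, decays as $O(1/x^2)$ at infinity, so $r_1 \in L_1$ and $c_0 < \infty$.

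For (ii), the substitution $z = y + a$ turns the definition of $h_{a,\gamma}$ into $e^{-iax}\int_a^\infty (1 - a/z)^\gamma\,\hat h(z)\,e^{ixz}\,dz$, and because $\supp \hat h \subseteq \Rl_+$ the domain of integration can be extended to all of $\Rl$ by replacing the multiplier with its even extension $(1 - a/|z|)_+^\gamma = (\hat r_a(z))^\gamma$. By the convolution theorem,
\begin{equation*}
h_{a,\gamma}(x) \;=\; 2\pi\,e^{-iax}\,\bigl(R_a^{(\gamma)} \ast h\bigr)(x),
\end{equation*}
where $R_a^{(\gamma)}$ denotes the inverse Fourier transform of $(\hat r_a)^\gamma$, and Young's inequality converts an $L_1$-bound on $R_a^{(\gamma)}$ into the claimed estimate. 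The main obstacle will be obtaining a $\gamma$-uniform constant: the naive iteration $\|R_a^{(\gamma)}\|_1 \le \|r_a\|_1^\gamma$ gives only $c_0^\gamma$, and matching the sharper bound requires exploiting the Hardy-space structure imposed by $\supp \hat h \subseteq \Rl_+$ (namely, $[y/(y+a)]^\gamma$ extends analytically and is bounded by $1$ on the upper half-plane), which yields a one-sided multiplier norm uniform in $\gamma$.

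For (iii), set $a = 2^{N-1}$ and use the factorisation $y^m = (y+a)^m\,(y/(y+a))^m$. Defining $\hat k(z) := z^m\,\hat h(z-a)$ (supported in $[2a, 5a] \subseteq \Rl_+$), one checks directly that $h_{m,N} = k_{a,m}$, so part (ii) gives $\|h_{m,N}\|_\infty \le c_0\,\|k\|_\infty$. Binomial expansion $(w+a)^m = \sum_{j}\binom{m}{j}a^{m-j}w^j$ then expresses $k$ as a linear combination of $h_{j,N}$ for $0 \le j \le m$ with coefficients of size $2^{mN}\cdot 2^{j-m}$, and an induction on $m$ — in which the $j = m$ contribution is absorbed back into the left-hand side — completes the proof with the claimed $2^{mN}$ bound.
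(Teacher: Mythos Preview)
Your part (ii) essentially matches the paper's argument: the identity $h_{a,\gamma}(x)=e^{-iax}(r_a*\cdots*r_a*h)(x)$ ($\gamma$ factors of $r_a$) followed by Young's inequality. Note that the paper's own proof actually yields only $c_0^\gamma$, not $c_0$; since in every application $\gamma$ is a fixed monomial degree, this suffices, and your Hardy-space detour is neither needed nor fully justified as written (the inequality $|y/(y+a)|\le 1$ fails for $\Re y<-a/2$, and in any case what would be required is an $H^\infty$ Fourier-multiplier bound, not merely the sup-norm of the symbol).

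Part (i) contains a genuine error. Since $\hat r_1(y)\to 1$ as $|y|\to\infty$, Riemann--Lebesgue forbids $r_1\in L^1$. Your integral $\int_1^\infty(1-1/y)\cos(xy)\,dy$ does not converge even conditionally (the integrand oscillates with amplitude tending to $1$), and your integration by parts tacitly discards a non-vanishing boundary term at infinity---which is precisely the Dirac mass you have lost. The paper instead writes $r_1=\delta-\check g$ with $g=1-\hat r_1$ and shows $\check g\in L^1$ via the elementary criterion $g,g'\in L^2\Rightarrow\check g\in L^1$. Your explicit formula for $r_1(x)$ at $x\neq 0$ is in fact correct (it equals $-\check g(x)$), so the repair is small: acknowledge the point mass and conclude that $r_1$ is a finite \emph{measure}, not an $L^1$-function.

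Part (iii) has a more serious gap. Your induction step gives
\[
\|h_{m,N}\|_\infty\;\le\;C\,\|k\|_\infty\;\le\;C\sum_{j=0}^{m}\binom{m}{j}a^{m-j}\|h_{j,N}\|_\infty
\]
with $C$ the constant from (ii), and absorbing the $j=m$ term requires $C<1$. But $C\ge c_0\ge 2$: indeed $r_a=\delta+(\text{$L^1$-part})$ with the $L^1$-part having integral $-1$, so the total variation is at least $1+1=2$. Thus the absorption fails outright, and no choice of $a$ rescues it. The paper's proof of (iii) is unrelated to (ii): one fixes a Schwartz function $\delta_{m,0}$ with $\hat\delta_{m,0}(y)=y^m$ on $[\tfrac12,2]$ and $\mathrm{supp}\,\hat\delta_{m,0}\subset[\tfrac14,4]$, rescales to $\delta_{m,N}(x)=2^{(m+1)N}\delta_{m,0}(2^N x)$ so that $\hat\delta_{m,N}(y)=y^m$ on $[2^{N-1},2^{N+1}]$, and observes $h_{m,N}=\delta_{m,N}*h$ with $\|\delta_{m,N}\|_1=2^{mN}\|\delta_{m,0}\|_1$. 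This is Bernstein's inequality in disguise; your reduction to (ii) is an appealing idea but does not close.
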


\begin{proof}[Proof of Lemma~\ref{PIMpolyLemma}]
Combining \cite[Lemma 7]{PoSuNGapps} and the assumptions $$ 1 - \hat r_1 \in L^2(\Rl) \
  \ \text{and}\ \ \frac d{dy} \left( 1 - \hat r_1 \right) \in L^2(\Rl), $$
we see that the function~$1 - \hat r_1$ is a Fourier transform of an
  $L^1$-function.  Thus, $r_1$ is a finite measure  as a combination of
  former $L^1$-function and the Dirac delta function.

Observe further that $$ r_a (x) = a r_1 (a x),\ \ x \in \Rl, $$
so
$$  \left\| r_a \right\|_1 = \left\| r_1 \right\|_1. $$
This completes the proof of \eqref{PIMpolyLemma:ra}.

The part~(\ref{PIMpolyLemma:ha}) follows from  Young's  inequality and the observation
  that \begin{equation}\label{temporary1} h_{a, {
        \gamma}}(x) = \int_a^\infty { \left[ \frac {y - a}y \right]^\gamma}
\hat h(y)\, e^{ixy}
  e^{-ixa}\, dy = e^{-ixa}\,( \underbrace{ r_a * \ldots *
      r_a}_{\text{$\gamma$-times}} * h (x)). \end{equation}

  For the part~(\ref{PIMpolyLemma:hm}), we consider the
  function~$\delta_{m, 0}$ such that its Fourier transform is smooth
  and is as follows $$ \supp \hat \delta_{m, 0} \subseteq \left[ \frac
    14, { 4} \right] \ \ \text{and}\ \ \hat \delta_{m, 0} (y) = y^m,\ \
  \text{if}\ \ \frac 12 \leq y \leq { 2}. $$ By, e.g.,~\cite[Lemma 7]{PoSuNGapps},
  $\delta_{m, 0} \in L^1(\Rl)$.  We also set $$ \delta_{m, N} (x) =
  2^{(m + 1) N}\, \delta_{m, 0} \left( x 2^N \right). $$ Clearly,
  $\delta_{m, N} \in L^1(\Rl)$ and $$ \left\| \delta_{m, N} \right\|_1
  = 2^{mN} \left\| \delta_{m, 0} \right\|_1. $$ Observe also that on
  the Fourier side $$ \hat \delta_{m, N} (y) = 2^{mN}\, \hat
  \delta_{m, 0}(2^{-N} y). $$ In particular, $$ \hat \delta_{m, N} (y)
  = y^m,\ \ \text{if}\ \ 2^{N-1} \leq y \leq { 2^{N+1}}. $$
  The claim now
  follows from \begin{equation}\label{temporary2} {h_{m,N}} (x) = \delta_{m, N}
* h (x) \end{equation}
%
 and Young's inequality.
\end{proof}

\begin{proof}[Proof of Theorem~\ref{PIMpolyEst}]
To prove part~(\ref{PIMpolyEst:Exponential}), we fix
a function ~$h$ such that~$\supp \hat h \subseteq [2^{N-1},
{ 2^{N+1}}]$. Using
  the definition of the integral momentum~$\phi_{m, h, Q}$ and the
  Fourier expansion $$ h(x) = \int_0^\infty \hat h(y)\, e^{ixy}\,
  dy, $$ we obtain
  \begin{multline*}
    \phi_{m, h, Q} (\tilde x) = \int_{S_m} Q(\tilde s)\, h(s_0 x_0 +
    \ldots + s_m x_m)\, d\sigma_m \\ = \int_0^\infty dy \int_{S_m}
    Q(\tilde s)\, \hat h(y)\, e^{iy s_0 x_0} \cdot \ldots \cdot
    e^{iy s_m x_m}\, d\sigma_m.
  \end{multline*}
We shall now make a substitution in the latter integration  via replacing the
current integration variables~$y$ and~$s_j$, $j = 1, \ldots, m$   
with the variables~$y_j$, $j = 0,
  \ldots, m$ such that $$ y_j = y s_j,\ \ \text{if~$j = 1, \ldots, m$}
  \ \ \text{and}\ \ y_0 = y s_0 = y \cdot \left( 1 - \sum_{j = 1}^m
    s_j \right). $$ This substitution transforms the domain of integration $$ y
\geq 0 \ \
  \text{and}\ \ s_j \geq 0 \ \ \text{and}\ \ s_1 + \ldots + s_m \leq
  1 $$ into the first octant $$ y_j \geq 0,\ \ j = 0,
  \ldots, m. $$ Introducing the notation
 \begin{equation*}
    J_{ k}(a) :=
    \begin{bmatrix}
      a & - y & -y & \cdots & -y & -y \\
      s_1 & y & 0 & \cdots & 0 & 0 \\
      s_2 & 0 & y & \cdots & 0 & 0 \\
      \vdots & \vdots & \vdots & \ddots & \vdots & \vdots \\
      s_{ k-1} & 0 & 0 & \cdots & y & 0 \\
      s_{ k} & 0 & 0 & \cdots & 0 & y
    \end{bmatrix} ,\quad k\le m,\,\,a\in\mathbb R,
  \end{equation*}
we observe that the Jacobian of our substitution is given by $$ J_m(1 -
  s_1 - \ldots - s_m).$$ 
Using the last column decomposition of the latter determinant, we
obtain $$
    J_k(a) = (-1)^{k} (-y) (-1)^{k-1} s_k y^{k-1}+
(-1)^{2k}y J_{k-1}(a)=(-1)^{2k}s_k
    y^{k}+y J_{k-1}(a)=$$$$s_k
    y^{k}+y J_{k-1}(a),\,\,\text{ for every }\,k\le m, \,\,a\in\mathbb R.$$
Thus,
  $$
  \begin{array}{ll}
    J_ k(a) & = y J_{ k-1} (a) + s_k y^{k} \\ &=
    y \left( y J_{ k-2}{(a)} + s_{{k}-1} y^{{ k}-1} \right) + s_{ k} y^{ k} \\
&=
      y^2 J_{{ k}-2}{(a)} + y^{ k}(s_{ k-1} + s_{ k})
    \ldots\\ &
     = y^{ k} J_0(a) + y^{ k} (s_1 + \ldots + s_{ k}),\quad {\text{ for every
}\, k\le m}.
  \end{array}
  $$
Since $J_0(1
  - s_1 - \ldots - s_m)=1
  - s_1 - \ldots - s_m$ , the Jacobian of the substitution above is $$J_m(1
  - s_1 - \ldots - s_m) = y^m. $$ Therefore, the integral
  momentum~$\phi_{m, h, Q}$ takes the form $$ \phi_{m, h, Q} (\tilde
  x) = \int_{\Rl_+^{m + 1}} Q(\tilde s)\, y^{-m}\, \hat h(y)\, e^{iy_0
    x_0} \cdot \ldots \cdot e^{iy_m x_m}\, dy_0\, \ldots dy_m, $$
  where $$ y: = y_0 + \ldots + y_m \ \ \text{and}\ \ \tilde s = (s_1,
  \ldots, s_m),\ \ s_j := \frac {y_j}{y},\ \ j = 1, \ldots, m. $$
Observe that, since~$\supp \hat h \subseteq [2^{N-1}, { 2^{N + 1}}]$, the  integration over~$\Rl_+^{m + 1}$ is in fact only taken over the strip 
$$ y_j \geq 0,\ \ j = 0, \ldots, m \ \ \text{and}\ \ 2^{N-1}
  \leq y_0 + \ldots + y_m \leq 2^{N+1}. $$
For the rest of the proof observe that it suffices to show the claim of the theorem for a  monomial $$ Q(\tilde s) = s_0^{\gamma_0} \cdot s_1^{\gamma_1} \cdot
  \ldots \cdot s_m^{\gamma_m}. $$ For such monomial, we shall consider
  two different scenarios.

Assume first that the monomial~$Q \equiv 1$, i.e., $\gamma_j = 0$,
  $j = 0, \ldots, m$.  In this case, using the fact that $$ 1 = s_0 +
  \ldots + s_m, $$ we can split the monomial~$Q \equiv 1$ into $m+1$
  sum of monomials where not every~$\gamma_j$ vanishes,  i. e.  $Q(\tilde
s)=s_0+s_1+\ldots +s_m$.

So we have arrived at the second scenario.  Assume now that not all of
$\gamma_j$, $j = 0, \ldots, m$ vanish.  For simplicity,
  assume~$\gamma_0 \geq 1$.  In this case, we shall show that $\phi_{m,h,Q}$
admits a representation~(\ref{Arep}) with
$$ \Omega_N = \left\{ \tilde y = (y_1, \ldots, y_m):\ \ y_j \geq 0,\ j = 1,
\ldots, m \ \ \text{and}\ \  y_1 + \ldots + y_m \leq { 2^{N + 1}} \right\},$$
equipped with the (scalar multiple of) Lebesgue measure
$d\mu_N=\frac{\|h\|_\infty}{2^{mN}}d\mu$ on  $\mathbb R^m$ and 

\begin{multline*}
   a_j(x, \tilde y) = \frac {y_j^{\gamma_j}}{y^{\gamma_j}} e^{ix
      y_j},\ \ j = 1, \ldots, m \ \ \text{and}\\ a_0 (x, \tilde y) =
\frac{2^{mN}}{\|h\|_\infty}
    \int_0^\infty \frac {y^{\gamma_0}_0}{y^{\gamma_0}}\, y^{-m}\, \hat
    h(y_0 + y_1 + \ldots + y_m)\, e^{ixy_0}\, dy_0, \\ \text{where}\ \
  y: = y_0 + \ldots + y_m,\   \tilde y = (y_1, \ldots, y_m)\in \Omega_N.
\end{multline*}

It is obvious that
$$\|a_j(\cdot,\tilde{y})\|_{\infty}=(\frac{y_j}{y})^{\gamma_j}\leq 1,\quad
\|a_j'(\cdot,\tilde{y})\|_{\infty}=y_j(\frac{y_j}{y})^{\gamma_j}\leq 2^{N+1}$$
for all $j=1,\ldots,m.$ Hence, the functions $a_j(\cdot,\tilde{y}),$
$j=1,\ldots,m$,  $\tilde y \in \Omega_N$ are uniformly bounded and uniformly equicontinuous. 
We claim that the same conclusion also holds for the functions
$a_0(\cdot,\tilde{y})$, $\tilde y \in \Omega_N$. 

Firstly, we check that
$$ \left\|  a_0(\cdot, \tilde y ) \right\|_\infty \leq \const. $$
Indeed, using the notation of Lemma~\ref{PIMpolyLemma}, 
we see that  $$ a_0\left( x, \tilde y \right) = \frac {2^{mN}}{\left\| h
  \right\|_\infty}\, \tilde h_{a, \gamma_0} \left( x \right),\ \
\text{where}\ \ a = y_1 + \ldots + y_m,\ \tilde h(x) = h_{-m,
  N}(x). $$ Thus, by Lemma~\ref{PIMpolyLemma}(\ref{PIMpolyLemma:ha})
and (\ref{PIMpolyLemma:hm}) we have $$ \left\| a_0 (\cdot, \tilde
  y)\right\|_\infty \leq \frac {2^{mN}}{\left\| h \right\|_\infty}
\left\| \tilde h_{a, \gamma_0} \right\|_\infty \leq \frac
{2^{mN}}{\left\| h \right\|_\infty} \left\|  h_{-m, N} \right\|_\infty
\leq \const. $$

Secondly, we claim that  the derivative $\frac{d}{dx}a_0 (x, \tilde y)$  is a
uniformly bounded function.
Indeed, writing this derivative as 
$$
\frac{d}{dx}a_0 (x, \tilde y) = \frac{2^{mN}}{\|h\|_\infty}
    \int_0^\infty \frac {y^{\gamma_0+1}_0}{y^{\gamma_0+1}}\, y^{-m+1}\, \hat
    h(y_0 + y_1 + \ldots + y_m)\, e^{ixy_0}\, dy_0, 
$$
and repeating  the argument used above with~$(\gamma_0+1)$  proves the uniform
boundedness of 
$a_0 (\cdot, \tilde y)$.

Now observing that
\begin{multline*}
  \int_{\Omega_N}\prod_{j=0}^ma_j(x_j,
\omega)d\mu_N \\ = \int_{\Omega_N} \left[ \int_0^\infty \frac
{y^{\gamma_0}_0}{y^{\gamma_0}}\, y^{-m}\, \hat
    h(y)\, e^{ix_0y_0}\, dy_0\, \right]\prod_{j=1}^m \frac
    {y^{\gamma_j}_j}{y^{\gamma_j}}e^{ix_jy_j}dy_1\cdot \ldots \cdot dy_m \\
=\int_{\mathbb R^{m+1}_+}  \prod_{j=0}^m \frac
{y^{\gamma_j}_j}{y^{\gamma_j}}y^{-m}\, \hat
    h(y)\, e^{ix_0y_0}\,
e^{ix_1y_1}\,\cdot \ldots \cdot \,e^{ix_my_m}\,dy_0\,dy_1\cdot \ldots \cdot dy_m\\
=\int_{\mathbb
R^{m+1}_+}  \prod_{j=0}^m s_j^{\gamma_j}y^{-m}\, \hat
    h(y)\, e^{ix_0y_0}\, e^{ix_1y_1}\,\cdot \ldots \cdot
    \,e^{ix_my_m}\,dy_0\,dy_1\cdot \ldots \cdot dy_m\\
    =\int_{\mathbb R^{m+1}_+} Q(\tilde s)y^{-m}\, \hat
    h(y)\, e^{ix_0y_0}\, e^{ix_1y_1}\,\cdot \ldots \cdot
    \,e^{ix_my_m}\,dy_0\,dy_1\cdot \ldots \cdot dy_m\\
    =\phi_{m, h, Q}(x_0,x_1,\cdot \ldots \cdot ,x_m),
\end{multline*}
we see that~$\phi_{m, h, Q}$ admits a representation
(\ref{Arep}) with are uniformly bounded and uniformly equicontinuous functions
$\{a_j(\cdot, \omega)\}_{\omega\in \Omega_N},\ 0\leq j\leq m$ (in this case, we have $\Omega^{(k)}=\Omega_N$ for all $k\ge 1$).

The proof of part~(\ref{PIMpolyEst:Besov}) is based on the estimates
obtained in the proof of part ~(\ref{PIMpolyEst:Exponential}) and the
approach from the proof of~\cite[Theorem~5.1]{Peller2006}. Observe that every element $h\in \tilde B^0_{\infty, 1}$ may be represented
as a uniformly convergent infinite sum $h = \sum_{N\in \mathbb Z}
h_N$,  where~$h_N = h * W_N$, of uniformly bounded functions $h_N$ such
that $\supp \hat h_N$ is contained in $[2^{N-1}, 2^{N+1}]$, for every
$N\in \mathbb{Z}$. Now, let $\left( \Omega, \mu \right) $ be the
direct sum of the measure spaces $\left( \Omega_N,\mu_N\right) $,
$N\in \mathbb{Z}$ (so $\Omega=\cdots \sqcup\Omega_{1}\sqcup
\Omega_{2}\sqcup\cdots$ is given by the disjoint union of
$\Omega_N$'s). Recalling from the proof above that $\mu_N(\Omega_N)=
\frac{\|h\|_\infty}{2^{mN}} \mu (\Omega_N)=\const \|h_N\|_\infty$
(here the constant does not depend on $N$), we see that the assumption
$\sum _{N\in \mathbb{Z}}\|h_N\|_\infty<\infty$ guarantees that $\left(
  \Omega, \mu \right) $ is a measurable space with finite
($\sigma$-additive) measure.

The definition of the functions $\{a_j(\cdot, \omega)\}_{\omega\in \Omega},\
0\leq j\leq m$ is now straightforward: 
the value of any such function for $\omega \in \Omega_N$ is given by the value
of the corresponding function defined in the 
proof of part ~(\ref{PIMpolyEst:Exponential}). It remains to verify that there
exists a growing sequence $\{\Omega^{(k)}\}_{k\ge 1}$ of measurable subsets of 
$\Omega$ such that for every $\omega \in \Omega$ there exists $k$ so that
$\omega\in \Omega^{(k)}$ and such that 
the families $\{a_j(\cdot, \omega)\}_{\omega\in \Omega^{(k)}},\ 0\leq j\leq m$
consist of uniformly bounded and uniformly equicontinuous functions. 
To this end, it is sufficient to set
$\Omega^{(k)}:=\sqcup_{{ N\leq k}}\Omega_{N}$ and refer to the results from part ~(\ref{PIMpolyEst:Exponential}).
This completes the proof of the theorem.
\end{proof}

\subsection*{A modified multiple operator integral of a polynomial integral
  momentum}

We continue discussing the polynomial integral momentum ~$\phi_{m, h, Q}$  of
order~$m$
associated with the function~$h \in L^\infty$ and a polynomial~$Q \in\Pl_m$.

If~$h \in \tilde   B^0_{\infty 1}$, then it follows from Theorem
~\ref{PIMpolyEst} and \eqref{AnlessthanCn} 
that $T_\phi$ is well defined.
However, in the case when~$h \not \in \tilde B^0_{\infty 1}$,  it is not
generally true that~$\phi = \phi_{m, h, Q} \in \cA_m$ and therefore the
definition~(\ref{MOIdef}) of the operator~$T_\phi$ associated
with~$\phi$ no longer makes any sense. In this latter case,
we have to resort to the modified operator integral $\hat T_\phi$.  

An important result established in~\cite{PSS-SSF}, which we shall exploit here
is that the concept of multiple
operator integral ~$\hat T_{\phi_{m, h, Q}}$ can be successfully defined under
the assumptions that~$h \in
C_b$ (continuous and bounded) and that the index $p$ in
~(\ref{MOIdef}) satisfies ~$1 < p < \infty$. The former
assumption is rather auxiliary and can likely be further relaxed,
whereas the later is principal. 

\begin{theorem}[{\cite[Theorem~5.3]{PSS-SSF}}]
  \label{PSStheorem}
  Let~$h \in C_b,$ $m\ge 1,$ $Q\in \mathcal P_m$. Let ~$\hat T_\phi=\hat
T_\phi^{\tilde H}$ be the modified multiple operator integral
  associated with a polynomial integral momentum $\phi = \phi_{m, h, Q}$ and an
arbitrary $(m+1)$-tuple of bounded self-adjoint operators
$\tilde H=(H_0,\ldots ,H_m)$.  If~$1 < p < \infty$ and $\tilde p=(p_1,\dots, p_m)$, $1<p_j<\infty$, $1\leq j\leq m$ satisfies the equality 
$\frac{1}{p}=\frac{1}{p_1}+\cdots+\frac{1}{p_m}$, then
\begin{equation}\label{PSStheoremEst} 
    \left\|   \hat T_\phi \right\|_{\tilde p\to p} \leq \const \left\| h 
\right\|_\infty.
\end{equation}
\end{theorem}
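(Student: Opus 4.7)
The natural strategy is to perform a Littlewood--Paley decomposition of $h$ and to build $\hat T_\phi$ as the sum of the modified multiple operator integrals corresponding to the dyadic pieces. Theorem~\ref{PIMpolyEst} takes care of each individual dyadic block; the nontrivial task is to control the resulting sum in $\cS^p$ by $\|h\|_\infty$ rather than by the (possibly infinite) Besov norm $\|h\|_{\tilde B^0_{\infty 1}}$. This is precisely why the modified integral $\hat T_\phi$, rather than $T_\phi$, is needed, and precisely why the restriction $1 < p < \infty$ is essential.

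Set $h_N := h*W_N$ for $N\in\Z$, with $W_N$ as in \eqref{WnFuncs}. Each $h_N$ has Fourier support in a dyadic annulus of scale $2^N$, and $\|h_N\|_\infty\leq\const\,\|h\|_\infty$. By Theorem~\ref{PIMpolyEst}(\ref{PIMpolyEst:Exponential}) together with \eqref{AnlessthanCn} and Lemma~\ref{lemma1}, the polynomial integral momentum $\phi_N:=\phi_{m,h_N,Q}$ lies in $\cC_m$ and satisfies
\begin{equation*}
  \|\hat T_{\phi_N}\|_{\tilde p\to p} \;=\; \|T_{\phi_N}\|_{\tilde p\to p} \;\leq\; \const\,\|h\|_\infty,
\end{equation*}
uniformly in $N$. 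The problem reduces to showing that $\sum_{N\in\Z}\hat T_{\phi_N}$ converges strongly on $\cS^{p_1}\times\cdots\times\cS^{p_m}$ to the modified operator integral $\hat T_\phi$, with a \emph{uniform} norm bound $\const\,\|h\|_\infty$ independent of the number of blocks.

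To secure this uniform bound I would exploit the spectral localisation of each block. Following the Fourier representation constructed in the proof of Theorem~\ref{PIMpolyEst}, the operator $\hat T_{\phi_N}(V_1,\dots,V_m)$ is essentially concentrated on the joint spectral subspaces of $H_0,\dots,H_m$ at scale $2^N$; summing in $N$ therefore amounts to a noncommutative Littlewood--Paley synthesis. Its boundedness in $\cS^p$ for $1<p<\infty$ rests on the boundedness of the triangular (Gohberg--Krein--Macaev) truncation on $\cS^p$, which fails at $p=1,\infty$ and is the reason for the restriction on $p$. Once convergence and the uniform bound are in hand, the strong limit is identified with $\hat T_\phi$ by passing to the limit in the Riemann sums $S_{\phi,n}(\tilde V)$; continuity of $h$ is used here to pass between the spectral discretisation and the integral representation of each $\phi_N$.

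\textbf{Main obstacle.} The technical heart of the argument is the uniform estimate
\begin{equation*}
  \Bigl\|\sum_{|N|\leq M}\hat T_{\phi_N}(V_1,\dots,V_m)\Bigr\|_p \;\leq\; \const\,\|h\|_\infty\,\prod_{j=1}^m\|V_j\|_{p_j},
\end{equation*}
with a constant independent of $M$. A direct triangle inequality loses a factor of $M$, so cancellation among the spectral blocks of $H_0,\dots,H_m$ at distinct dyadic scales is unavoidable. Extracting this cancellation for multilinear operator integrals in the $\cS^p$ category is the delicate ingredient that the machinery of \cite{PSS-SSF} is designed to deliver, and it is where the hypothesis $1<p<\infty$ enters in an unremovable way.
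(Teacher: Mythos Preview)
The paper does not supply a self-contained proof of this theorem: it is quoted as \cite[Theorem~5.3]{PSS-SSF}, and the only content added here is the remark that the argument in \cite{PSS-SSF} (stated there for $\tilde H=(H,\dots,H)$) goes through unchanged for an arbitrary $(m+1)$-tuple of bounded self-adjoint operators, with details left to the reader. There is therefore no in-paper proof to compare against beyond this citation.

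Your sketch is consistent with this in spirit: you too ultimately defer the core estimate to \cite{PSS-SSF}. The Littlewood--Paley decomposition $h=\sum_N h_N$ and the per-block bound via Theorem~\ref{PIMpolyEst} are correct preliminaries, and you correctly isolate the real content of the theorem as the uniform bound on $\bigl\|\sum_{|N|\leq M}\hat T_{\phi_N}\bigr\|_{\tilde p\to p}$. Two cautions, though. First, your heuristic that $\hat T_{\phi_N}(V_1,\dots,V_m)$ is ``concentrated on the joint spectral subspaces of $H_0,\dots,H_m$ at scale $2^N$'' is not accurate: in the representation built in the proof of Theorem~\ref{PIMpolyEst} the dyadic constraint $2^{N-1}\le y_0+\cdots+y_m\le 2^{N+1}$ is on the Fourier variables $y_j$ dual to the spectral variables $x_j$, so the blocks involve the unitary groups $e^{iy_jH_j}$ rather than spectral projections of $H_j$, and the summation over $N$ is not a triangular truncation in any direct sense. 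Second, the actual mechanism in \cite{PSS-SSF} (and already in \cite{PS-Lipschitz} for $m=1$) hinges on the UMD property of $\cS^p$ for $1<p<\infty$ combined with transference/vector-valued harmonic-analytic tools; your invocation of ``triangular truncation'' points to the right circle of ideas but is not the precise device. Since you do not carry out the uniform estimate yourself, your proposal is not a proof but a correct identification of where the difficulty lies---which, given that the paper itself offers only a citation here, is the honest stopping point short of reproducing \cite{PSS-SSF}.
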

We briefly explain why the estimate ~(\ref{PSStheoremEst}) is far more superior
than any previously available estimates (in particular ~(\ref{TphiBasicEst}) and ~(\ref{hatTphiBasicEst})) 
of the norm of a multiple operator integral. For example, for
special integral polynomial momenta given by divided differences
(we explain this notion below in some detail), the best earlier available
estimate follows from a combination of
\eqref{hatTphiBasicEst} and \cite[Lemma 2.3]{ACDS} yielding

$$\left\| \hat T_\phi \right\|_{\tilde p\to p}\leq \const  \int_{\mathbb{R}}
\left|
  \widehat{h^{(m)}} (s) \right|\, ds.$$

Of course, the condition that the function $h$ (or its derivatives) has an
absolutely integrable Fourier transform is very restrictive.
Even in the case when~$h \in \tilde B^0_{\infty 1}$ and when we deal with the
'classical' multiple operator integral~$T_\phi$
(defined via Theorem~\ref{PIMpolyEst}), the best estimate available from a
combination of \eqref{TphiBasicEst} and Theorem \ref{PIMpolyEst}(ii)
$$\left\| T_\phi \right\|_{\tilde p\to p}\leq\const \left\|   h\right\|_{\tilde
B^0_{\infty 1}},$$
is still much weaker than the estimate \eqref{PSStheoremEst}. To
see that Theorem \ref{PSStheorem} is applicable here, observe that
the assumption ~$h \in \tilde B^0_{\infty 1}$ guarantees that the
corresponding integral momentum~$\phi\in \cC_m$ (the latter
assertion is proved in Theorem
\ref{PIMpolyEst}) and hence, by Lemma \ref{lemma1}, we may replace $\hat
T_\phi$ on the right hand side of \eqref{PSStheoremEst} with the
operator $T_\phi$.

In the special case when $m=1$, the result of Theorem \ref{PSStheorem} may be
found in \cite{PS-Lipschitz}. For an arbitrary $m\in \N$, this result was proved in
~\cite{PSS-SSF} under an additional assumption that the
$(m+1)$-tuple $\tilde H$ consists of identical operators. The proof of  Theorem
\ref{PSStheorem} follows by a careful
inspection of the proof of ~\cite[Theorem~5.3]{PSS-SSF}, which shows that the
argument there continues to stand if this additional assumption is omitted.
We leave further details to the reader.

We shall need a small  addendum to Theorem~\ref{PSStheorem}, which may be viewed as a variant of 
(Weak) Dominated Convergence Lemma for modified operator integrals of polynomial integral momenta.

\begin{lemma}
  \label{DomConvLemma}
  Let $h_n, h \in C_b$ be compactly supported functions such that $$ \lim_{n \rightarrow \infty} h_n(x)
  = h(x),\ \ \forall x \in \Rl. $$ Let also~$\phi_n = \phi_{m, h_n,
    Q}$, and~$\phi = \phi_{m, h, Q}$ be the polynomial integral
  momenta associated with~$Q \in P_m$ and the functions~$h_n$
  and~$h$ respectively. If\/~$\sup_{n} \left\| h_n \right\|_\infty < + \infty$,
  then the sequence of operators~$\left\{T_{\phi_n}\right\}$ converges
  to~$T_\phi$ weakly, i.e., $$ \lim_{n \rightarrow \infty} \tr \left(
    V_0\, T_{\phi_n} \left( \tilde V \right) \right) = \tr \left(
    V_0\, T_{\phi} \left( \tilde V \right) \right),\ \ \tilde V =
  \left( V_1, \ldots, V_m \right), $$ for every~$V_j \in \cS^{p_j}$,
  where~$1 < p_j < \infty$ for every~$j = 0, \ldots, m$ and~$\sum_{j =
    0}^m \frac 1 p_j = 1$.  In particular, $$ \left\| T_\phi
  \right\|_{\tilde p \rightarrow p_0'} \leq \liminf_{n \rightarrow
    \infty} \left\| T_{\phi_n} \right\|_{\tilde p \rightarrow p_0'},\
  \ \text{where}\ \tilde p = \left( p_1, \ldots, p_m \right),\
  \ \text{and}\ \frac{1}{p_0}+\frac{1}{p_0'}=1. $$
\end{lemma}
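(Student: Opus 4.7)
My strategy is to express $\tr(V_0 T_{\phi_n}(\tilde V))$ as the integral of $\phi_n$ against a fixed finite complex measure on $\mathbb R^{m+1}$ that does not depend on $n$, and then to conclude by two applications of dominated convergence. The uniform boundedness needed to reduce to a dense subset comes from $\sup_n \|h_n\|_\infty < \infty$ and Theorem~\ref{PSStheorem}, which give $\sup_n \|T_{\phi_n}\|_{\tilde p \to p_0'} < \infty$; in parallel, the bound $|\phi_n(\tilde x)| \le \|h_n\|_\infty \int_{S_m} |Q|\,d\sigma_m$ and dominated convergence on the finite measure space $S_m$ yield $\phi_n \to \phi$ pointwise with $\sup_n \|\phi_n\|_\infty < \infty$. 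By multilinearity and this uniform boundedness, it suffices to verify the trace convergence on a dense subset of the product of Schatten classes, and so I may assume each $V_j = \xi_j \otimes \eta_j$ is rank one.

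For such rank-one inputs and bounded self-adjoint $\tilde H$, each Riemann sum $S_{\phi_n, k}$ defining the modified MOI is in fact finite. Telescoping the product $V_0\cdot E^0_{l_0,k} V_1 E^1_{l_1,k}\cdots V_m E^m_{l_m,k}$ from right to left and repeatedly using $V_j\zeta = \langle\zeta,\eta_j\rangle\xi_j$, one collapses the trace of the $\tilde l$-th summand to
\begin{equation*}
\langle E^0_{l_0,k}\xi_1,\eta_0\rangle\,\prod_{j=1}^{m-1}\langle E^j_{l_j,k}\xi_{j+1},\eta_j\rangle\,\langle\xi_0, E^m_{l_m,k}\eta_m\rangle.
\end{equation*}
Introducing the complex Borel measures $\nu_0(\cdot)=\langle E^0(\cdot)\xi_1,\eta_0\rangle$, $\nu_j(\cdot)=\langle E^j(\cdot)\xi_{j+1},\eta_j\rangle$ for $1\le j\le m-1$, and $\nu_m(\cdot)=\langle\xi_0,E^m(\cdot)\eta_m\rangle$, each a complex measure of finite total variation supported on the compact spectrum of the corresponding $H_j$, the $\tilde l$-sum becomes $\int\phi_n^{(k)}\,d\nu$, where $\nu := \nu_0\otimes\cdots\otimes\nu_m$ is a finite complex measure on $\mathbb R^{m+1}$ and $\phi_n^{(k)}$ is the step function approximation of $\phi_n$ adapted to the partition $\{[l/k,(l+1)/k)\}$.

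Sending $k\to\infty$ and using continuity of $\phi_n$ together with dominated convergence against $\nu$ yields $\tr(V_0 T_{\phi_n}(\tilde V)) = \int\phi_n\,d\nu$; the crucial point is that $\nu$ is $n$-independent. A second dominated convergence, using $\phi_n\to\phi$ pointwise and $\sup_n\|\phi_n\|_\infty<\infty$, then delivers $\tr(V_0 T_{\phi_n}(\tilde V))\to\tr(V_0 T_\phi(\tilde V))$ on the dense class of rank-one inputs, and the uniform boundedness of $T_{\phi_n}$ promotes this to all $V_j\in\cS^{p_j}$. The liminf estimate follows by trace duality: $|\tr(V_0 T_\phi(\tilde V))| = \lim_n|\tr(V_0 T_{\phi_n}(\tilde V))| \le \liminf_n\|T_{\phi_n}\|_{\tilde p\to p_0'}\|V_0\|_{p_0}\prod_j\|V_j\|_{p_j}$, so taking suprema over $\|V_0\|_{p_0},\|V_j\|_{p_j}\le 1$ gives $\|T_\phi\|_{\tilde p\to p_0'}\le\liminf_n\|T_{\phi_n}\|_{\tilde p\to p_0'}$. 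The main technical point is the telescoping that exposes an $n$-independent product measure; after that, everything reduces to routine invocations of Lebesgue's theorem.
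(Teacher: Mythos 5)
Your argument is correct, but it follows a genuinely different route from the paper. The paper fixes arbitrary $V_j\in\cS^{p_j}$ and observes, via Theorem~\ref{PSStheorem}, that $h\mapsto \tr\bigl(V_0\,T_{\phi_{m,h,Q}}(\tilde V)\bigr)$ is a bounded linear functional on $C_b$; it then invokes the Riesz--Markov theorem to produce a single finite representing measure (independent of $h$) and concludes with one application of dominated convergence to $h_n\to h$. You instead construct the representing measure explicitly: using the uniform bound of Theorem~\ref{PSStheorem} (and H\"older/trace duality) you reduce by density of finite-rank operators in $\cS^{p_j}$, $p_j<\infty$, to rank-one inputs, and for those you collapse the Riemann sums $S_{\phi_n,k}$ defining $\hat T_{\phi_n}$ into $\int\phi_n^{(k)}\,d\nu$ with $\nu=\nu_0\otimes\cdots\otimes\nu_m$ a fixed finite product of scalar spectral measures; two dominated-convergence passages (in $k$, using continuity of $\phi_n$, and then in $n$, using $\phi_n\to\phi$ pointwise with $\sup_n\|\phi_n\|_\infty<\infty$) give the claim, and the $\liminf$ estimate follows by duality exactly as in the paper. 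What the paper's route buys is brevity: no rank-one reduction, no extra limit in $k$, and no need to exhibit the measure, at the price of appealing to the abstract Riesz--Markov representation (applied, strictly speaking, to the restriction of the functional to compactly supported functions). What your route buys is transparency and self-containedness: the $n$-independence (indeed $h$-independence) of the measure is visible by construction, and the argument only uses the Riemann-sum definition of the modified operator integral together with Theorem~\ref{PSStheorem}; note that both approaches, including yours, tacitly use that $\tilde H$ consists of bounded operators (so each Riemann sum is a finite sum), which is consistent with the paper's standing assumptions.
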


\begin{proof}[Proof of Lemma~\ref{DomConvLemma}]
  Fix~$V_j$ as in the statement of the lemma.  According to
  Theorem~\ref{PSStheorem}, the mapping $$ h \in C_b \mapsto \psi(h): =
  \tr \left( V_0\, T_{\phi} \left( \tilde V \right)
  \right) $$ is a continuous linear functional on~$C_b$.  By
  the Riesz-Markov theorem \cite[Theorem IV.18]{RS}, there is a finite measure~$m$ such that $$
  \psi(f) = \int_\Rl f(x)\, dm(x) $$
for any continuous function $f$ of compact support. Under such terms, the weak
  convergence claimed in the lemma, turns into the convergence $$
  \lim_{n \rightarrow \infty} \psi(h_n) = \psi(h) $$ or rather $$
  \lim_{n \rightarrow \infty} \int_\Rl h_n(x)\, dm(x) = \int_\Rl
  h(x)\, dm(x). $$ The latter can be seen via the classical dominated
  convergence theorem for Lebesgue integration.  The lemma is proved.
\end{proof}

\subsection*{Divided differences}
\label{sec:divdif}

Let~$x_0, x_1, \ldots \in \Rl$ and let~$f$ be a tempered distribution such that~ $f^{(k)} \in L^\infty$ for every $1\leq k\le m$. The divided difference
$f^{[ k]}$ is defined recursively as follows.

The divided difference of the zeroth order~$f^{[0]}$ is the
function~$f$ itself.  The divided difference of order~$k = 1, \ldots,
m$ is defined by
\begin{align*}
  f^{[k]} \left( x_0, x_1, \tilde x \right) :=
  \begin{cases}\frac
    { f^{[k - 1]} (x_0, \tilde x) - f^{[k - 1]}(x_1,
      \tilde x)}{x_0 - x_1}, & \text{if~$x_0
      \neq x_1$}, \\ \frac {d}{dx_1} f^{[k - 1]} (x_1,
    \tilde x), & \text{if~$x_0=x_1$},
  \end{cases}
\end{align*}
where~$\tilde x = \left(x_2, \ldots, x_k \right) \in \Rl^{k - 1}$. Note that
$f^{[k+1]}=(f^{[k]})^{[1]}$. We claim that the
function~$f^{[ k]}$ admits the following integral representation
\begin{equation}
  \label{DDintRep}
  f^{[ k]} (x_0,
  \ldots, x_{ k}) = \int_{S_{ k}} f^{({ k})} \left( s_0 x_0 + \ldots + s_{ k}
x_{ k}
  \right)\, d\sigma_{ k} {, \,\,\text{ for every }\,  k\le m}.
\end{equation}
In other words, the function~$f^{[k]}$ is an $k$-th order
polynomial integral momentum associated with the
polynomial~$Q \equiv 1$ and the $k$-th derivative~$h = f^{(k)}$.

If~$k = 1$, then the claim~(\ref{DDintRep}) is a simple
restatement of the fundamental theorem of calculus
with the substitution  $t=x_0-s_1x_0+s_1x_1$ as follows
$$\int_{S_1} f' \left( s_0 x_0 + s_1 x_1
  \right)\, d\sigma_1\stackrel{(\ref{minv})}{=}\int_0^1 f' \left( (1-s_1) x_0 + s_1
x_1
  \right)\,
  ds_1=$$
  $$=\left\{\begin{array}{ll}\frac{1}{x_1-x_0}\int_{x_0}^{x_1} f'
\left(t\right)\,
  dt,&x_0\neq x_1\\\int_0^1 f' \left( x_0 \right)\,
  ds_1, & x_0=x_1\end{array}\right.=
  \left\{\begin{array}{ll}\frac{f(x_1)-f(x_0)}{x_1-x_0},&x_0\neq x_1\\f'
\left( x_0 \right), &
  x_0=x_1\end{array}\right.=f^{[1]}(x_0,x_1).
$$
\\

For~$1<k\leq m$, we prove \eqref{DDintRep} via the method of mathematical induction.
Suppose that we have already established that
$$
f^{[k]} (x_0,  \ldots, x_k) = \int_{S_k} f^{(k)} \left( s_0 x_0 + \ldots + s_k 
x_k  \right)\, d\sigma_k, \text{ for all } k\le n<m.
$$
Let us prove the statement for $n+1.$ For $\tilde
x=(x_2,\ldots ,x_{n+1})$ denote $$f_{\tilde x}(x):=f^{[n]} (x,
  x_2,\ldots, x_{n+1})=\int_{S_n} f^{(n)} \left( s_0 x+s_1 x_2 + \ldots +
  s_{n}  x_{n+1}
  \right)\, d\sigma_n,$$ which is an $n$-th order integral momentum with the
function $h:=f^{(n)}$ and $Q\equiv 1.$
Now, it follows from Lemma~\ref{PIMandDD} below that
$$\psi(x_0,\ldots ,x_{n+1}):=f_{\tilde
x}^{[1]}(x_0,x_1)=f^{[n+1]}(x_0,\ldots ,x_{n+1})$$
is an $(n+1)$-th order integral momentum associated with the function
$h'=f^{(n+1)}$ and $Q\equiv 1$, that is
  $$f^{[n+1]}(x_0,\ldots ,x_{n+1})=\int_{S_{n+1}} f^{(n+1)} \left( s_0 x_0+ \ldots +
  s_{n+1}
  x_{n+1}
  \right)\, d\sigma_{n+1}.$$
In  other words, the claim  \eqref{DDintRep} also holds for $k=n+1$.

Immediate implications of Theorems~\ref{PIMpolyEst} and~\ref{PSStheorem}
for divided differences are as follows.

\begin{theorem}
  \label{DDImpl}
  \begin{enumerate}
  \item If~$f \in \tilde B^m_{\infty 1}$, then the
    operator~(\ref{MOIdef}) is bounded and $$ \left\| T_{f^{[m]}}
    \right\|_{\tilde p\to p} \leq \const \left\| f\right\|_{\tilde B^m_{\infty
1}}. $$

  \item If\/~$f^{(m)} \in C_b$ and if\/~$1 < p < \infty$, then the
    (modified) operator~$\hat T_{f^{[m]}}$ is bounded and $$ \left\|
      \hat T_{f^{[m]}} \right\|_{\tilde p\to p} \leq \const \left\| f^{(m)}
    \right\|_\infty. $$

  \end{enumerate}
\end{theorem}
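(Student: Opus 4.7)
The plan is to observe that both statements are immediate consequences of the identification~\eqref{DDintRep} of the divided difference $f^{[m]}$ as a polynomial integral momentum of order $m$, combined with the results already established in Theorems~\ref{PIMpolyEst} and~\ref{PSStheorem}. Concretely, \eqref{DDintRep} reads
\begin{equation*}
f^{[m]}(x_0,\ldots,x_m) = \int_{S_m} f^{(m)}\bigl(s_0 x_0 + \ldots + s_m x_m\bigr)\, d\sigma_m = \phi_{m, h, Q}(x_0,\ldots,x_m),
\end{equation*}
where we take $h = f^{(m)}$ and $Q \equiv 1$.

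For part~(i), the assumption $f \in \tilde B^m_{\infty 1}$ together with the norm equivalence mentioned in the excerpt (namely that $\|f\|_{\tilde B^m_{\infty 1}}$ and $\|f^{(m)}\|_{\tilde B^0_{\infty 1}}$ are equivalent on $\tilde B^m_{\infty 1}$) gives $h = f^{(m)} \in \tilde B^0_{\infty 1}$ with
$\|h\|_{\tilde B^0_{\infty 1}} \leq \mathrm{const}\, \|f\|_{\tilde B^m_{\infty 1}}$.
Theorem~\ref{PIMpolyEst}(ii) then yields $\phi := f^{[m]} \in \cC_m$ with $\|\phi\|_{\cC_m} \leq \mathrm{const}\, \|h\|_{\tilde B^0_{\infty 1}}$. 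Chaining this with \eqref{AnlessthanCn} and the basic estimate \eqref{TphiBasicEst}, we obtain
\begin{equation*}
\bigl\| T_{f^{[m]}} \bigr\|_{\tilde p \to p} \leq \|\phi\|_{\cA_m} \leq \|\phi\|_{\cC_m} \leq \mathrm{const}\, \|f\|_{\tilde B^m_{\infty 1}},
\end{equation*}
which is the claim.

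For part~(ii), since $f^{(m)} \in C_b$, the momentum representation $f^{[m]} = \phi_{m, f^{(m)}, 1}$ places us directly in the hypothesis of Theorem~\ref{PSStheorem} (with $h = f^{(m)}$ and $Q \equiv 1$), yielding
\begin{equation*}
\bigl\| \hat T_{f^{[m]}} \bigr\|_{\tilde p \to p} \leq \mathrm{const}\, \|f^{(m)}\|_\infty,
\end{equation*}
for any tuple $\tilde p = (p_1,\ldots,p_m)$ with $1 < p_j < \infty$ and $\frac{1}{p} = \sum_j \frac{1}{p_j}$, as required.

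Neither step presents a genuine obstacle: the content of the theorem lies entirely in the preceding machinery, and the only conceptual point is the translation via \eqref{DDintRep}. The most delicate sub-point is the use of the norm equivalence $\|f\|_{\tilde B^m_{\infty 1}} \asymp \|f^{(m)}\|_{\tilde B^0_{\infty 1}}$ in part~(i), which was justified in the excerpt by reference to \cite[(36)]{Triebel-1982}; beyond that invocation, the proof is essentially a one-line citation of Theorems~\ref{PIMpolyEst} and~\ref{PSStheorem} respectively.
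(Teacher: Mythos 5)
Your proposal is correct and follows exactly the route the paper intends: the paper itself presents Theorem~\ref{DDImpl} as an ``immediate implication'' of \eqref{DDintRep} combined with Theorem~\ref{PIMpolyEst} (via \eqref{AnlessthanCn} and \eqref{TphiBasicEst}) for part (i) and Theorem~\ref{PSStheorem} for part (ii), which is precisely your chain of estimates, including the norm equivalence $\|f\|_{\tilde B^m_{\infty 1}} \asymp \|f^{(m)}\|_{\tilde B^0_{\infty 1}}$ already recorded in the Besov-space subsection.
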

When ~$f^{(m)} \in C_b$, we shall also consider the function~$\tilde f^{[m]}$ defined by setting
\begin{equation}\label{DDtilde}
\tilde f^{[m]}(x_0, \ldots, x_{m-1}) := g^{[m-1]}(x_0, \ldots, x_{m-1}),\ {\rm where}\ g :=
f'.
\end{equation}
It follows from \eqref{DDintRep} and definition \eqref{2stars} that the
function~$\tilde f^{[m]}$ is an $(m-1)$-th order
polynomial integral momentum associated with the function~$h =
f^{(m)}$ and the polynomial~$Q = 1$.

\subsection*{Perturbation of multiple operator integrals}

Let~$\phi = \phi_{m, h, Q}$ be a polynomial integral momentum
associated with a function~$h$ such that~$h' \in L^\infty$ and~$Q \in \Pl_m$.
For~$\tilde x = (x_2, \ldots, x_{m + 1})$ we set~$f_{\tilde x} (x) =
\phi(x, \tilde x)$.  We now consider the divided difference $$
\psi(x_0, \ldots, x_{m + 1}) := f_{\tilde x}^{[1]}(x_0, x_1). $$

\begin{lemma}
  \label{PIMandDD}
  The function~$\psi$ is the ~$(m + 1)$-th order integral momentum ~$ \phi_{m+1,
h', Q_1}$
  associated with the function~$h'$ and the polynomial~$Q_1 \in \Pl_{m+1}$
  given by $$ Q_1 (s_1, \ldots, s_{m+1}) = Q(s_2, \ldots, s_{m+1}). $$
\end{lemma}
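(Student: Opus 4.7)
The plan is to unfold both sides of the claimed identity via the integral representation \eqref{2stars} and to match them by a single change of variables on the simplex.

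\medskip

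First, I would apply the fundamental theorem of calculus (using the hypothesis $h'\in L^\infty$ so that differentiation under the integral sign is justified) to the difference
\begin{equation*}
  f_{\tilde x}(x_0)-f_{\tilde x}(x_1) = \int_{S_m} Q(s_1,\ldots,s_m)\bigl[h(s_0x_0+u)-h(s_0x_1+u)\bigr]\,d\sigma_m,
\end{equation*}
where $u=s_1x_2+\ldots+s_m x_{m+1}$ and $s_0=1-s_1-\ldots-s_m$. Writing $h(s_0x_0+u)-h(s_0x_1+u) = s_0(x_0-x_1)\int_0^1 h'\bigl(s_0(tx_0+(1-t)x_1)+u\bigr)\,dt$ and dividing by $x_0-x_1$ yields, for $x_0\neq x_1$,
\begin{equation*}
  f_{\tilde x}^{[1]}(x_0,x_1)=\int_{S_m}\!\!\int_0^1 Q(s_1,\ldots,s_m)\,s_0\,h'\bigl(s_0tx_0+s_0(1-t)x_1+\sum_{j=1}^m s_jx_{j+1}\bigr)\,dt\,d\sigma_m.
\end{equation*}
The case $x_0=x_1$ then follows by continuity since both sides of the lemma are continuous in $(x_0,x_1)$.

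\medskip

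Next I would compare this to the target expression $\phi_{m+1,h',Q_1}(x_0,\ldots,x_{m+1})$, which by definition is
\begin{equation*}
  \int_{S_{m+1}} Q(s_2',\ldots,s_{m+1}')\,h'\Bigl(\sum_{j=0}^{m+1}s_j'x_j\Bigr)\,d\sigma_{m+1}.
\end{equation*}
The change of variables is the natural one: set $s_j'=s_{j-1}$ for $j=2,\ldots,m+1$ and write the pair $(s_0',s_1')$ in the form $(s_0 t,s_0(1-t))$ with $t\in[0,1]$ and $s_0=s_0'+s_1'$. Parameterising $S_{m+1}$ by its last $m+1$ coordinates $(s_1',s_2',\ldots,s_{m+1}')\in R_{m+1}$ (so that $d\sigma_{m+1}$ becomes Lebesgue measure by \eqref{minv}) and writing $s_1'=s_0(1-t)$, one computes $ds_1'=s_0\,dt$, which supplies exactly the Jacobian factor $s_0$ present in the formula above.

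\medskip

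Executing this substitution and regrouping turns the Lebesgue integral on $R_{m+1}$ into $\int_0^1\!\int_{S_m}(\cdot)\,s_0\,d\sigma_m\,dt$, and the integrand becomes precisely $Q(s_1,\ldots,s_m)\,h'\bigl(s_0tx_0+s_0(1-t)x_1+\sum_{j=1}^m s_jx_{j+1}\bigr)$ since $Q_1(s_1',\ldots,s_{m+1}')=Q(s_2',\ldots,s_{m+1}')=Q(s_1,\ldots,s_m)$. Matching this with the earlier expression for $f_{\tilde x}^{[1]}(x_0,x_1)$ via Fubini closes the proof. The only step requiring care is the bookkeeping of the change of variables on the simplex; everything else is formal substitution and continuity.
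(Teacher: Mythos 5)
Your proposal is correct and follows essentially the same route as the paper: both introduce the factor $s_0$ and an extra integration over $[0,1]$ (you via the fundamental theorem of calculus applied to $h$, the paper via differentiating under the integral sign and then invoking the $k=1$ case of \eqref{DDintRep}), followed by the same change of variables $(s'_0,s'_1)=(s_0l_0,\,s_0l_1)$ with Jacobian $s_0$. The only cosmetic difference is that the paper's use of the first-order representation covers the coincident case $x_0=x_1$ directly, whereas you appeal to continuity.
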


\begin{proof}[Proof of Lemma~\ref{PIMandDD}]
  By definition \eqref{2stars} and taking the derivative,
  \begin{multline*}
    f'_{\tilde x} (x) = \int_{\scriptstyle s_2 + \ldots +
      s_{m+1} \leq 1 \atop \scriptstyle s_2, \ldots, s_{m+1} \geq 0}
    Q(\tilde s)\, s_0\, h'(s_0 x + s_2 x_2 + \ldots + s_{m + 1} x_{m +
      1})\, ds_2\, \ldots \, ds_{m + 1}, \\ \text{where}\ \ s_0 = 1 -
    s_2 - \ldots - s_{m + 1} \ \ \text{and}\ \ \tilde s = (s_2,
    \ldots, s_{m + 1}).
  \end{multline*}
  On the other hand, via the representation~(\ref{DDintRep}) for~$m =
  1$, $$ f^{[1]}_{\tilde x} (x_0, x_1) = \int_0^1 f'_{\tilde x} \left(
    l_0 x_0 + l_1 x_1 \right)\, dl_1,\ \ \text{where}\ \ l_0 = 1-
  l_1. $$ Combining the two,
  \begin{multline*}
    \psi(x_0, \ldots, x_{m + 1}) =
    \int_0^1 dl_1 \int_{\scriptstyle s_2 + \ldots + s_{m+1} \leq
      1 \atop \scriptstyle s_2, \ldots, s_{m+1} \geq 0} Q(\tilde s)\,
    s_0 h' \bigl( s_0 (l_0 x_0 + l_1 x_1) + \\ s_2 x_2 + \ldots + s_{m + 1}
    x_{m + 1} \bigr)\, ds_2\, \ldots \, ds_{m + 1}.
  \end{multline*}
  We next substitute the integration $(m + 1)$-tuple~$(l_1, s_2,
  \ldots, s_{m + 1})$ with $(m+1)$-tuple $(s'_1, \ldots, s'_{m + 1})$
  as follows $$ s'_1 = s_0 l_1 \ \ \text{and}\ \ s'_j = s_j,\ \ j = 2,
  \ldots, m+ 1. $$ Under such substitution, the integration domain $$
  0 \leq l_1 \leq 1 \ \ \text{and}\ \ s_2 + \ldots + s_{m + 1} \leq
  1,\ \ s_2, \ldots, s_{m + 1} \geq 0 $$ becomes the domain $$ s'_1 +
  \ldots + s'_{m + 1} \leq 1 \ \ \text{and}\ \ s'_1, \ldots, s'_{m +
    1} \geq 0; $$ $$s'_1 +
  \ldots + s'_{m + 1}
=s_0l_1+s_2+\ldots +s_{m+1}=(1-s_2-\ldots -s_{m+1})l_1+s_2+\ldots +s_{m+1}$$
  $$=l_1+(1-l_1)(s_2+\ldots +s_{m+1})\le
  l_1+1-l_1=1.$$ Computing the Jacobian $J$ of the substitution, we have

  \begin{equation*}
    J :=
    \begin{bmatrix}
      s_0 & 0 & 0 & \cdots  & 0 \\
      0 & 1 & 0 & \cdots  & 0 \\
      0 & 0 & 1 & \cdots  & 0 \\
      \vdots & \vdots & \vdots & \ddots  & \vdots \\
      0 & 0 & 0 & \cdots  & 1
    \end{bmatrix}=s_0 .
  \end{equation*}

  Observe also that if $$ s'_0 := 1 - s'_1 - \ldots - s'_{m + 1}, $$
  then~$s'_0 = 1-s_0l_1-s_2-\ldots -s_{m+1}=s_0-s_0l_1= s_0 l_0$.  Thus, we obtain
that
  \begin{multline*}
    \psi(x_0, \ldots, x_{m + 1}) =
    \int_{\scriptstyle s'_1 + \ldots + s'_{m + 1}\le 1 \atop \scriptstyle
      s'_1, \ldots, s'_{m + 1} \geq 0} Q(\tilde s)\, h'(s'_0 x_0 +
    \ldots + s'_{m + 1} x_{m + 1})\, ds'_1\, \ldots \, ds'_{m + 1}
    \\
    \quad=\int_{S_{m + 1}} Q_1 (\tilde s')\, h'(s'_0 x_0 + \ldots + s'_{m +
      1} x_{m + 1})\, d\sigma_{m +1 }, \\ \text{where}\ \ \tilde s' =
    (s'_1, \ldots, s'_{m + 1}).
  \end{multline*}
  That is, function~$\psi$ is a polynomial integral momentum.
\end{proof}

\begin{theorem}
  \label{PertubationFormula}
  Let~$\phi = \phi_{m, h, Q}$ and~$\psi = \phi_{m+1, h', Q_1}$ be from Lemma~\ref{PIMandDD}.  Let~$A, B$ are
  bounded self-adjoint operators.  If~$h \in \tilde B^1_{\infty 1}$ and
  if~$\tilde H = (H_1, \ldots, H_m)$, then $$ T_{\phi}^{A, \tilde H}
  (V_1, \ldots, V_m) - T_\phi^{B, \tilde H} (V_1, \ldots, V_m) =
  T_\psi^{A, B, \tilde H} (A - B, V_1, \ldots, V_m). $$
\end{theorem}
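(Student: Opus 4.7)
The key observation is the scalar identity
\[
\phi(x_0,\tilde x)-\phi(x_1,\tilde x)=(x_0-x_1)\,\psi(x_0,x_1,\tilde x),\qquad \tilde x=(x_2,\ldots,x_{m+1}),
\]
which is just the definition of $\psi=f_{\tilde x}^{[1]}$ rewritten. My plan is to lift this scalar identity to the level of multiple operator integrals using the algebraic relation~\eqref{TphiHomomorphismI}.

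First I would introduce two auxiliary functions on $\Rl^{m+2}$ by trivial extension of $\phi$:
\[
\Phi_0(x_0,x_1,x_2,\ldots,x_{m+1}):=\phi(x_0,x_2,\ldots,x_{m+1}),\qquad \Phi_1(x_0,x_1,x_2,\ldots,x_{m+1}):=\phi(x_1,x_2,\ldots,x_{m+1}).
\]
Given any representation~\eqref{Arep} of $\phi$ as an element of $\cA_m$, inserting the constant function $1$ as the factor in slot $1$ (resp.\ slot $0$) produces a representation of $\Phi_0$ (resp.\ $\Phi_1$) in $\cA_{m+1}$. A direct comparison with the definition of the multiple operator integral, using $1(A)=1(B)=I$, then yields
\[
T_{\Phi_0}^{A,B,\tilde H}(I,V_1,\ldots,V_m)=T_\phi^{A,\tilde H}(V_1,\ldots,V_m),\quad T_{\Phi_1}^{A,B,\tilde H}(I,V_1,\ldots,V_m)=T_\phi^{B,\tilde H}(V_1,\ldots,V_m).
\]

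Next, since $\Phi_0-\Phi_1=(x_0-x_1)\,\psi$, two applications of the algebraic rule~\eqref{TphiHomomorphismI} (one with $s_0=1$, the other with $s_1=1$) combined with linearity give
\[
T_{\Phi_0-\Phi_1}^{A,B,\tilde H}(I,V_1,\ldots,V_m)=T_\psi^{A,B,\tilde H}(AI,V_1,\ldots,V_m)-T_\psi^{A,B,\tilde H}(IB,V_1,\ldots,V_m)=T_\psi^{A,B,\tilde H}(A-B,V_1,\ldots,V_m).
\]
Combining the two steps delivers the claimed identity.

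The main technical obstacle is justifying the appearance of the identity operator $I$ as an input to $T$, since the definition~\eqref{MOIdef} is formally stated only for Schatten-class arguments. This is handled via the integral representation~\eqref{Arep}: once $\psi\in\cC_{m+1}\subseteq\cA_{m+1}$ (which holds by Theorem~\ref{PIMpolyEst}(ii) applied to $h'\in\tilde B^0_{\infty 1}$, using the norm equivalence between $\|h\|_{\tilde B^1_{\infty 1}}$ and $\|h'\|_{\tilde B^0_{\infty 1}}$), the multilinear map $T_\psi$ is given by an absolutely convergent Bochner integral for any bounded first input, and the same applies to $\Phi_0$, $\Phi_1$ and $(x_0-x_1)\psi$. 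When $h\in\tilde B^1_{\infty 1}$ does not itself lie in $\tilde B^0_{\infty 1}$, I would split $h=p+h_0$ into a degree-one polynomial $p$ (for which the identity reduces to an elementary calculation with bounded operators $A,B,H_j$) and a piece $h_0$ with Fourier spectrum bounded away from zero, decompose $h_0$ dyadically and use the norm estimates of Theorem~\ref{PIMpolyEst}(i) on each frequency band, and then transfer the identity to the limit via Lemma~\ref{DomConvLemma}.
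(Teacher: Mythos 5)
Your argument is essentially the paper's own proof: the same scalar identity $(x_0-x_1)\,\psi(x_0,x_1,\tilde x)=\phi(x_0,\tilde x)-\phi(x_1,\tilde x)$, the same trivial extensions (your $\Phi_0,\Phi_1$ are the paper's $\phi_1,\phi_2$), and the same combination of~\eqref{TphiHomomorphismI}, linearity of $\phi\mapsto T_\phi$ on $\cA_{m+1}$, and insertion of the identity operator in the first slot. The only difference is that you make explicit the well-definedness issues (membership of $\psi$ in $\cC_{m+1}$ via Theorem~\ref{PIMpolyEst}(ii) and an approximation fallback through Lemma~\ref{DomConvLemma}), which the paper settles more briefly by citing Theorem~\ref{PIMpolyEst}, Lemma~\ref{PIMandDD} and the algebra property of $\cA_{m+1}$.
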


\begin{proof}[Proof of Theorem~\ref{PertubationFormula}]
Let us denote
  \begin{multline*}
\psi_1 (x_0, \ldots,
    x_{m + 1}) := x_0\, \psi(x_0, \ldots, x_{m + 1});\
    \psi_2 (x_0, \ldots, x_{m + 1}) := x_1\, \psi(x_0, \ldots, x_{m +
      1}); \\ \phi_1 (x_0, \ldots, x_{m + 1}) := \phi(x_0, x_2, \ldots,
    x_{m + 1});\ \phi_2(x_0, \ldots, x_{m+1}) :=
    \phi(x_1, \ldots, x_{m + 1}).
  \end{multline*}
We claim that $$ \psi_1 - \psi_2 = \phi_1 - \phi_2. $$
To see the claim simply set $\tilde x:=(x_2,\dots, x_{m+1})$ and recall from the
definitions that 
$$
\psi(x_0,\ldots ,x_{m+1})(x_0-x_1)=f(x_0,\tilde x)-f(x_1,\tilde x)=\phi(x_0,\tilde
x)-\phi(x_1,\tilde x). 
$$

Note also that, since $h \in \tilde B^1_{\infty 1}$,
by Theorem \ref{PIMpolyEst} and Lemma \ref{PIMandDD}, the operator $T_\psi$ is well-defined
as well as the operators $T_{\psi_1 - \psi_2}$ and $T_{\phi_1 -
\phi_2}$, which are well-defined and satisfy $T_{\psi_1 -
\psi_2}=T_{\psi_1}-T_{\psi_2}$ and $T_{\phi_1 -
\phi_2}=T_{\phi_1}-T_{\phi_2}$ due to \cite[Proposition
4.10]{ACDS}.

Letting~$\tilde V = (V_1, \ldots, V_m)$ and using~(\ref{TphiHomomorphismI}), we
then have
  $$\begin{array}{rl}
    T_{\psi}^{A, B, \tilde H} (A - B, \tilde V)&  = T_{\psi}^{A, B,
      \tilde H} (A, \tilde V) - T_{\psi}^{A, B, \tilde H} (B, \tilde
    V) \\ &= T_{\psi_1}^{A, B, \tilde H} (1, \tilde V) - T_{\psi_2}^{A,
      B, \tilde H} (1, \tilde V) = T_{\psi_1 - \psi_2}^{A, B, \tilde
      H} (1, \tilde V) \\ & = T_{\phi_1 - \phi_2}^{A, B, \tilde H} (1,
    \tilde V) = T_{\phi_1}^{A, B, \tilde H}(1, \tilde V) -
    T_{\phi_2}^{A, B, \tilde H} (1, \tilde V) \\&= T_\phi^{A, \tilde
      H} (\tilde V) - T_\phi^{B, \tilde H} (\tilde V).
  \end{array}$$
\end{proof}

\subsection*{H\"older type estimates for polynomial integral momenta}

In this section, we fix a polynomial integral momentum ~$\phi = \phi_{m, h, Q}$ 
associated with a polynomial~$Q \in
\Pl_m$ and a function~$h \in L^\infty$.  Let also~$\tilde H = (H_1,
\ldots, H_m)$ and let~$V_j \in \cS^{p_j}$, $1\leq p_j\leq \infty$, $j = 1, \ldots, m$ be
fixed.  For a self-adjoint bounded operator~$A$, we shall consider the
mapping $$ F: A \mapsto F(A) := T_\phi^{A, \tilde H} (V_1, \ldots,
V_m). $$

In this section we shall establish H\"older estimates for the
mapping~$F$.  In the special case~$h=f$, $m=0$, $Q\equiv 1$ the Holder
properties of the mapping~$F$ were studied in~\cite[\S5]{AlPe-Sp-2010}.  This
section extends the technique of~\cite[\S5]{AlPe-Sp-2010} to the general 
mappings~$F$.  It should be pointed out that a
vital ingredient in our extension (even when $m=1$) is supplied by
Theorem~\ref{PSStheorem} (see the estimate
concerning the element~$Q_N$ in the proof of Theorem~\ref{HolderInWeak} below).

Recall that~$s_k(U)$ stands for the $k$-th singular number
associated with a compact operator~$U$. The symbol $s(U)$ stands for the sequence $\{s_k(U)\}_{k\ge 1}$. For the purposes of this
section, we introduce the following truncated norm $$ \left\| U
\right\|_{p, \nu} = \left( \sum_{k = 1}^\nu \big( s_k(U) \big)^p
\right)^{\frac
  1p}. $$

The theorem below  estimates the singular values of the operator~$F(A) -
F(B)$.

\begin{theorem}
  \label{HolderInWeak}
 Assume that~$A- B \in \cS^{p_0}$, that\/~$\sum_{j = 0}^m \frac 1{p_j} \leq 1$
  and set $$ U := F(A)-F(B)=T_{\phi}^{A, \tilde H}(V_1, \ldots, V_m) -
  T_{\phi}^{B, \tilde H} (V_1, \ldots, V_m). $$ If\/~$h \in
  \Lambda_\alpha\cap \tilde B^0_{\infty1}$, for some~$0 < \alpha <
1$, then $$ s_k(U) \leq
  \const\, k^{- \frac {1} p}\, \left\| h
  \right\|_{\Lambda_\alpha}\, \left\| A - B\right\|^\alpha_{p_0,
    \nu}\, \left\| V_1 \right\|_{p_1} \cdot \ldots \cdot \left\| V_m
  \right\|_{p_m}, $$
where $\frac 1p = \frac
  \alpha{p_0} + \sum_{j = 1}^m \frac 1{p_j}$ and $\nu\ge \frac k2.$
\end{theorem}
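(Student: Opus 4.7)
The plan is to adapt the strategy of \cite[\S5]{AlPe-Sp-2010}, substantially leveraging the sharp bound of Theorem~\ref{PSStheorem}. First, I would apply a Littlewood--Paley decomposition $h=\sum_{N\in\Z} h_N$ with $h_N=h*W_N$, so that $\supp \hat h_N\subseteq[2^{N-1},2^{N+1}]$; since $h\in\tilde B^0_{\infty 1}$ the series converges suitably, and the H\"older hypothesis yields the classical estimates
\[
\|h_N\|_\infty \leq \const\, 2^{-N\alpha}\,\|h\|_{\Lambda_\alpha}, \qquad \|h_N'\|_\infty \leq \const\, 2^{N(1-\alpha)}\,\|h\|_{\Lambda_\alpha}.
\]
Writing $U=\sum_N U_N$ with $U_N$ the analogue of $U$ in which $\phi$ is replaced by $\phi_N:=\phi_{m,h_N,Q}$, the aim is to estimate each $U_N$ in two complementary ways.

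For the derivative bound, Theorem~\ref{PertubationFormula} applied to the smooth $h_N$ gives $U_N=T^{A,B,\tilde H}_{\psi_N}(A-B,V_1,\ldots,V_m)$ with $\psi_N=\phi_{m+1,h_N',Q_1}$, and Theorem~\ref{PSStheorem} then produces
\[
\|U_N\|_{q_1}\leq \const\, 2^{N(1-\alpha)}\,\|h\|_{\Lambda_\alpha}\,\|A-B\|_{r}\,\prod_{j=1}^m\|V_j\|_{p_j},
\]
valid whenever $\frac{1}{q_1}=\frac{1}{r}+\sum_{j=1}^m\frac{1}{p_j}$ with all indices in $(1,\infty)$. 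For the direct bound, the triangle inequality combined with Theorem~\ref{PSStheorem} applied separately to $T^A_{\phi_N}$ and $T^B_{\phi_N}$ yields
\[
\|U_N\|_{q_2}\leq \const\, 2^{-N\alpha}\,\|h\|_{\Lambda_\alpha}\,\prod_{j=1}^m\|V_j\|_{p_j}, \qquad \frac{1}{q_2}=\sum_{j=1}^m\frac{1}{p_j}.
\]

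To convert these two bounds into a singular-value estimate involving the \emph{truncated} norm $\|A-B\|_{p_0,\nu}^\alpha$, I would decompose $A-B=E'+E''$, where $E'$ is a rank-$\nu$ best approximation to $A-B$ (so that $\|E'\|_{p_0}=\|A-B\|_{p_0,\nu}$) and $E''$ is its tail, satisfying $s_j(E'')=s_{\nu+j}(A-B)$. For each $N$ the derivative bound is applied to the $E'$-perturbation, while the $E''$-perturbation is controlled either via the direct bound or via the derivative bound in a weaker Schatten class, taking advantage of the small operator norm of $E''$. The elementary inequality $s_k(X)\leq k^{-1/q}\|X\|_q$ together with the subadditivity $s_{k_1+k_2-1}(X_1+X_2)\leq s_{k_1}(X_1)+s_{k_2}(X_2)$ then allows one to apportion roughly $k/2$ singular-value indices to each half of $U$. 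Finally, a frequency cutoff $N_0=N_0(k,\|A-B\|_{p_0,\nu})$ is chosen to balance the competing powers $2^{N(1-\alpha)}$ and $2^{-N\alpha}$, and the resulting geometric series in $N$ is summed.

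The principal difficulty is orchestrating this triple optimization---over the frequency cutoff $N_0$, over the rank-$\nu$ splitting of $A-B$ at level $\nu\geq k/2$, and over the distribution of the $k$ singular-value indices between the two halves of $U$---so that the final exponent $\frac{1}{p}=\frac{\alpha}{p_0}+\sum_{j=1}^m\frac{1}{p_j}$ emerges together with the correct geometric factor $\|A-B\|_{p_0,\nu}^\alpha$. This is precisely the point at which the strength of Theorem~\ref{PSStheorem} becomes indispensable: without the $\|h\|_\infty$-type control it affords (considerably sharper than the Besov bound available from \eqref{TphiBasicEst}--\eqref{hatTphiBasicEst}), the $N$-dependent factors would fail to balance and the sharp weak-$L^p$ decay rate $k^{-1/p}$ could not be recovered.
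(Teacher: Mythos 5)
Your skeleton coincides with the paper's: Littlewood--Paley pieces $h_N=h*W_N$, the perturbation formula of Theorem~\ref{PertubationFormula} turning each low-frequency piece of $U$ into an $(m+1)$-linear integral applied to $A-B$, the two competing bounds of order $2^{N(1-\alpha)}$ and $2^{-\alpha N}$, and a cutoff chosen to balance them. But the inequality to be proved is exactly the step you defer as ``the principal difficulty'', and as you have set it up it is both unexecuted and over-engineered. In the paper there is no triple optimization: one writes $U=R_N+Q_N$ with $R_N=\sum_{n\le N}U_n$, $Q_N=\sum_{n>N}U_n$, uses $s_k(X+Y)\le s_{k/2}(X)+s_{k/2}(Y)$ and $s_k(X)\le k^{-1/q}\|X\|_{q,\nu}$ ($k\le\nu$), estimates $\|R_N\|_{r,\nu}\le\const 2^{(1-\alpha)N}\|h\|_{\Lambda_\alpha}\|A-B\|_{p_0,\nu}$ with $\frac1r=\frac1{p_0}+\sum_{j\ge1}\frac1{p_j}$, and $\|Q_N\|_{r_0}\le\const 2^{-\alpha N}\|h\|_{\Lambda_\alpha}$ with $\frac1{r_0}=\sum_{j\ge1}\frac1{p_j}$ (note that $Q_N$ contains no factor $A-B$ at all, so no rank-$\nu$ splitting and no truncation enter there), sums the two geometric series, and then makes the single choice of $N$ determined by $2^{-N-1}\le(k/2)^{-1/p_0}\|A-B\|_{p_0,\nu}<2^{-N}$; the exponent $\frac1p=\frac\alpha{p_0}+\sum_{j\ge1}\frac1{p_j}$ and the factor $\|A-B\|^{\alpha}_{p_0,\nu}$ fall out of this one choice. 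Your decomposition $A-B=E'+E''$ is relevant only to justifying the truncated-norm bound for $R_N$, not to the high-frequency half nor to the choice of cutoff; until the balancing computation is actually carried out, the theorem is not proved.

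A second, more substantive defect is your use of Theorem~\ref{PSStheorem} for the low-frequency (derivative) bound. As you yourself note, that theorem requires all exponents to lie in $(1,\infty)$, whereas the present statement allows $p_j=\infty$ and $p_0=1$ --- precisely the case invoked later, in the second half of the proof of Theorem~\ref{HolderInFull}, where $p_0=1$ and $p_1=\dots=p_m=\infty$. Moreover, converting the full-norm estimate for $T_{\psi_N}(A-B,V_1,\dots,V_m)$ into one involving the truncated norm $\|A-B\|_{p_0,\nu}$ along the lines of your $E'+E''$ split requires the same multilinear bound at the endpoint tuple $(\infty,p_1,\dots,p_m)\to r_0$, so that the tail $E''$, whose operator norm is at most $\nu^{-1/p_0}\|A-B\|_{p_0,\nu}$, can be absorbed; Theorem~\ref{PSStheorem} does not supply that endpoint. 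The paper's route has neither problem: since $\widehat{h_N'}$ is supported in $[2^{N-1},2^{N+1}]$, Theorem~\ref{PIMpolyEst}(i) combined with \eqref{TphiBasicEst} already gives $\|h_N'\|_\infty$-control of $T_{\psi_N}$, uniformly over all exponent tuples in $[1,\infty]$ (your remark that \eqref{TphiBasicEst} only yields a Besov-type bound is accurate for the undecomposed $h$, but not for the individual dyadic pieces), and Theorem~\ref{PSStheorem} is reserved for the high-frequency pieces forming $Q_N$. With your choice of tools the argument would not close in the stated generality, even after the missing optimization is supplied.
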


\begin{proof}[Proof of Theorem~\ref{HolderInWeak}]
  Assume for simplicity that $$ \left\| V_1 \right\|_{p_1} = \ldots =
  \left\| V_m\right\|_{p_m} = 1. $$ Let~$W_n$ be the Schwartz function
  from the definition of the Besov spaces (see~(\ref{WnFuncs})).  For every
$n\in \Z$, we set
$$h_n := W_n * h,\  \phi_n := \phi_{m, h_n, Q},\ U_n :=
  T_{\phi_n}^{A, \tilde H}(V_1, \ldots, V_m) - T_{\phi_n}^{B, \tilde
    H} (V_1, \ldots, V_m). $$
Here, we justify the existence of the operator  $ T_{\phi_n}$ by
appealing to Theorem \ref{PIMpolyEst}(i). We fix~$N \in \Z$
 (the choice of~$N$ will be specified later) and set
$$ R_N := \sum_{n \leq N} U_n \ \  \text{and}\ \ Q_N: = \sum_{n > N} U_n. $$
We claim  that $$U=R_N+Q_N.$$
%

Since $h \in \tilde B^0_{\infty1}$, it follows from the definition
of the norm of the Besov space $\tilde B^0_{\infty1}$ that the
series $\sum_{n\in\mathbb Z} h_n$ converges uniformly. Noting that
the latter series consists of continuous (in fact smooth and
rapidly decreasing at $\infty$) functions, we conclude that it
also converges in the space of all continuous functions on
$\mathbb{R}$. It follows (see also \cite{AlPe-Sp-2010}) that $\sum_{n\in\mathbb
Z}(h*W_n)=h$ and
so for $\tilde s=(s_1,\ldots ,s_m)$ we have
\begin{align*}
 \phi(x_0,\ldots ,x_m)&=\int_{S_m} Q\left(
  \tilde s \right)\, h \left( \sum_{j = 0}^m s_j x_j \right)
\, d\sigma_m=\int_{S_m} Q\left(
  \tilde s \right)\, \sum_{n\in\mathbb Z}(h*W_n) \left( \sum_{j = 0}^m s_j x_j
\right)
\, d\sigma_m\\  &=
   \int_{S_m} Q\left(
  \tilde s \right)\, \sum_{n\in\mathbb Z}h_n \left( \sum_{j = 0}^m s_j x_j
\right)
\, d\sigma_m=\sum_{n\in\mathbb Z}\int_{S_m} Q\left(
  \tilde s \right)\, h_n \left( \sum_{j = 0}^m s_j x_j \right)
\, d\sigma_m\\ &=\sum_{n\in\mathbb Z}\phi_n(x_0,\ldots ,x_m).
\end{align*}
Now, we arrive at the claim as follows
\begin{align*}U &= T_{\phi}^{A, \tilde H}(V_1, \ldots, V_m) -
  T_{\phi}^{B, \tilde H} (V_1, \ldots, V_m)\\ &=T_{\sum_{n\in\mathbb
Z}\phi_n}^{A, \tilde H}(V_1, \ldots, V_m) - T_{\sum_{n\in\mathbb Z}\phi_n}^{B,
\tilde
    H} (V_1, \ldots, V_m)\\&=
\sum_{n\in\mathbb Z}(T_{\phi_n}^{A, \tilde H}(V_1, \ldots, V_m)
-T_{\phi_{n}}^{B, \tilde H} (V_1, \ldots, V_m))\\ &=\sum_{n\in\mathbb
Z}U_n=R_N+Q_N.
\end{align*}
Here, the step from the second to the third line above is justified as follows.
Firstly, we note that the series $\phi=\sum_{n\in \Z} \phi_n$
converges also in the norm  $\|\cdot \|_{\cA_m}$. This convergence follows from
the already used above fact that $\sum _{n\in \mathbb Z}\|h_n\|_\infty<\infty$ combined with
Theorem \ref{PIMpolyEst}(i). Hence, appealing to \eqref{TphiBasicEst}, we infer
that $T_\phi=\sum_{n\in \Z} T_{\phi_n}$
(in the sense of the strong operator topology).

Observing the following  elementary properties of singular values $$ s_k(U + V)
\leq s_{\frac
    k2} (U) + s_{\frac k2}(V) \ \ \text{and}\ \ s_k(U) \leq k^{- \frac
    1p} \left\| U \right\|_{p, \nu},\ \ k \leq \nu, $$ we see
  \begin{equation}
    \label{HolderInWeakTmp}
    s_k (U) \leq s_{\frac k2} (R_N) + s_{\frac k2} (Q_N)
    \leq {\Big(\frac k2\Big)}^{-\frac 1r} \left\| R_N\right\|_{r, \nu} +
{\Big(\frac k2\Big)}^{-\frac 1{r_0}}
    \left\| Q_{N} \right\|_{r_0}, \quad {\frac k2\le
    \nu,}
  \end{equation}
where~$r_0^{-1} = \sum_{j = 1}^m p_j^{-1}$ and~$r^{-1} = p_0^{-1} +
  r_0^{-1}$.  We now estimate~$R_N$ and~$Q_N$ separately.

  We estimate~$R_N$ as follows.  Observe that by
  Theorem~\ref{PertubationFormula}, $$ U_n = T_{\psi_n}^{A, B, \tilde H} (A-B,
  V_1, \ldots, V_m), $$ where~$\psi_n = \phi_{m+1, h'_n, Q}$ is the
  polynomial integral momentum of order~$m + 1$ associated with the
  function~$h'_n = h' * W_n$, where we view $h'$ as a generalized function
(the preceding equality follows immediately from the definition $h_n := W_n *
h$).
Since $\widehat{h'_n}(\xi)=2\pi i\xi \widehat{h}_n(\xi)$, we readily infer from 
Lemma~\ref{PIMpolyLemma}
  part~(\ref{PIMpolyLemma:hm}) with $N=n$ and $m=1$ that
\begin{equation}\label{temporary3} \left\| h'_n
  \right\|_\infty \leq \const\, 2^n \left\| h_{ n} \right\|_\infty.
  \end{equation}
It is also known as a combination of
\cite[Proposition 7]{Stein} and \cite[Corollary 2]{Triebel-1982}
that
  \begin{equation}
    \label{HolderHarmonicDef}
    2^{\alpha n} \left\| h_n \right\|_\infty \leq
    \const\,\left\| h \right\|_{\Lambda_\alpha}.
  \end{equation}
Combining \eqref{HolderHarmonicDef} with \eqref{TphiBasicEst} and
Theorem~\ref{PIMpolyEst} part~(i) we see that
\begin{align*}
\| U_n\|_{r,\nu}&=\|T_{\psi_n}^{A, B,\tilde H} (A-B,
  V_1, \ldots, V_m)
\|_{r,\nu}\\&\le\|T_{\psi_n}^{A, B,\tilde H}\|_{\tilde p\to
r}\|A-B\|_{p_0,\nu}\|V_1\|_{p_1}\ldots \|V_m\|_{p_m}\\&\le
\const\, 2^n\, \left\| h_n
  \right\|_\infty\|A-B\|_{p_0,\nu}\\ &\leq \const\,
  2^{(1-\alpha)n} \, \left\| h \right\|_{\Lambda_\alpha}\, \left\| A -
    B \right\|_{p_0, \nu}.
 \end{align*}
Noting that $\sum_{n\le
N}2^{(1-\alpha)n}=
\const\,2^{(1-\alpha)N},$ we obtain $$ \left\| R_N \right\|_{r, \nu}
  \leq \sum_{n \leq N} \left\| U_n \right\|_{r, \nu} \leq \const\,
  2^{(1-\alpha)N}\, \left\| h \right\|_{\Lambda_\alpha}\, \left\| A -
    B\right\|_{p_0, \nu}. $$
In order to estimate~$Q_N$, we combine Theorem~\ref{PSStheorem} (see the
comments following the statement of Theorem ~\ref{PSStheorem},
which explain why we are in a position to identify operators $T_\phi$ and $\hat
T_\phi$) and~(\ref{HolderHarmonicDef}) as follows
\begin{align*}
    \left\| U_n \right\|_{r_0} &\leq \left\| T^{A, \tilde
        H}_{\phi_n} (V_1, \ldots, V_m) \right\|_{r_0} + \left\| T^{B,
        \tilde H}_{\phi_n} (V_1, \ldots, V_m) \right\|_{r_0} \\ &\leq \const
    \left\| h_n \right\|_\infty \leq \const 2^{-\alpha n}\, \left\| h
    \right\|_{\Lambda_\alpha}.
\end{align*}

Consequently, $$ \left\| Q_N
  \right\|_{r_0} \leq \sum_{n > N} \left\| U_n \right\|_{r_0} \leq
  \const\, 2^{-\alpha N}\, \left\| h \right\|_{\Lambda_\alpha}. $$

  Returning back to~(\ref{HolderInWeakTmp}), we arrive at $$ s_k(U)
  \leq \const\, 2^{-\alpha N}\, \left\| h \right\|_{\Lambda_\alpha}\,
  \Big(\frac k2\Big)^{-\frac 1r_0}\, \left[ \Big(\frac k2\Big)^{-\frac 1{p_0}}
2^{N}\, \left\| A - B
    \right\|_{p_0, \nu} + 1 \right]. $$ The proof can now be finished by
  choosing~$N \in \Z$ such that

\begin{equation}\label{twosided} 2^{-N - 1} \leq \Big(\frac k2\Big)^{- \frac
    1{p_0}}\, \left\| A - B \right\|_{p_0, \nu} < 2^{-N}.
\end{equation}


Indeed, suppose $N$ is such as above. Then rewriting the preceding estimate, we
have
\begin{align*} s_k(U)
  &\leq \const\, 2^{-\alpha N}\, \left\| h \right\|_{\Lambda_\alpha}\,
  \Big(\frac k2\Big)^{-\frac 1r_0}\, \left( \Big(\frac k2\Big)^{-\frac 1{p_0}}
\, \left\| A - B
    \right\|_{p_0, \nu} 2^{N} + 1 \right)\\&\le \const\, 2^{-\alpha N}\, \left\|
h \right\|_{\Lambda_\alpha}\,
  \Big(\frac k2\Big)^{-\frac 1r_0}\, \left( 2^{-N} 2^{N} + 1 \right)
  \\&=\const\, 2^{1+\alpha}\,2^{\alpha (-N-1)}\, \left\| h
\right\|_{\Lambda_\alpha}\,
  \Big(\frac k2\Big)^{-\frac 1r_0}\\& \le \const\, 2^{1+\alpha}\Big(\frac
k2\Big)^{- \frac
    \alpha{p_0}}\, \left\| A - B \right\|_{p_0, \nu}^\alpha\left\| h
\right\|_{\Lambda_\alpha}\,
  \Big(\frac k2\Big)^{-\frac 1r_0}\\&=\const\, k^{- \frac
    1{p}}\, \left\| A - B \right\|_{p_0, \nu}^\alpha\left\| h
\right\|_{\Lambda_\alpha},
\end{align*}
where we used firstly the right hand side from \eqref{twosided} and then its
left hand side, and, in the last step, the equalities $\frac 1p = \frac
  \alpha{p_0} + \sum_{j = 1}^m \frac 1{p_j}$ and $\frac 1{r_0} = \sum_{j = 1}^m
\frac
  1{p_j}$.
\end{proof}

Recall that~$S^{p, \infty}$, $1 \leq p < \infty$ stands for the weak
Schatten-von Neumann quasi-normed ideal defined by the relation $$
\left\| U \right\|_{p, \infty} := \sup_{k \geq 1} k^{\frac 1p} s_k(U)
< + \infty. $$ Letting~$\nu \rightarrow \infty$, we also have the
corollary.

\begin{corollary}
  \label{HolderInWeakI}
  In the setting of Theorem~\ref{HolderInWeak}, we have $U \in
  S^{p, \infty}$ and $$ \left\| U \right\|_{p, \infty} \leq \const
  \left\| h \right\|_{\Lambda_\alpha}\, \left\| A -
    B\right\|^\alpha_{p_0}\, \left\| V_1 \right\|_{p_1} \cdot \ldots
  \cdot \left\| V_m \right\|_{p_m}, $$ where~$\frac 1p = \frac
  \alpha{p_0} + \sum_{j = 1}^m \frac 1{p_j}$.
\end{corollary}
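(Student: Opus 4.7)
The plan is to deduce the corollary directly from Theorem~\ref{HolderInWeak} by passing to the limit $\nu \to \infty$ in the truncated estimate, and then recognizing the resulting bound as a weak-$\mathcal{S}^p$ norm estimate.

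First, fix $k \geq 1$ and choose any integer $\nu \geq k/2$. Theorem~\ref{HolderInWeak} delivers
\[
s_k(U) \leq \const\, k^{-1/p}\, \|h\|_{\Lambda_\alpha}\, \|A-B\|^{\alpha}_{p_0,\nu}\, \|V_1\|_{p_1}\cdots\|V_m\|_{p_m},
\]
with a constant independent of $\nu$ and $k$. Since $A-B \in \mathcal{S}^{p_0}$, the monotone sequence $\|A-B\|_{p_0,\nu}$ is bounded above by $\|A-B\|_{p_0}$ and in fact converges to it as $\nu \to \infty$. Letting $\nu \to \infty$ on the right-hand side while keeping $k$ fixed therefore yields
\[
s_k(U) \leq \const\, k^{-1/p}\, \|h\|_{\Lambda_\alpha}\, \|A-B\|^{\alpha}_{p_0}\, \|V_1\|_{p_1}\cdots\|V_m\|_{p_m}.
\]

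Since $k \geq 1$ was arbitrary, multiplying by $k^{1/p}$ and taking the supremum over $k$ gives, by the very definition of $\|\cdot\|_{p,\infty}$, both that $U \in \mathcal{S}^{p,\infty}$ and the claimed estimate. There is no real obstacle here: once the scale-refined estimate of Theorem~\ref{HolderInWeak} is in hand, the only point to check is that the truncated quasi-norm $\|A-B\|_{p_0,\nu}$ can be replaced by $\|A-B\|_{p_0}$ in the limit, which is immediate from the definition of the Schatten norm.
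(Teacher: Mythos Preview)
Your proposal is correct and follows exactly the paper's own approach: the paper simply states ``Letting~$\nu \rightarrow \infty$, we also have the corollary,'' and you have spelled out precisely this passage from the truncated quasi-norm $\|A-B\|_{p_0,\nu}$ to the full norm $\|A-B\|_{p_0}$, followed by taking the supremum over~$k$.
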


Finally, the H\"older estimate for the mapping~$F$ is given below.

\begin{theorem}
  \label{HolderInFull}
  In the setting of Theorem~\ref{HolderInWeak}, if\/~$\sum_{j = 0}^m
  \frac 1 {p_j} < 1$, then~$U \in \cS^p$ and $$ \left\|
    U \right\|_{p} \leq \const\, \left\| h \right\|_{\Lambda_\alpha}\,
  \left\| A - B \right\|_{p_0}^\alpha\, \left\| V_1 \right\|_{p_1}
  \cdot \ldots \cdot \left\| V_m \right\|_{p_m}, $$ where~$\frac 1p =
  \frac \alpha{p_0} + \sum_{j = 1}^m \frac 1{p_j}$.
\end{theorem}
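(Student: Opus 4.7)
The plan is to upgrade Corollary~\ref{HolderInWeakI} from the weak $S^{p,\infty}$-bound to a full $S^p$-bound, exploiting the strict hypothesis $\sum_{j=0}^m 1/p_j < 1$. Setting $1/p_* := \sum_{j=0}^m 1/p_j$ (so that $p_* > 1$) and $1/r_0 := \sum_{j=1}^m 1/p_j$, a direct computation gives the interpolation identity $1/p = \alpha/p_* + (1-\alpha)/r_0$, placing $p$ strictly between $p_*$ and $r_0$.

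First I would reuse the dyadic decomposition $h = \sum_{n\in\Z} h_n$ from the proof of Theorem~\ref{HolderInWeak}, producing $U = \sum_n U_n$ with $U_n = T_{\psi_n}^{A,B,\tilde H}(A-B, V_1, \ldots, V_m)$. The new input from the hypothesis is that Theorem~\ref{PSStheorem} now applies to the $(m+1)$-linear operator $T_{\psi_n}$ with indices $(p_0, p_1, \ldots, p_m)$ (valid precisely because $1/p_* < 1$), yielding
\[
\|U_n\|_{p_*} \le \const \cdot 2^{(1-\alpha)n}\, \|h\|_{\Lambda_\alpha}\, \|A-B\|_{p_0}\, \prod_{j=1}^m \|V_j\|_{p_j}.
\]
The parallel estimate $\|U_n\|_{r_0} \le \const\cdot 2^{-\alpha n}\, \|h\|_{\Lambda_\alpha}\,\prod_{j}\|V_j\|_{p_j}$ follows from Theorem~\ref{PSStheorem} applied to $T_{\phi_n}$, exactly as in the proof of Theorem~\ref{HolderInWeak}. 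These provide two endpoint estimates of each $U_n$, in the Schatten classes $S^{p_*}$ and $S^{r_0}$.

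For any $N\in\Z$, splitting $U = R_N + Q_N$ with $R_N = \sum_{n\le N} U_n$ and summing the geometric series gives $\|R_N\|_{p_*} \le \const \cdot 2^{(1-\alpha)N}\|A-B\|_{p_0}$ and $\|Q_N\|_{r_0} \le \const \cdot 2^{-\alpha N}$ (absorbing the factor $\|h\|_{\Lambda_\alpha}\prod_j\|V_j\|_{p_j}$). Optimizing over $N$ then yields a K-functional bound for the couple $(S^{p_*}, S^{r_0})$ of the form $K(t, U) \le \const \cdot t^{1-\alpha}\, \|A-B\|_{p_0}^\alpha$; the target $S^p$-estimate would then follow from the real-interpolation identity $(S^{p_*}, S^{r_0})_{1-\alpha, p} = S^p$, verified by the equality $1/p = \alpha/p_* + (1-\alpha)/r_0$ established above.

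The main obstacle is this last step: the naive K-estimate produces an integrand $t^{-(1-\alpha)}K(t,U)$ which is merely bounded, so the K-integral $\int_0^\infty [t^{-(1-\alpha)}K(t,U)]^p\, dt/t$ defining the $S^p$-norm diverges logarithmically. Overcoming this requires refining the K-functional estimate using the truncated-norm singular-value bound of Theorem~\ref{HolderInWeak} (with $\nu = k/2$), so as to obtain K-functional decay strictly faster than $t^{1-\alpha}$ at the scales that matter. This is the sole delicate step; once completed, the desired inequality $\|U\|_p \le \const \|h\|_{\Lambda_\alpha}\,\|A-B\|_{p_0}^\alpha\,\prod_j \|V_j\|_{p_j}$ follows directly from the real-interpolation identity.
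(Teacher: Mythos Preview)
Your proposal has a genuine gap, which you yourself flag: the K-functional bound $K(t,U;\cS^{p_*},\cS^{r_0}) \le \const\cdot t^{1-\alpha}\|A-B\|_{p_0}^\alpha$ only places $U$ in the Lorentz-type space $(\cS^{p_*},\cS^{r_0})_{1-\alpha,\infty}=\cS^{p,\infty}$, i.e., it recovers Corollary~\ref{HolderInWeakI} and nothing more. Your proposed remedy---to sharpen the K-functional using the truncated-norm bound from Theorem~\ref{HolderInWeak}---is not carried out, and it is not clear how it would cure the logarithmic divergence in general: the truncated norm $\|A-B\|_{p_0,\nu}$ carries information about $A-B$, not about the $V_j$'s, so when $r_0<\infty$ there is no obvious way to convert it into the needed extra decay of $K(t,U)$.

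The paper's argument avoids this difficulty by changing what is being interpolated. It splits into two cases. If some $p_j<\infty$ (say $p_1$), one freezes $A,B,V_2,\ldots,V_m$ and regards $V_1\mapsto U$ as a \emph{linear operator}. Corollary~\ref{HolderInWeakI} then gives two weak-type operator bounds $\cS^\infty\to\cS^{r_0,\infty}$ and $\cS^{\tilde p_1}\to\cS^{r_1,\infty}$, and real interpolation of operators (Marcinkiewicz-style) upgrades these to the strong bound $\cS^{p_1}\to\cS^p$. The point you are missing is precisely this: interpolating an operator between two weak endpoints yields a strong intermediate bound, whereas interpolating a single element with a bare power K-functional estimate does not. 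If instead every $p_j=\infty$ for $j\ge 1$, the paper applies Theorem~\ref{HolderInWeak} with $p_0=1$ and $\nu=k$ to obtain $s_k(|U|^{1/\alpha})\le \const\cdot k^{-1}\sum_{n\le k}s_n(A-B)$, and then invokes boundedness of the Ces\`aro operator on $\ell_{p_0}$. This second case is the one situation where the truncated-norm refinement you allude to actually closes the argument, but it requires $r_0=\infty$.
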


\begin{proof}[Proof of Theorem~\ref{HolderInFull}]
We shall consider two mutually exclusive situations. Firstly, we assume that there is~$p_j$ ($1 \leq j \leq m$) such that~$p_j <
  \infty$.  In this case the claim follows from the real interpolation
  method directly.  Indeed, assume for simplicity that~$p_1 < \infty$.
  Let~$A-B \in \cS^{p_0}$ and~$V_j \in \cS^{p_j}$, $j = 2, \ldots m$
  be fixed such that $$ \left\| V_j \right\|_{p_j} = 1,\ \ j = 2,
  \ldots, m. $$ Let $$ T(V) = T_{\phi}^{A, \tilde H}(V, V_2, \ldots,
  V_m) - T_{\phi}^{B, \tilde H} (V, V_2, \ldots, V_m). $$
Applying Corollary~\ref{HolderInWeakI} with ~$\frac 1 {r_0} = \frac \alpha{p_0}
+ \sum_{j = 2}^m \frac
  1{p_j}$ and with~$\frac {1}{r_1} = \frac {1}{r_0} + \frac {1}{\tilde p_1}$,
~$\frac {1}{\tilde p_1}: = 1 - \sum_{j = 2}^m \frac {1}{p_j}$, we have
respectively
  \begin{multline*}
    \left\| T(V) \right\|_{r_0, \infty} \leq \const\,{\left\| h
\right\|_{\Lambda_\alpha}} \left\| A - B
    \right\|_{p_0}^\alpha\, \left\| V \right\|_{\infty} \ \
    \text{and}\\ \left\| T(V) \right\|_{r_1, \infty} \leq \const\,
    {\left\| h \right\|_{\Lambda_\alpha}}\left\| A - B \right\|_{p_0}^\alpha\,
\left\| V \right\|_{\tilde
      p_1}.
\end{multline*}
 Observe that $\frac 1{p_1}<\frac1{\tilde p_1}$ and hence, $0<\theta:=\frac
{\tilde p_1}{p_1}<1$.
  Applying the real interpolation method~$\left[\cdot,\cdot \right]_{\theta, p}$
to the quasi-Banach pair $(\cS^{r_0, \infty}, \cS^{r_1, \infty})$, we conclude
the proof.

Now, we assume now that~$p_j = \infty$ for every~$j = 1, \ldots, m$.  In
  this case, the proof is similar to the argument
  used in~\cite[Theorem~5.8]{AlPe-Sp-2010}.  Assume for simplicity
  that $$ \left\| V_j \right\|_{\infty} = 1,\ \ j = 1, \ldots, m. $$
Applying Theorem ~\ref{HolderInWeak} with $p_0=1$ (so $\frac 1p=\alpha$) and $p_j=\infty$ for all $1\le j\le m,$ we have
$$
s_k(U)\leq \const \left\| h \right\|_{\Lambda_\alpha}\left(\frac {1}{k}\right)^\alpha \|A-B\|_{1,\nu}^\alpha,\ \forall \nu\ge \frac k2,
$$
or equivalently, 
$$
s_k^{\frac{1}{\alpha}}(U)\leq \const\, \left\| h \right\|_{\Lambda_\alpha}^{\frac{1}{\alpha}}\,\frac {1}{k} \|A-B\|_{1,\nu},\ \forall \nu\ge \frac k2.
$$
In particular, setting $\nu=k$, we obtain
$$ s_k(\left| U \right|^{\frac{1}{\alpha}}) \leq \const\,\left\| h \right\|_{\Lambda_\alpha}^{\frac{1}{\alpha}}\,\frac
1k\sum _{n=1}^k s_n(A-B).$$
Considering C\' esaro operator $C$ on the space 
$l_\infty$ of all bounded sequences $x=\{x_n\}_{n\ge 1}$ given by the formula
$$(Cx)_k:=\frac 1k\sum _{n=1}^k x_n, \ k\ge 1,
$$
we may interpret  the preceding estimate as 
$$
s(|U|^{\frac{1}{\alpha}})\leq \const\,\left\| h \right\|_{\Lambda_\alpha}^{\frac{1}{\alpha}} Cs(A-B).
$$
Recalling that the operator $C$ maps the space $l_{p_0}$ into itself (for every $1<p_0\leq \infty$) and that, by the assumption, $A-B\in \cS^{p_0}$, we obtain that
$  Cs(A-B)\in l_{p_0}$ and therefore $|U|^{\frac{1}{\alpha}}\in \cS^{p_0}$, or equivalently $|U|^{\frac{p_0}{\alpha}}=|U|^p\in \cS^{1}$ 
(indeed, in our current setting we have $\alpha p=p_0$) and furthermore
$$
\||U|^{p}\|_1\leq \const\, \left\| h \right\|_{\Lambda_\alpha}^{\frac{p_0}{\alpha}}\,\|Cs(A-B)\|_{p_0}^{p_0}
\leq \const\, \left\| h \right\|_{\Lambda_\alpha}^{\frac{p_0}{\alpha}}\,\|A-B\|_{p_0}^{p_0},
$$
which is equivalent to the claim.
\end{proof}

{ \begin{remark}
  \label{HolderInFullAlphaOne}
We observe that the assertion of Theorem~\ref{HolderInFull} also holds when~$\alpha = 1$ (in this case, we speak of Lipschitz functions rather than 
H\" older functions with exponent $\alpha$).  However, the proof of this case is based on
  totally different ideas.  In fact, this case is justified by
  Theorem~\ref{PSStheorem}.
\end{remark}
}
\section{Proof of the main result}
\label{sec:taylor-proof}

In this section, we consider $\cS^p$, $1 \leq p \leq \infty$ as a Banach space over the field $\Rl$ of real
numbers.

\begin{theorem}
  \label{TaylorExpansion}
  If $1 < p < \infty$ and if~$m \in \N$ is such
  that~$m <p \leq m + 1$, then for every ~$H \in \cS^p$, $\|H\|_p\leq 1$, there exist bounded
symmetric polylinear forms
  \begin{equation*}
    \dfun{1} : \cS^p \mapsto \Rl,\ \
    \dfun{2} : \cS^p \times \cS^p \mapsto \Rl,\ \
    \ldots,\ \
    \dfun{m}: \underbrace{\cS^p \times \ldots \times
      \cS^p}_{\text{$m$-times}} \mapsto \Rl
  \end{equation*}
  such that
  \begin{equation}
    \label{eq:TaylorExpansion1}
    \left\| H + V \right\|_p^p - \left\| H \right\|_p^p -
    \sum_{k = 1}^m \dfun{k} \Bigl(\underbrace{V, \ldots,
      V}_{\text{$k$-times}}\Bigr) = O(\left\| V \right\|_p^p),
  \end{equation}
  where~$V \in \cS^p$ and~$\left\| V \right\|_p \rightarrow 0$.
\end{theorem}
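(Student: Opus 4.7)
I would pass to the self-adjoint setting, identify $\|H\|_p^p$ with $\tr\tilde f(H)$ for $\tilde f$ a compactly supported replacement of $f(x)=|x|^p$, expand $\tilde f(H+V)-\tilde f(H)$ to order $m$ via iterated use of Theorem~\ref{PertubationFormula}, and estimate the resulting remainder by the H\"older-type estimate of Theorem~\ref{HolderInFull} together with its Lipschitz extension, Remark~\ref{HolderInFullAlphaOne}.

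For arbitrary $H,V\in\cS^p$, I pass to the $2\times2$ operator matrices $\widetilde H=\bigl(\begin{smallmatrix}0&H\\H^*&0\end{smallmatrix}\bigr)$ and $\widetilde V=\bigl(\begin{smallmatrix}0&V\\V^*&0\end{smallmatrix}\bigr)$ on $\mathcal H\oplus\mathcal H$; these are self-adjoint and satisfy $\|\widetilde H\|_p^p=2\|H\|_p^p$, so the theorem reduces to the case of self-adjoint $H,V$ (with the polylinear forms transferred from $\widetilde H$ back to $H$ by pull-back along $V\mapsto\widetilde V$). Since $\|H\|_\infty\le\|H\|_p\le1$ and $\|V\|_\infty\le\|V\|_p$ is small, the spectra of $H$ and $H+V$ stay in a fixed compact interval, so I may replace $f(x)=|x|^p$ by a compactly supported $\tilde f\in C^m(\mathbb R)$ coinciding with $f$ on that interval. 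One checks $\tilde f^{(m)}\in\Lambda_{p-m}$ with $p-m\in(0,1]$; hence $\tilde f\in\tilde B^m_{\infty 1}$ by Lemma~\ref{HolderToBesov}, and each $T_{\tilde f^{[k]}}$ for $1\le k\le m$ is well defined by Theorem~\ref{DDImpl}(i).

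Starting from the identity $\tilde f(H+V)-\tilde f(H)=T_{\tilde f^{[1]}}^{H+V,H}(V)$ and iterating Theorem~\ref{PertubationFormula} (applicable since $\tilde f^{(k)}\in\tilde B^1_{\infty 1}$ for $0\le k\le m-1$), then splitting the final term as $T_{\tilde f^{[m]}}^{H+V,H,\ldots,H}(V,\ldots,V)=T_{\tilde f^{[m]}}^{H,\ldots,H}(V,\ldots,V)+R(H,V)$ with
$$R(H,V):=T_{\tilde f^{[m]}}^{H+V,H,\ldots,H}(V,\ldots,V)-T_{\tilde f^{[m]}}^{H,\ldots,H}(V,\ldots,V),$$
I arrive at
$$\tilde f(H+V)-\tilde f(H)=\sum_{k=1}^{m}T_{\tilde f^{[k]}}^{H,\ldots,H}(V,\ldots,V)+R(H,V).$$
Taking traces and symmetrizing, I set
$$\delta^{(k)}_H(V_1,\ldots,V_k):=\frac{1}{k!}\sum_{\sigma\in S_k}\tr\,T_{\tilde f^{[k]}}^{H,\ldots,H}(V_{\sigma(1)},\ldots,V_{\sigma(k)}).$$
Well-definedness and the bound $|\delta^{(k)}_H(V_1,\ldots,V_k)|\le\const\,\prod_j\|V_j\|_p$ follow from the fact that $\tilde f^{[k]}(x_0,\ldots,x_k)$ carries an implicit growth of $|x|^{p-k}$; factoring such a power out of one of the slots via the algebraic identity~(\ref{TphiHomomorphismI}) places the operator integral into $\cS^1$ by Theorem~\ref{DDImpl}(i) and H\"older's inequality. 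Real-valuedness is automatic because $t\mapsto\tr\tilde f(H+tV)$ is real-valued, and symmetry in $V_1,\ldots,V_k$ is built in by the explicit symmetrization.

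The heart of the proof is the remainder estimate $|\tr R(H,V)|\le\const\,\|V\|_p^p$. Applying Theorem~\ref{HolderInFull} to $A\mapsto T_{\tilde f^{[m]}}^{A,H,\ldots,H}(V,\ldots,V)$ with H\"older exponent $\alpha=p-m$ and indices $p_0=p_1=\cdots=p_m=p$ yields the target space $\cS^q$ with $q^{-1}=\alpha/p+m/p=1$ and right-hand side $\const\,\|V\|_p^{p-m}\|V\|_p^m=\const\,\|V\|_p^p$, which is exactly what is needed. The main technical obstacle is that Theorem~\ref{HolderInFull} is stated only under the strict inequality $\sum_{j=0}^{m}p_j^{-1}<1$, i.e., $p>m+1$, whereas our range $m<p\le m+1$ saturates this condition at $p=m+1$ and violates it for $m<p<m+1$. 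At $p=m+1$, $\tilde f^{(m)}$ is Lipschitz and Remark~\ref{HolderInFullAlphaOne} (grounded in Theorem~\ref{PSStheorem}) supplies the required $\cS^1$ bound directly. For non-integer $p\in(m,m+1)$, I expect the $\cS^1$-bound to emerge by combining the weak-$\cS^{1,\infty}$ estimate furnished by Corollary~\ref{HolderInWeakI} at the saturated indices with a genuine $\cS^q$-estimate ($q>1$) obtained from Theorem~\ref{HolderInFull} after raising $p_0$ slightly above $p$ (permissible since $V\in\cS^p\subset\cS^{p_0}$), and then real-interpolating between these two bounds to land in $\cS^1$ with the desired $O(\|V\|_p^p)$ control on $\|R(H,V)\|_1$.
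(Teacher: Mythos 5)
Your outline follows the paper's general route (self-adjoint reduction, smoothing of $|x|^p$, iteration of Theorem~\ref{PertubationFormula}, H\"older-type remainder bound), but it breaks down at the two places where the real work lies. First, your functionals are the ``naive'' ones, $\delta^{(k)}_H(V_1,\dots,V_k)=\frac{1}{k!}\sum_\sigma\tr T^H_{f^{[k]}}(V_{\sigma(1)},\dots,V_{\sigma(k)})$, and the paper explicitly points out why this is problematic: Theorem~\ref{DDImpl}(i) only gives $T^H_{f^{[k]}}(V_1,\dots,V_k)\in\cS^{p/k}$ with $p/k>1$, so it is not known that these operators are trace class, and hence not clear the traces exist or define bounded forms on $(\cS^p)^k$. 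Your one-sentence fix --- ``factor the implicit growth $|x|^{p-k}$ out of one slot via~(\ref{TphiHomomorphismI})'' --- does not work as stated: (\ref{TphiHomomorphismI}) applies to integer monomial (or bounded Borel) factors of the separate variables, while $|x|^{p-k}$ is not a polynomial for non-integer $p$, and after dividing it out you are left with a symbol for which no $\cA_k$/$\cC_k$-norm control or Theorem~\ref{PSStheorem}-type bound is available (note also that Theorem~\ref{PSStheorem} requires the target index to be strictly greater than $1$, so you cannot land in $\cS^1$ this way). This is precisely the difficulty the paper resolves by replacing $f^{[k]}$ with the shifted momentum $\tilde f^{[k]}_p$ (so that one factor $V$ stays outside the operator integral), and by proving Theorem~\ref{DeltaDefVerified} via the analytic family $f_z$, the polynomial expansion of $|s_0x_0+\cdots|^{2m}$ (which is where (\ref{TphiHomomorphismI}) legitimately enters, with even integer powers), and the complex interpolation Lemmas~\ref{CIlemmaReplacement} and~\ref{DeltaDefVerifiedLemmaAux}. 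None of that is replaced by your sketch, so well-definedness and boundedness of your $\delta^{(k)}_H$ is a genuine gap.

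Second, your remainder estimate does not close. You correctly observe that applying Theorem~\ref{HolderInFull} to the $m$-linear integral $T^{A,H,\dots,H}_{f^{[m]}}$ with $p_0=\cdots=p_m=p$ and $\alpha=p-m$ violates (or saturates) the hypothesis $\sum_{j=0}^m p_j^{-1}<1$, but your repairs fail: at $p=m+1$ the target is $\cS^1$, which is outside the scope of Remark~\ref{HolderInFullAlphaOne} since its engine, Theorem~\ref{PSStheorem}, requires target index $>1$; and for $m<p<m+1$, real interpolation between a weak-$\cS^{1,\infty}$ bound (Corollary~\ref{HolderInWeakI}) and an $\cS^q$ bound with $q>1$ can only produce ideals $\cS^{r}$ (or Lorentz ideals) with $r>1$ --- $\cS^1$ is an endpoint of that couple, not an intermediate space, and the trace is not even bounded on $\cS^{1,\infty}$. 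The paper sidesteps the endpoint entirely: its remainder is $\int_0^1 t^{m-1}\tr\bigl(V\,[T^{H_t,H_0}_{\tilde f^{[m]}_p}-T^{H_0}_{\tilde f^{[m]}_p}](V,\dots,V)\bigr)dt$, where the operator integral is only $(m-1)$-linear, so Theorem~\ref{HolderInFull} (and Remark~\ref{HolderInFullAlphaOne} when $p=m+1$) applies with $\sum_{j=0}^{m-1}p_j^{-1}=m/p<1$, lands in $\cS^{p'}$ with $p'>1$, and the final factor $\|V\|_p$ comes from H\"older against the leftover $V$, giving $O(\|V\|_p^p)$. To make your argument work you would have to adopt this ``one $V$ outside the integral'' structure (or prove new endpoint estimates that the paper's toolkit does not provide).
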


Observe that, without loss of generality, the above theorem needs only
a proof for the special case when~$H$ and~$V$ are self-adjoint
operators.  Indeed, let us assume that the theorem is proved in the
self-adjoint case, that is for every self-adjoint operators ~$H$ and~$V$ from
$\cS^p$ the existence of $\delta^{(k)}_H$'s satisfying
\eqref{eq:TaylorExpansion1} is established.
Fixing an infinite projection on ~${\mathcal H}$ with the infinite
orthocomplement, we may represent an arbitrary element
$X\in S^p$ as
\begin{equation*}
X =
\begin{pmatrix}
  X_{11} & X_{12} \\
  X_{21} & X_{22}
\end{pmatrix}
\end{equation*}
with $X_{ij}\in \cS^p$, $1\leq i,j\leq 2$. Furthermore, setting for an arbitrary
$X\in \cS^p$
\begin{equation*}
\alpha(X) =\frac {1}{2^{1/p}}
\begin{pmatrix}
  0 & X \\
  X^* & 0
\end{pmatrix}
\end{equation*}
we see that $\alpha$ is an isometrical embedding of $\cS^p$ into
itself (in fact, into a (real) Banach subspace of $\cS^p$
consisting of self-adjoint operators). Finally, for arbitrary operators
$H,V\in \cS^p$, we set  
$$\delta^{(k)}_{H}(V,\ldots ,V):=\frac
12\delta^{(k)}_{\alpha(H)}(\alpha(V),\ldots ,\alpha(V)),\ 1\leq k\leq
m.$$ It is trivial that  $\delta^{(k)}_H$'s are bounded symmetric
polylinear forms satisfying \eqref{eq:TaylorExpansion1}.

So, from now and until the end of the proof, we assume that~$H$ and~$V$ are self-adjoint operators such that 
$$ \left\| H \right\|_\infty \leq 1 \ \ \text{and}\ \ \left\| V
\right\|_\infty \leq 1. $$
Let~$f_p$ be the ~\lq\lq smoothed\rq\rq~ function~$\left| \cdot \right|^p$,
that is $f_p$ is a $C^\infty$ compactly supported function on $\Rl\backslash\{0\}$ such that $f_p(x) =
\left| x \right|^p$ for all~$\left| x
\right| \leq 2$.
Clearly, $$ \left\| H \right\|_p^p = \tr \left( f_p( H ) \right) \
\ \text{and}\ \ \left\| H + V \right\|_p^p = \tr \left( f_p(H + V)
\right). $$

\subsection*{The definition of functionals~$\delta_H^{(k)}$, $1\leq k\leq m$.}

We shall explicitly define the functionals~$\delta_H^{(k)}$'s
from~(\ref{eq:TaylorExpansion1}) in~(\ref{deltaFnDef}) below.  However,
given that the definition in~(\ref{deltaFnDef}) is rather complex, we
shall first give some guiding explanations.

We observe first that if~$\delta^{(k)}_H$ is a set of functionals from
the expansion~(\ref{eq:TaylorExpansion1}), then it is readily seen
that $$ \frac {d^k}{dt^k} \left[ \tr \left( f_p(H_t) \right) \right]
\Bigr|_{t = 0} = k!\, \delta^{(k)}_{H} \Bigl( \underbrace{V, \ldots,
  V}_{\text{$k$-times}} \Bigr), $$
where~$H_t = H + tV$.  On the other
hand, it is known from~\cite[Theorem
5.6]{Peller2006} and ~\cite[Theorem 5.7]{ACDS} that
$$ \frac {d^{k}}{dt^k} \left[ f_p(H_t) \right] = k!\,
T_{f_p^{[k]}}^{H_t} \Bigl( \underbrace{V, \ldots,
V}_{\text{$k$-times}} \Bigr), $$ where, for a function $\phi\in \cC_m$, we used the
abbreviation
\begin{equation}\label{abbreviation}
T^H_{\phi} = T^{\tilde H}_{\phi},\ \tilde
H = \Bigl( \underbrace{H, \ldots, H}_{\text{$m+1$-times}} \Bigr).
\end{equation}
Comparing the two identities above, it seems natural to suggest
the following definition for the functionals~$\delta^{(k)}_H$:
$$\delta_H^{(k)} (V_1, \ldots, V_k) = \tr \left( T_{f^{[k]}_p}^H
\left( V_1, \ldots, V_k \right) \right),\quad
V_1,\ldots ,V_k\in\mathcal S_p.$$ However, this suggestion is flawed
since a combination of Lemma~\ref{HolderToBesov} and
Theorem~\ref{DDImpl} (i) yields only that
$\|T_{f^{[k]}_p}^H\|_{\tilde p\to \frac
pk}\le\const\,\|f_p\|_{\tilde B_{\infty 1}^k},$ where $\tilde
p=(\underbrace {p, p,\dots, p}_{\text{$k$-times}})$, in particular
$$ U_k := T_{f^{[k]}_p}^H \left( V_1, \ldots, V_k
\right) \in \cS^{\frac pk},\ \ k = 1, \ldots, m. $$
In other words, it is not known (and not clear) whether~$U_k \in \cS^1$.

To circumvent this difficultly, we use the approach implicitly suggested
in~\cite[Lemma 2.2]{PSS-SSF2}. This approach is based on the identity.
$$ \tr \Big(
  T_{f^{[k]}}^{ H} (\underbrace{V, \ldots, V}_{\text{$k$-times}}) \Big) = \frac 1k\, \tr \Big( V
  \cdot T_{\tilde f^{[k]}}^{ H}(\underbrace{V, \ldots, V}_{\text{$k-1$-times}}) \Big), $$
   where~$\tilde f^{[k]}$ is
the polylinear integral momentum defined in~(\ref{DDtilde}), that
is for ~$f^{(m)} \in C_b$,
$$
\tilde f^{[m]}(x_0, \ldots, x_{m-1}) = g^{[m-1]}(x_0, \ldots,
x_{m-1}),\ \ g = f'.
$$
Using the identity above as a guidance and setting aside for a
moment the question why the operator  $V_1
  \cdot T_{\tilde f^{[k]}}^H(V_2, \ldots, V_k)$ (see below) belongs to the trace class
$\cS^1$ for every $k=2,\dots, m$,  we now
explicitly define the functional~$\delta^{[k]}_H$ as follows
\begin{equation}
  \label{deltaFnDef_square}
  \delta_H^{[k]}\left( V_1, \ldots,
    V_k \right) = \left\{\begin{array}{cl}{\tr \left( V_1 \,f_p'(H) \right),}
    &{ k=1}\\
    \frac 1k \, \tr \left( V_1 \cdot T_{\tilde f^{[k]}_p}^H (V_2,
    \ldots, V_k) \right), & 1<k\le m  \end{array}\right.
\end{equation}
The definition above is crucially important for the proof. In the next two
subsections we shall confirm that for every $1\leq k\leq m$ the
functional ~$\delta^{[k]}_H$ is well defined and
satisfies all the properties required in Theorem
\ref{TaylorExpansion} (excepting the symmetricity). However, the
functionals ~$\delta^{[k]}_H$ are not symmetric. To obtain
symmetric functionals satisfying all the requirements of Theorem
\ref{TaylorExpansion}, we resort to the standard symmetrisation
trick (see e.g. \cite[Section 40]{LS-1961}) by setting
\begin{equation}
  \label{deltaFnDef}
  \delta_H^{(k)}\left( V_1, \ldots,
    V_k \right) := \frac{1}{k!} \sum_\sigma \delta_H^{[k]} ( V_{\sigma(1)},
V_{\sigma(2)},\dots,V_{\sigma(k)} )
\end{equation}
where the sum is taken over all permutations
$\sigma(1), \sigma(2),\dots, \sigma(k)$ of the indices
$1,2,\dots,k$. It is trivial to verify that the functionals
~$\delta^{(k)}_H$'s satisfy already all the requirements of
Theorem \ref{TaylorExpansion} as soon as such a verification is
firstly performed for the functionals ~$\delta^{[k]}_H$'s.

Such a verification for functionals~$\delta^{[k]}_H$'s (including their
continuity) is presented in
Theorem~\ref{DeltaDefVerified} below.  The proof is partly based on our
improvement of the method of complex interpolation explained below.

\subsection*{Complex method of interpolation}

We shall now briefly recall the complex method of interpolation.
For a compatible pair of Banach spaces~$\left( A_0, A_1 \right)$,
and~$0 < \theta < 1$, the complex interpolation Banach
space~$A_\theta = (A_0, A_1)_\theta$ is defined as follows (see
e.g. \cite[Section 4.1]{KPS}): $$ A_\theta := \left\{x \in A_0 +
  A_1:\ \ \exists f \in \mathcal{F}(A_0, A_1)\ \ \text{such that}\ \
  x = f(\theta) \right\}. $$ Here the class~$\mathcal{F} (A_0,
A_1)$ consists of all  bounded and continuous functions~$f : \bar
S \mapsto A_0 + A_1$ defined on the closed strip $$\bar S :=
\left\{z \in\Cx:\ \ 0
  \leq \Re z \leq 1 \right\} $$  such that $f$ is analytic on the open strip
$S := \left\{z \in\Cx:\ \ 0  < \Re z < 1 \right\}$ and such that
$t\to f(j + it) \in A_j$, $j=0,1$ are continuous functions on the
real line.
We provide $\mathcal{F}(A_0, A_1)$ with the norm
$$
\|f\|_{\mathcal{F}(A_0, A_1)}:=\max_{j=0,1}\{c_{0}(f), c_{1}(f)\},
$$
where $c_j(f):=\sup_{t \in \Rl} \left\| f(j + it) \right\|_{A_j},$
$j=0,1.$

Setting
$$\left\| x \right\|_{A_\theta}:=\inf\{\,\|f\|_{\mathcal{F}(A_0,
A_1)}\ :\ f(\theta)=x,\ f\in \mathcal{F}(A_0, A_1)\}
$$
we obtain a Banach space $(A_\theta, \|\cdot\|_{A_\theta})$. It is
well known that $\left\| x \right\|_{A_\theta}\leq c_{0}^{1 -
\theta}(f)\, c_{1}^\theta(f)$, where $f(\theta)=x$, $f\in
\mathcal{F}(A_0, A_1)$.

\begin{lemma}
  \label{CIlemmaReplacement}
  Let~$F_z$ be the multilinear operator
$$ F_z:\underbrace{\cS^\infty
    \times \ldots \times \cS^\infty}_{\text{$m$-times}}\mapsto \cS^\infty,\ \ \forall z \in \bar S,$$
such that $z \mapsto F_z$ is  analytic in $S.$  If the constants
$$c_j = \sup_{t\in \Rl} \left\|F_{j + it} \right\|_{\tilde q^{(j)} \mapsto r_j},\mbox{ where }\tilde q^{(j)} = \left( q^{(j)}_1, \ldots, q^{(j)}_m \right)\mbox{ and }j = 0, 1$$
are finite, then 
  \begin{multline*}
    \left\| F_{\theta}
    \right\|_{\tilde q \mapsto r} \leq c_0^{1-\theta} c_1^\theta,\ \
    \text{where}\ \ \tilde q = (q_1, \ldots, q_m) \\ \text{and}\ \
    \frac 1 {q_k} = \frac {1-\theta}{q_k^{(0)}} + \frac
    \theta{q_k^{(1)}},\ \ k = 1, \ldots, m \\ \text{and}\ \ \frac 1r =
    \frac {1-\theta}{r_0} + \frac \theta{r_1}. 
  \end{multline*}
\end{lemma}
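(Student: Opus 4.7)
The plan is to invoke a standard Stein-type complex interpolation argument, adapted to the multilinear setting with Schatten class targets. I would proceed as follows.

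First, by the duality $\|X\|_r = \sup |\tr(WX)|$, where the supremum runs over simple (finite rank) operators $W$ with $\|W\|_{r'} \leq 1$ ($r'$ the H\"older conjugate of $r$), it suffices to bound $|\tr(W F_\theta(V_1,\dots,V_m))|$ by $c_0^{1-\theta}c_1^\theta$ whenever $V_k$ are simple with $\|V_k\|_{q_k}\leq 1$ and $W$ is simple with $\|W\|_{r'}\leq 1$. The restriction to simple operators guarantees that the trace expression is well-defined since $W\cdot F_\theta(V_1,\dots,V_m)$ will a priori be finite rank.

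Next, I would construct auxiliary analytic families. Let $V_k = U_k|V_k|$ be the polar decomposition, and define
\begin{equation*}
V_k(z) := U_k |V_k|^{\alpha_k(z)}, \qquad \alpha_k(z) := q_k\left(\frac{1-z}{q_k^{(0)}}+\frac{z}{q_k^{(1)}}\right),
\end{equation*}
so that $\alpha_k(\theta)=1$, $V_k(\theta)=V_k$, and $\|V_k(j+it)\|_{q_k^{(j)}} = \|V_k\|_{q_k}^{q_k/q_k^{(j)}}$ for $j=0,1$. Analogously, writing $W=U_W|W|$, I would define
\begin{equation*}
W(z) := |W|^{\beta(z)}U_W, \qquad \beta(z):=r'\left(\frac{1-z}{r_0'}+\frac{z}{r_1'}\right),
\end{equation*}
with $r_j'$ the H\"older conjugate of $r_j$; thus $W(\theta)=W$ and $\|W(j+it)\|_{r_j'}=\|W\|_{r'}^{r'/r_j'}$. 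Since $V_k$ and $W$ are simple, each map $z\mapsto V_k(z)$ and $z\mapsto W(z)$ is entire with values in $\cS^\infty$, and uniformly bounded on $\bar S$ in every $\cS^q$ norm ($1\leq q\leq\infty$).

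I would then set $G(z):=\tr\bigl(W(z)\cdot F_z(V_1(z),\dots,V_m(z))\bigr)$. The properties of $F_z$ (multilinearity, analyticity in $z$ on $S$, continuity on $\bar S$, mapping into $\cS^\infty$) combined with the analyticity of the auxiliary families and the finite-rank nature of $W(z)$ imply that $G$ is a bounded continuous function on $\bar S$ and analytic on $S$. On the boundary lines, H\"older's inequality for Schatten classes and the hypothesis give
\begin{equation*}
|G(j+it)| \leq \|W(j+it)\|_{r_j'}\,\|F_{j+it}\|_{\tilde q^{(j)}\to r_j}\prod_{k=1}^m \|V_k(j+it)\|_{q_k^{(j)}} \leq c_j\, \|W\|_{r'}^{r'/r_j'}\prod_{k=1}^m \|V_k\|_{q_k}^{q_k/q_k^{(j)}}.
\end{equation*}
Hadamard's three-lines theorem then gives $|G(\theta)| \leq c_0^{1-\theta}c_1^\theta$, once one observes that $\frac{1-\theta}{r_j'}\cdot r'$ summed correctly equals $1$, and likewise for each $q_k$. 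This yields the desired estimate on simple operators, and a standard density argument (using that simple operators are dense in $\cS^q$ for every $1\leq q<\infty$, and that $F_\theta$ is assumed to map $\cS^\infty\times\cdots\times\cS^\infty\to\cS^\infty$) extends it to all $V_k\in\cS^{q_k}$.

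The main technical obstacle, as I see it, is the justification that $G$ is genuinely holomorphic in $S$ and continuous up to $\bar S$. Because $F_z$ is defined only as a multilinear operator on $\cS^\infty$ a priori (not a function in the usual sense), one must confirm that composing with the analytic families $V_k(z),W(z)$ preserves analyticity. This is easy when $V_k,W$ are finite rank, because each $V_k(z),W(z)$ then depends entire analytically on $z$ in every $\cS^q$ norm, and multilinearity together with the hypothesized analyticity of $F_z$ transfers to $G$. The rest — three-lines, exponent arithmetic, density — is routine.
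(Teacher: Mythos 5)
Your argument is correct, but it takes a genuinely different route from the paper's. You run a Stein-type interpolation: reduce to finite-rank data by duality, build explicit analytic families $V_k(z)=U_k|V_k|^{\alpha_k(z)}$ and $W(z)$ from polar decompositions, form the scalar function $G(z)=\tr\bigl(W(z)F_z(V_1(z),\ldots,V_m(z))\bigr)$, and apply the three-lines theorem plus density. The paper instead works inside the abstract complex-interpolation framework: for each $V_k$ it picks a near-optimal function $g_k\in\mathcal{F}(\cS^{q_k^{(0)}},\cS^{q_k^{(1)}})$ with $g_k(\theta)=V_k$, sets $h(z)=F_z(g_1(z),\ldots,g_m(z))$, estimates $h$ on the two boundary lines exactly as you do, and then concludes by the definition of the interpolation norm together with the identification $(\cS^{r_0},\cS^{r_1})_\theta=\cS^r$ — so no duality, no polar-decomposition families, and no explicit three-lines argument on a scalar function (that is absorbed into the cited interpolation result). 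What your version buys is self-containedness (you never need the Calder\'on-space identification for the target spaces, only H\"older and Hadamard) at the cost of the extra bookkeeping with $W(z)$ and the density step; what the paper's version buys is brevity, since only the inputs need analytic families. Both proofs share the same implicit point you flag: analyticity and, more importantly, boundedness of the composed function on the closed strip is not literally guaranteed by the stated hypotheses and must be supplied (in the paper's application it is enforced by the damping factor $e^{z^2-\theta^2}$), so your treatment of this issue is at the same level of rigor as the paper's.
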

\begin{proof} Fix $\varepsilon>0.$ For every $1\leq k\leq m,$ there exists a function $g_k\in\mathcal{F}(\cS^{q_k^{(0)}},\cS^{q_k^{(1)}})$ such that
\begin{multline*}
  g_k(\theta)=V_k \ \ \text{and}\\ { { \left\|
        V_k\right\|_{q_k} \leq}} { \sup_{t \in \Rl}}\,
  { \max}
  \left\{\|g_k(it)\|_{{q_k^{(0)}}},\|g_k(1+it)\|_{{q_k^{(1)}}}
  \right\}\leq(1+\varepsilon)\|V_k\|_{q_k}.
\end{multline*}
Define an analytic
function $h$ in the strip $S$ by setting
$$h(z)=F_z(g_1(z),\cdots,g_m(z)).$$
By the assumption,
\begin{multline*}
  \|h(it)\|_{{r_0}}\leq\|F_{it}\|_{ \tilde q^{(0)}
    \mapsto 
    r_0}\|g_1(it)\|_{{q_1^{(0)}}}\cdots\|g_m(it)\|_{{q_m^{(0)}}}
  \\ \leq(1+\varepsilon)^m
  c_0\|V_1\|_{{q_1}}\cdots\|V_m\|_{{q_m}}.
\end{multline*}
Similarly,
$$\|h(1+it)\|_{{r_0}}\leq(1+\varepsilon)^m c_1\|V_1\|_{{q_1}}\cdots\|V_m\|_{{q_m}}.$$
It follows from the definition of complex interpolation method combined with the fact that $(\cS^{r_0},\cS^{r_1})_{\theta}=\cS^r,$ that
$$\|F_{\theta}(V_1,\cdots,V_m)\|_{r}\leq\|h(it)\|_{{r_0}}^{\theta}\|h(1+it)\|_{{r_1}}^{1-\theta}.$$
Since $\varepsilon$ is arbitrarily small, it follows that
$$\|F_{\theta}(V_1,\cdots,V_m)\|_{r}\leq c_0^{\theta}c_1^{1-\theta}\|V_1\|_{{q_1}}\cdots\|V_m\|_{{q_m}}.$$
\end{proof}

\subsection*{The functionals~$\delta^{[k]}_H$ are well-defined}

The following theorem is the key to showing that the
functionals~$\delta^{[k]}_H$'s are well-defined and continuous.

\begin{theorem}
  \label{DeltaDefVerified}
  If
~$H \in
  \cS^p$, then the operator~$T_{\tilde f^{[k]}_p}^{ H}$ maps
  $\underbrace{\cS^p \times \ldots \times \cS^p}_{\text{$k-1$-times}}
  \mapsto \cS^{p'}$, for every integral~$2 \leq k < p$.  Moreover, $$
  \left\| T_{\tilde f^{[k]}_p}^{ H} (V_1, \ldots, V_{k-1})
  \right\|_{p'} \leq \const \left\|  H \right\|_p^{p - k}\,
  \left\| V_1 \right\|_p \cdot \ldots \cdot \left\| V_{k-1}
  \right\|_{p}, $$ where $\frac{1}{p}+\frac{1}{p'}=1$.
\end{theorem}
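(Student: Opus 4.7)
The plan is to prove the theorem by Stein-style complex interpolation, applying Lemma~\ref{CIlemmaReplacement} to a suitably chosen analytic family. The key structural input is that since $k<p$, the function $f_p^{(k)}$ is continuous and compactly supported on $\Rl$, and on its support admits the factorization $f_p^{(k)}(x) = \eta(x)|x|^{p-k}$ for a bounded function $\eta$. A direct application of Theorem~\ref{PSStheorem} to $\tilde f_p^{[k]}$ viewed as a $(k-1)$-th order polynomial integral momentum of the bounded function $f_p^{(k)}$ only yields the mapping $(\cS^p)^{k-1} \to \cS^{p/(k-1)}$, which is strictly weaker than the desired $(\cS^p)^{k-1} \to \cS^{p'}$ (since $k - 1 < p - 1$). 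The extra integrability, which accounts for the factor $\|H\|_p^{p-k}$ in the estimate, must be harvested from the hypothesis $H \in \cS^p$.

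Next, I would construct an analytic family $F_z$ of multilinear operators indexed by $z \in \bar S$, built from two ingredients: a polynomial integral momentum $\phi_z$ of order $k-1$ whose generating function is $h_z(x) := \eta(x)|x|^{(p-k)(1-z)}$ (which is uniformly bounded on $\bar S$ because $f_p^{(k)}$ has compact support), together with weights of the form $|H|^{z\beta_j}$, with non-negative exponents $\beta_j$ summing to $p-k$, absorbed into the operator arguments $V_j$ via the algebraic identity~\eqref{TphiHomomorphism}. The scheme is arranged so that at one endpoint of the strip $F_z$ reproduces our target $T_{\tilde f_p^{[k]}}^H$, using the functional-calculus cancellation between the $|H|^{\beta_j}$ weights and the $|x|$-weighted function.

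At the two vertical sides of the strip, Theorem~\ref{PSStheorem} provides the required endpoint estimates. On $\Re z = 0$ the weights $|H|^{it\beta_j}$ act as unitaries and hence preserve the input $\cS^p$-norms, while $|h_{it}(x)| = |\eta(x)|\cdot|x|^{p-k} = |f_p^{(k)}(x)|$ is bounded, yielding
\[
\|F_{it}\|_{(\cS^p)^{k-1}\to\cS^{p/(k-1)}}\leq\const.
\]
On $\Re z=1$, the weights $|H|^{(1+it)\beta_j}$ contribute, through H\"older's inequality, the desired factor $\prod_j\|H\|_p^{\beta_j} = \|H\|_p^{p-k}$, while $h_{1+it} = \eta\cdot|x|^{-it(p-k)}$ is bounded by $\|\eta\|_\infty$. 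With the input Schatten indices adjusted so that their reciprocals sum to $1/p'$, Theorem~\ref{PSStheorem} then yields
\[
\|F_{1+it}\|_{(\cS^p)^{k-1}\to\cS^{p'}}\leq\const\,\|H\|_p^{p-k}.
\]
Lemma~\ref{CIlemmaReplacement} interpolates these two bounds, and specializing to the value of $z$ at which $F_z$ coincides with our target concludes the proof.

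The main obstacle lies in the precise construction of the analytic family so that $F_z$ is genuinely analytic in $z$ and coincides with $T_{\tilde f_p^{[k]}}^H$ at the appropriate value of $z$. The identity~\eqref{TphiHomomorphism} transfers factors between the function side and the operator side only one variable at a time, i.e.\ through single-variable multipliers $b_j(x_j)$, whereas our polynomial integral momentum $\tilde f_p^{[k]}$ depends on the linear combination $L = s_0 x_0 + \ldots + s_{k-1} x_{k-1}$ of all the variables. Reconciling this mismatch — which presumably exploits both the diagonality $\tilde H = (H,\ldots,H)$ and a symmetric distribution of the exponents $\beta_j$ across the slots, perhaps together with an auxiliary symmetrization over permutations of the $V_j$'s — is the technical heart of the argument.
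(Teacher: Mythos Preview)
Your overall strategy --- complex interpolation via Lemma~\ref{CIlemmaReplacement} applied to an analytic family built from polynomial integral momenta of $|x|^z$-type functions --- is exactly the approach the paper takes. However, the obstacle you flag in your final paragraph is genuine and is precisely the technical crux; your proposal does not resolve it, and the specific route you sketch (weights $|H|^{z\beta_j}$ cancelling against $|x|^{(p-k)(1-z)}$) cannot work as stated.

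The difficulty is that~\eqref{TphiHomomorphism} and~\eqref{TphiHomomorphismI} only move \emph{separable} factors $b_0(x_0)\cdots b_{k-1}(x_{k-1})$ between the symbol and the operator arguments, whereas the factor you want to transfer, $|L|^{(p-k)z}$ with $L = s_0 x_0 + \cdots + s_{k-1}x_{k-1}$, sits inside the simplex integral as a function of the non-separable combination $L$. Diagonality of $\tilde H$ does not help: the multiple operator integral is not determined by the restriction of the symbol to the diagonal $x_0 = \cdots = x_{k-1}$, so there is no ``functional-calculus cancellation'' between $|x_j|^{\beta_j}$ and $|L|^{\gamma}$.

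The paper's resolution is to interpolate not between exponents $0$ and $p-k$ but between the two \emph{even integers} $2n$ and $2n+2$ bracketing $p-k$. For an even integer power the multinomial theorem gives
\[
|L|^{2m} = L^{2m} = \sum_{m_0+\cdots+m_{k-1}=2m} C_{m_0,\ldots,m_{k-1}}\,(s_0x_0)^{m_0}\cdots(s_{k-1}x_{k-1})^{m_{k-1}},
\]
which \emph{is} a finite sum of separable terms. Writing $|L|^{z-k} = |L|^{z-k-2m}\cdot L^{2m}$ and expanding, each summand is a polynomial integral momentum (with polynomial $Q = s_0^{m_0}\cdots s_{k-1}^{m_{k-1}}$ and bounded function $\tilde h_z(x)=|x|^{z-k-2m}$) multiplied by the monomial $x_0^{m_0}\cdots x_{k-1}^{m_{k-1}}$, to which~\eqref{TphiHomomorphismI} applies, producing factors $H^{m_j}$ alongside the $V_j$'s; H\"older's inequality then extracts exactly $\|H\|_p^{2m}$. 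The analytic family is $F_{z,\epsilon} = e^{z^2-\theta^2}\,T^H_{\tilde f^{[k]}_{2z+2n+k+\epsilon}}$: the Gaussian factor damps the polynomial growth in $|\Im z|$ coming from the derivative constants $z(z-1)\cdots(z-k+1)$, and the $\epsilon$-shift (removed at the end via Lemma~\ref{DomConvLemma}) keeps $\tilde h_z$ continuous at the boundary $\Re z = 2m+k$.
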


\begin{proof}[Proof of Theorem~\ref{DeltaDefVerified}]
  We fix~$V_1, \ldots, V_{k-1} \in \cS^p$ and assume that~$\left\| V_j
  \right\|_p = 1$, $j = 1, \ldots, k-1$.  By Lemma \ref{HolderToBesov} applied to the smoothed function $f_p$, 
we have $f_p\in \tilde B^k_{\infty, 1}$ for every positive integral $k<p$.  Hence the function $h=f^{(k)}_p$ belongs to 
$\tilde B^0_{\infty, 1}$ and since $\tilde f^{[k]}_p$ is a $(k-1)$-th order polynomial integral momentum
associated with the function $h=f^{(k)}_p$ (see (\ref{DDtilde})) we infer from Theorem \ref{PIMpolyEst} that 
$\tilde f^{[k]}_p\in \cC_{k-1}$. Now, by Lemma \ref{lemma1}, we have $\hat T_{\tilde f^{[k]}_p}^{
H}=T_{\tilde f^{[k]}_p}^{ H}$ and applying Theorem~\ref{PSStheorem} we obtain
$$\| T_{\tilde f^{[k]}_p}^{ H}\|_{\tilde
p\to \frac p{k-1}}\le\const\,\|f^{(k)}_p\|_{\infty},$$
where $\tilde p=(\underbrace {p, p,\dots, p}_{\text{$k-1$-times}})$.
In particular, $$ T_{\tilde
f_p^{[k]}}^{ H}\left(
    \tilde V \right) \in \cS^{\frac p{k-1}},\ \ \text{where}\ \ \tilde
  V = \left( V_1, \ldots, V_{k-1} \right). $$
However, the estimate above is weaker than the claim of Theorem \ref{DeltaDefVerified}. To achieve the claim, we need a rather delicate application of the complex interpolation
  Lemma~\ref{CIlemmaReplacement}.
 For the rest of the proof, we fix an integral~$n \geq 0$ such
  that $$ 2n < p -k \leq 2n + 2. $$  In order to use  Lemma~\ref{CIlemmaReplacement}, we will
  construct a family of analytic operator valued functions $$
  z \in \Cx,\ \epsilon > 0 \mapsto F_{z, \epsilon} :
  \underbrace{\cS^\infty \times \ldots \times \cS^\infty}_{\text{$k-1$
      times}} \mapsto \cS^\infty, $$ such that $$ F_{j + it,
    \epsilon}: \underbrace{\cS^{q_j} \times \ldots \times
    \cS^{q_j}}_{\text{$k-1$ times}} \mapsto \cS^{r_j},\ \ j = 0, 1, $$
  where the exponents~$q_j$ and~$r_j$ are given by 
  \begin{multline*}
    \frac 1{q_0} =
    \frac {p-2n}{kp} \ \ \text{and}\ \ \frac 1{q_1} = \frac {p - 2n -
      2}{kp},\\ \frac 1{r_0} = \frac {2n}p + \frac {k-1}{q_0} \ \
    \text{and}\ \ \frac 1{r_1} = \frac {2n+2}p + \frac {k-1}{q_1}. 
  \end{multline*}
  We observe right away that due to the assumption ~$2 \leq k < p$ and the choice
  of~$n$, the indices~$r_j, q_j$ are non-trivial, that is $$ 1 < r_j,
  q_j < \infty, \ \ j = 0, 1. $$ In addition, the family~$F_{z,
    \epsilon}$  will have also satisfied the boundary estimates 
  \begin{equation}
    \label{DeltaDefVerifiedBoundaryEst}
    \left\|
    F_{j + it, \epsilon} \left( \tilde V \right) \right\|_{r_j} \leq
  \const \, \left\| H \right\|_p^{2n + 2j}\, \left\| V_1\right\|_{q_j}
  \cdot \ldots \cdot \left\| V_{k-1} \right\|_{q_j}, 
\end{equation}
with the constant in  \eqref{DeltaDefVerifiedBoundaryEst} being independent of~$\epsilon > 0$ and
  such that $$ F_{\theta, \epsilon} \left( \tilde V \right) = T^H_{\tilde
    f^{[k]}_{p + \epsilon}} \left( \tilde V \right),\ \ \text{where}\
  \ \theta = \frac {p -k}2 - n. $$

  Given the family~$F_{z, \epsilon}$ as above and using
  Lemma~\ref{CIlemmaReplacement}, we readily arrive at the estimate $$
  \left\| F_{\theta, \epsilon} \left( \tilde V \right) \right\|_{p'} =
  \left\| T^H_{\tilde f^{[k]}_{p + \epsilon}} \left( \tilde V \right)
  \right\|_{p'} \leq \const\, \left\| H \right\|_p^{p - k}\, \left\|
    V_1\right\|_p\ \cdot \ldots \cdot \left\| V_{k-1} \right\|_p, $$
where the constant is independent of~$\epsilon > 0$.  The claim of the theorem
   $$ \left\| T^H_{\tilde f^{[k]}_{p}} \left( \tilde V \right)
  \right\|_{p'} \leq \const\, \left\| H \right\|_p^{p - k}\, \left\|
    V_1\right\|_p\ \cdot \ldots \cdot \left\| V_{k-1} \right\|_p $$
now follows from Lemma~\ref{DomConvLemma}, which is applicable due to pointwise convergence $$
  \lim_{\epsilon \rightarrow 0} f_{p+\epsilon}^{(k)} (x) =
  f^{(k)}_p(x),\ \ x \in \Rl, $$ of compactly supported continuous functions. 

We now focus on the construction of the family~$\left\{F_{z,  \epsilon} \right\}_{\epsilon > 0}$.  The construction is based
  on the following auxiliary lemma.

  \begin{lemma}
    \label{DeltaDefVerifiedLemmaAux}
    Let~ $f_z(x) := \left[ f_1(x) \right]^z$, $z\in \mathbb{C}$ be the
    analytic continuation of~the mapping $p \rightarrow f_p$
    to~$\mathbb{C}$, and let ~$$ z \mapsto \tilde F_{z}:
    \underbrace{\cS^\infty \times \ldots \times
      \cS^\infty}_{\text{$k-1$ times}} \mapsto \cS^\infty $$ be the
    analytic (in $\mathbb{C}$) operator valued function given by $$
    \tilde F_{z} \left( \tilde V \right) = T^H_{\tilde f^{[k]}_z}
    \left( \tilde V \right), \ \ { \tilde V = (V_1, \ldots ,
    V_{k-1})}. $$ Let~$m \geq 0$ be an integer such that~$\Re z > 2m +
    k$.  If\/~$1 < r, q < \infty$ are such that $$ \frac 1r = \frac
    {2m}p + \frac {k-1}q, $$ then, we have the following estimate
\begin{equation}\label{n1}
\left\|
      \tilde F_z \left( \tilde V \right) \right\|_r \leq \const\,
    \left( 1 + \left| \Im z \right| \right)^k\, \left\| H
    \right\|_p^{2m} \, \left\| V_1 \right\|_q \cdot \ldots \cdot
    \left\| V_{k-1} \right\|_q
\end{equation} with constant being independent of~$z$.
  \end{lemma}

  The proof of the lemma will follow momentarily.  However, we shall
  first finish the proof of the theorem.  Given the lemma above it is
  now straightforward.  Indeed, we choose the family~$\left\{F_{z,
      \epsilon} \right\}_{\epsilon > 0}$ as follows $$ F_{z, \epsilon}
  = e^{z^2 - \theta^2}\, \tilde F_{2z + 2n + k + \epsilon}. $$
  Clearly, $$ F_{\theta, \epsilon} = \tilde F_{p + \epsilon} =
  T^H_{\tilde f^{[k]}_{p + \epsilon}}. $$ Also, the boundary
  estimates~(\ref{DeltaDefVerifiedBoundaryEst}), both follow from the
  lemma with~$r = r_j$, $q=q_j$, $m = n + j$, $z=2j+2n+k+\epsilon$, $j=0,1$.  Observe that the
  polynomial growth with respect to~$\left| \Im z \right|$ in the
  lemma is controlled by the exponential decay of the function~$e^{z^2 - \theta^2}$ on the boundary of the strip~$S$.  Thus, the theorem
  is completely proved.
\end{proof}

\begin{proof}[Proof of Lemma~\ref{DeltaDefVerifiedLemmaAux}]
 We write the $k$-th derivative as 
    \begin{multline*}
      f^{(k)}_z (x)\, {=\frac{z(z-1)(z-2)\ldots (z-k+1)}{( 1 + \left|
            \Im z \right| )^k}(\sgn x)^k( 1 + \left| \Im z
        \right|)^k|x|^{z-k}}\\ = { w(z)}\, \left( 1 + \left| \Im z
        \right| \right)^k\, \left| x \right|^{z - k}, \ \ \left| x
      \right| \leq 2,\ \ \text{where} \\ { \sup_{z \in \bar
        S} \left| w(z) \right|
      \leq \const,\ \ w(z) {=(\sgn x)^k\frac{z(z-1)(z-2)\ldots (z-k+1)}{( 1 +
          \left| \Im z \right| )^k}}}. \end{multline*}  
The derivative above is continuous,
    since~$\Re z > k$.  Moreover, by Lemma~\ref{HolderToBesov}, the
    function $h=f^{(k)}_z$ belongs to $\tilde B^0_{\infty, 1}$.

    We assume for simplicity that $$ \left\| V_1 \right\|_q = \ldots =
    \left\| V_{k-1} \right\|_q = 1. $$ We then have to show that
    \begin{equation}
      \label{DeltaDefVerifiedWeakEst}
      \left\| T_{\tilde f^{[k]}_{z}}^H \left(\tilde V
        \right) \right\|_r \leq \,
      \const \left( 1 +
        \left| \Im z \right|\right)^k\, \left\| H \right\|^{2m}_p.
    \end{equation}
Recall (see \eqref{DDintRep}) that~$\tilde f^{[k]}_{z}$ is the polynomial
    integral momentum of order~$k-1$ associated with the polynomial~$Q
    = 1$ and the function~$h = f^{(k)}_{z}$. Using the polynomial
    expansion
  \begin{multline*}
    \left| s_0 x_0 + \ldots + s_{k-1} x_{k-1} \right|^{2m} \\ =
    \sum_{m_0 + \ldots + m_{k-1} = 2m} C_{m_0, \ldots, m_{k-1}} \,
    (s_0 x_0)^{m_0} \cdot \ldots \cdot (s_{k-1} x_{k-1})^{m_{k-1}} \\
    \text{where}\ \ C_{m_0, \ldots, m_{k-1}} = \frac {\left( 2m
      \right)!}{m_0! \cdot \ldots \cdot m_{k-1}!},
  \end{multline*}
we represent the momentum~$\tilde f^{[k]}_{z}$ via \eqref{DDintRep} as follows 
  \begin{multline*}
    \tilde f^{[k]}_{z} {(x_0,\ldots ,x_{k-1}) :=
      \int_{S_{k-1}} f^{(k)}_{z}\left(
        s_0x_0+\ldots +s_{k-1}x_{k-1} \right) \, d\sigma_{k-1}} \\ ={ w(z)\,}
    \left( 1 + \left| \Im z \right| \right)^k\,\int_{S_{k-1}} \left|
      s_0 x_0 + \ldots + s_{k-1} x_{k-1} \right|^{z - k} \,
    d\sigma_{k-1} \\ = { w(z)\,} \left( 1 + \left| \Im z \right|
    \right)^k\,\int_{S_{k-1}} \left| s_0 x_0 + \ldots + s_{k-1}
      x_{k-1} \right|^{z - k - 2m + 2m} \, d\sigma_{k-1} \\ = { w(z)\,}
    \left( 1 + \left| \Im z \right| \right)^k\,\int_{S_{k-1}} \tilde
    h_{z}(s_0x_0+\ldots +s_{k-1}x_{k-1})\, \left| s_0 x_0 + \ldots +
      s_{k-1} x_{k-1} \right|^{2m}\, d\sigma_{k-1} \\ ={ w(z)\,} \left( 1
      + \left| \Im z \right| \right)^k\,\int_{S_{k-1}} \tilde
    h_{z}(s_0x_0+\ldots +s_{k-1}x_{k-1}) \\ \times\sum_{m_0 + \ldots +
      m_{k-1} = 2m} C_{m_0, \ldots, m_{k-1}} \, (s_0 x_0)^{m_0} \cdot
    \ldots \cdot (s_{k-1} x_{k-1})^{m_{k-1}} \, d\sigma_{k-1} \\
    ={ w(z)\,} \left( 1 + \left| \Im z \right| \right)^k\,\sum_{m_0 +
      \ldots + m_{k-1} = 2m} C_{m_0, \ldots, m_{k-1}} \,x_0^{m_0}
    \cdot \ldots \cdot x_{k-1}^{m_{k-1}} \\ \times\int_{S_{k-1}}
    s_0^{m_0} \cdot \ldots \cdot s_{k-1}^{m_{k-1}}\, \tilde
    h_{z}(s_0x_0+\ldots +s_{k-1}x_{k-1}) \, d\sigma_{k-1} \\ ={ w(z)\,}
    \left( 1 + \left| \Im z \right| \right)^k\,\sum_{m_0 + \ldots +
      m_{k-1} = 2m} C_{m_0, \ldots, m_{k-1}} \,x_0^{m_0} \cdot \ldots
    \cdot x_{k-1}^{m_{k-1}}\,\phi_{z, m_0, \ldots, m_{k-1}} \left(
      x_0, \ldots, x_{k-1}\right)\\ ={ w(z)\,} \left( 1 + \left| \Im z
      \right| \right)^k \sum_{m_0 + \ldots + m_{k-1} = 2m} C_{m_0,
      \ldots, m_{k-1}}\, f_{z, m_0, \ldots, m_{k-1}}{(x_0,
      \ldots, x_{k-1})},
  \end{multline*}
  where $$ f_{z, m_0, \ldots, m_{k-1}} (x_0, \ldots, x_{k-1}) =
  x_0^{m_0} \cdot \ldots \cdot x_{k-1}^{m_{k-1}} \cdot \phi_{z, m_0,
    \ldots, m_{k-1}} \left( x_0, \ldots, x_{k-1} \right) $$ and where
  the {$(k-1)$-th} polynomial integral
  momentum~$\phi_{{z,}m_0, \ldots, m_{k-1}}$ is associated
  with the function $$ \tilde h_{z}(x) = \left| x \right|^{z-k-2m} $$
  and the polynomial $$ Q_{m_0, \ldots, m_{k-1}} (s_1, \ldots,
  s_{k-1}) = s_0^{m_0} \cdot \ldots \cdot s_{k-2}^{m_{k-2}} \cdot
  s_{k-1}^{m_{k-1}}. $$ Thus, to see~(\ref{DeltaDefVerifiedWeakEst})
  it is sufficient to estimate the individual summand in the multiple
  operator integral $$T_{\tilde f^{[k]}_{z}}^{ H}(\tilde
  V)= { w(z)\,} \left( 1 + \left| \Im z \right| \right)^k \sum_{m_0 +
    \ldots + m_{k-1} = 2m} C_{m_0, \ldots, m_{k-1}}\, T^{
    H}_{f_{z, m_0, \ldots, m_{k-1}}}(\tilde V).$$ Looking at the
  integrals associated with individual functions~$f_{z, m_0, \ldots,
    m_{k-1}}$ and appealing to ~(\ref{TphiHomomorphismI}), we obtain
\begin{equation}\label{n2}T_{f_{z,
      m_0, \ldots, m_{k-1}}}^{ H} (V_1, \ldots, V_{k-1}) =
  T_{\phi_{z, m_0, \ldots, m_{k-1}}}^{ H} \left( H^{m_0} V_1
    H^{m_1}, V_2 H^{m_2}, \ldots, V_{k-1} H^{m_{k-1}}
  \right).\end{equation} 

Recall that the function $\tilde h_{z}$ is compactly supported and that $\|\tilde h_{z}\|_\infty$ is uniformly bounded with respect to $0\leq z\leq 2$. Hence applying 
firstly Theorem~\ref{PSStheorem} to the right hand side of \eqref{n2} and then using H\"older inequality we arrive at
  and
  \begin{multline*}
    \left\| T_{f_{z, m_0, \ldots, m_{k-1}}}^{H}(V_1, \ldots, V_{k-1})
    \right\|_r \\ \leq \const\, \left\| H^{m_0} V_1 H^{m_1}
    \right\|_{\alpha_1}\, \left\| V_2 H^{m_2} \right\|_{\alpha_2}
    \cdot \ldots \cdot \left\| V_{k-1} H^{m_{k-1}}
    \right\|_{\alpha_{k-1}} \\ \leq \const \left\| H\right\|_p^{m_0}
    \left\| V_1 \right\|_q\, \left\| H\right\|_p^{m_1}\, \left\| V_2
    \right\|_q \, \left\| H\right\|_p^{m_2}\cdot \ldots \cdot \left\|
      V_{k-1} \right\|_q\,
    \left\| H\right\|_p^{m_{k-1}} \\
    \leq \const\, \left\| H \right\|_p^{2m}\, \left\| V_1 \right\|_q
    \cdot \ldots \cdot \left\| V_{k-1} \right\|_q \\ = \const\,
    \left\|
      H \right\|_p^{2m}  \ \ \text{where}\\
    \frac 1{\alpha_1} = \frac {m_0 + m_1}p + \frac 1{q},\ \ \frac
    1{\alpha_j} = \frac {m_j}p + \frac 1q,\ \ j = 2, \ldots, k-1.
  \end{multline*}
  Thus, the estimate~(\ref{DeltaDefVerifiedWeakEst}) is
  shown.  The proof of the lemma is finished.
\end{proof}

\begin{remark} The assumption $\Re z> 2m+k$ is used because the RHS of \eqref{n2} does not make sense when $\Re z=2m+k$. 
However, the LHS in \eqref{n1} does make sense even when $\Re z=2m+k,$ unless $m=0$. Consequently, 
the assertion of Lemma \ref{DeltaDefVerifiedLemmaAux} holds for $\Re z\geq 2m+k$ when $m>0$.
\end{remark}

\subsection*{Taylor expansion for~$t \mapsto \tr \left( f_p(H_t)
  \right)$}

Here, we deal with the final step of the proof of Theorem~\ref{TaylorExpansion}.  Let~$1 < p < \infty$. Fix self-adjoint elements~$H_1,
H_0 \in \cS^p$ such that $\|H_j\|_p\leq 1$, $j=0,1$ and set~$H_t := (1 - t)\, H_0 + t H_1$,  $V:=H_1-H_0$. 
We begin our discussion of Taylor expansion of ~$t \mapsto \tr \left( f_p(H_t) \right)$ with the simplest case, when $m=1$. Firstly, by the fundamental theorem of
the calculus, we write $$ \tr \left( f_p(H_{ 1}) \right) -\tr \left( f_p(H_0) \right) = \int_0^1 \frac {d}{dt} \tr \left(f_p(H_t) \right)\, dt. $$
Now, we use well-known formulae (see e.g. \cite{BiSo3} or \cite[Corollary 6.8]{PS-DiffP} together with \cite[Lemma 20]{CaPoSu}) and rewrite the preceding formula as
 $$ \tr \left( f_p(H_{ 1}) \right) -\tr \left( f_p(H_0) \right) = \int_0^1 \tr \left( V\, f'_p(H_t)\right)\, dt. $$
Now, we claim that the following formula holds for all $m \in \N:\ 1 < m < p$.
\begin{multline}
  \label{TaylorExpansionFinal}
  \tr \left( f_p\left( H_1 \right) \right) = \tr(f_p(H_0)) + \sum_{k = 1}^{m-1} \frac 1{k} \, \tr \Bigl( V\, T^{H_{0}}_{\tilde
    f^{[k]}_p}\Bigl(\underbrace{V, \ldots,
    V}_{\text{$k-1$-times}}\Bigr) \Bigr)\\ + \int_0^1 t^{m-1}\, \tr
  \Bigl( V\, T^{H_t,  H_0}_{\tilde
    f^{[m]}_p}\Bigl(\underbrace{V, \ldots,
    V}_{\text{$m-1$-times}}\Bigr) \Bigr)\, dt.
\end{multline}
In writing the multiple operator integral in the first line of ~(\ref{TaylorExpansionFinal}) we used convenion \eqref{abbreviation}, 
whereas in the second line we used a similar convention by replacing $ \Bigl(H_t,  \underbrace{H_0, \ldots,H_0}_{\text{$m-1$-times}} \Bigr)$ with just $(H_t,  H_0)$.

We prove~(\ref{TaylorExpansionFinal}) by the method of mathematical
induction.  The induction step is justified as follows. Assuming that
~(\ref{TaylorExpansionFinal}) holds for~$m -1$, we add and
subtract $$ \frac 1{m-1}\, \tr \Bigl( V\,
T^{{  H_{0}}}_{\tilde
f^{[m-1]}_p}\Bigl(\underbrace{V, \ldots,
  V}_{\text{$m-2$-times}}\Bigr) \Bigr),\ \ \left[ \pm \tr \left(V\, \tilde f'_p(H_0) \right)\ \text{if~$m = 2$} \right] $$ We then have
\begin{multline*}
  \tr \left( f_p\left( H_1 \right) \right) =
{ \tr(}f_p(H_0){ )} + \sum_{k =
    1}^{m-2} \frac 1{k}\, \tr \Bigl( V\, T^{{ 
H_{0}}}_{\tilde
    f^{[k]}_p}\Bigl(\underbrace{V, \ldots,
    V}_{\text{$k-1$-times}}\Bigr) \Bigr) \\ + \frac 1{m-1} \, \tr
  \Bigl( V\, T^{{  H_{0}}}_{\tilde
f^{[m-1]}_p}\Bigl(\underbrace{V, \ldots,
    V}_{\text{$m-2$-times}}\Bigr) \Bigr) \\ + \int_0^1 t^{m-2}\, \tr
  \Bigl( V\, T^{H_t,  H_{0}}_{\tilde
    f^{[m-1]}_p}\Bigl(\underbrace{V, \ldots,
    V}_{\text{$m-2$-times}}\Bigr) \Bigr) \, dt \\ - \frac 1 {m-1}\, \tr
  \Bigl( V\, T^{{  H_{0}}}_{\tilde
f^{[m-1]}_p}\Bigl(\underbrace{V, \ldots,
    V}_{\text{$m-2$-times}}\Bigr) \Bigr).
\end{multline*}
For the last two terms we now observe that
\begin{multline*}
  \int_0^1 t^{m-2}\, \tr \Bigl( V\, T^{H_t,  H_{0}}_{\tilde
    f^{[m-1]}_p}\Bigl(\underbrace{V, \ldots,
    V}_{\text{$m-2$-times}}\Bigr) \Bigr) - \frac 1{m-1}\, \tr \Bigl(
  V\, T^{{  H_{0}}}_{\tilde f^{[m-1]}_p}\Bigl(\underbrace{V,
\ldots,
    V}_{\text{$m-2$-times}}\Bigr) \Bigr) \\ = \int_0^1 t^{m-2}\,
  \left[ \tr \Bigl( V\, T^{H_t,  H_{0}}_{\tilde
      f^{[m-1]}_p}\Bigl(\underbrace{V, \ldots,
      V}_{\text{$m-2$-times}}\Bigr) \Bigr) - \tr \Bigl( V\,
    T^{{  H_{0}}}_{\tilde f^{[m-1]}_p}\Bigl(\underbrace{V,
\ldots,    V}_{\text{$m-2$-times}}\Bigr) \Bigr) \right]\, dt
\end{multline*}
Finally, by Theorem~\ref{PertubationFormula} (with $\phi=\tilde f^{[m-1]}_p$, $\psi=\tilde f^{[m]}_p$ and $h= f^{(m-1)}_p$, which belongs to the space 
$\tilde B^1_{\infty, 1}$ by Lemma \ref{HolderToBesov}), we see that 
$${ T^{H_t,
   H_{0}}_{\tilde f^{[m-1]}_p}\Bigl(\underbrace{V, \ldots,
  V}_{\text{$m-2$-times}}\Bigr) - T^{H_0}_{\tilde
  f^{[m-1]}_p}\Bigl(\underbrace{V, \ldots,
  V}_{\text{$m-2$-times}}\Bigr)\stackrel{Th.\ref{PertubationFormula}}=T^{H_t,H_0}_{\tilde
  f^{[m]}_p}\Bigl(H_t-H_0,\underbrace{V, \ldots,
  V}_{\text{$m-2$-times}}\Bigr)}$$$${ =T^{H_t,  H_{0}}_{\tilde
  f^{[m]}_p}\Bigl(t(H_1-H_0),\underbrace{V, \ldots,
  V}_{\text{$m-2$-times}}\Bigr)}= t T^{H_t,  H_{0}}_{\tilde
  f^{[m]}_p}\Bigl(\underbrace{V, \ldots,
  V}_{\text{$m-1$-times}}\Bigr). $$
This completes the proof of formula~(\ref{TaylorExpansionFinal}). Now, we are in a position to prove 
the expansion ~(\ref{eq:TaylorExpansion1}).
By adding and
subtracting to ~(\ref{TaylorExpansionFinal}) $$ \frac 1{m}\, \tr \Bigl( V\,
T^{{  H_{0}}}_{\tilde
  f^{[m]}_p}\Bigl(\underbrace{V, \ldots, V}_{\text{$m-1$-times}}\Bigr)
\Bigr), $$ we obtain that
\begin{multline*}
  \label{TaylorExpansionFinal}
  \tr \left( f_p\left( H_1 \right) \right) =
{ \tr(}f_p(H_0){ )} + \sum_{k =
    1}^{m-1} \frac 1{k} \, \tr \Bigl( V\, T^{{  H_{0}}}_{\tilde
    f^{[k]}_p}\Bigl(\underbrace{V, \ldots,
    V}_{\text{$k-1$-times}}\Bigr) \Bigr)\\ + \int_0^1 t^{m-1}\, \left[
    \tr \Bigl( V\, T^{H_t,  H_0}_{\tilde
      f^{[m]}_p}\Bigl(\underbrace{V, \ldots,
      V}_{\text{$m-1$-times}}\Bigr) \Bigr) - \tr \Bigl( V\,
    T^{{  H_{0}}}_{\tilde f^{[m]}_p}\Bigl(\underbrace{V,
\ldots,
      V}_{\text{$m-1$-times}}\Bigr) \Bigr) \right]\, dt.
\end{multline*}
Observe that the first line above, given the definition of the
functionals~$\delta^{[k]}_H$, is the complete left hand side
of~(\ref{eq:TaylorExpansion1}).  In other words, we have
\begin{multline*}
  \left\| H_{ 0} + V \right\|_p^p - \left\| H_{ 0}
\right\|_p^p -
  \sum_{k = 1}^m \dfun{k} \Bigl(\underbrace{V, \ldots,
    V}_{\text{$k$-times}}\Bigr) \\ = \int_0^1 t^{m-1}\, \left[ \tr \Bigl(
    V\, T^{H_t, \tilde H_m}_{\tilde f^{[m]}_p}\Bigl(\underbrace{V,
      \ldots, V}_{\text{$m-1$-times}}\Bigr) \Bigr) - \tr \Bigl( V\,
    T^{{ \tilde H_{m+1}}}_{\tilde f^{[m]}_p}\Bigl(\underbrace{V,
\ldots,
      V}_{\text{$m-1$-times}}\Bigr) \Bigr) \right]\, dt.
\end{multline*}
Setting in Theorem~\ref{HolderInFull}  (and  Remark~\ref{HolderInFullAlphaOne}) $A=H_t,B=H_0, V_1=\cdots=V_{m-1}=V$, $p_0=p_1=\cdots=p_{m-1}=p$, 
$\alpha=p-m$ and applying that theorem with $m-1$ instead of $m$,  
we have $\alpha/p+(m-1)/p=(p-m)/p+(m-1)/p=1/p'$ where $\frac 1p+\frac{1}{p'}=1$
and
 $$
\left\|T^{H_t,  H_0}_{\tilde f^{[m]}_p}\Bigl(\underbrace{V, \ldots,
  V}_{\text{$m-1$-times}}\Bigr) - T^{{  H_{0}}}_{\tilde
  f^{[m]}_p}\Bigl(\underbrace{V, \ldots, V}_{\text{$m-1$-times}}\Bigr)\right\|_{p'}
= O\left( \left\| V \right\|_p^{p-1} \right). $$
 
Thus, the expansion~(\ref{eq:TaylorExpansion1}) follows immediately via H\" older inequality.
Theorem~\ref{TaylorExpansion} is completely proved.

\section{Concluding remarks}

As we noted in the Introduction, our methods are also suitable to resolve a similar problem concerning 
differentiability properties of general non-commutative $L_p$-spaces associated with general semifinite von Neumann algebras $\mathcal M$ stated in \cite{PX}. 
In this paper we demonstrate such a resolution for the special case when $\mathcal{M}$ is an arbitrary type $I$ von Neumann algebra acting on a 
separable Hilbert space. Using well-known structural results for such algebras, it is easy to see  that it is suffcient to deal with 
$L_p$-spaces associated with the von Neumann tensor product 
$L_\infty(0,1)\bar \otimes B(H)$, or equivalently, it is sufficient to deal with Lebesgue-Bochner spaces 
$L_p(S^p):= L_p([0,1], S^p)$ (see e.g. \cite{Suk} and references therein). The case $1\leq p\leq 2$ is easy and in fact has been dealt with in full generality in 
\cite[Lemma 4.1]{DD}. In order to deal with the case $2\leq p<\infty$ and thus, with  higher order derivatives, it
 is convenient to cite the following result from \cite{LS-1974}.

\begin{theorem} \cite[Theorem 3.5]{LS-1974} Let $E$ be a Banach space, $(T, \Sigma, \mu)$ a measure space, and $k$ a positive
integer with $p > k$. If the norm $\|\cdot\|: E\mapsto \Rl$ is $k$-times continuously differentiable
away from zero and the $k$-th derivative of the norm in $E$ is uniformly bounded on the
unit sphere in $E$, then the norm $\|\cdot\|: L_p(E,\mu) \mapsto \Rl$ is $k$-times continuously
differentiable away from zero.
\end{theorem}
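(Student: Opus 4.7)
The plan is to reduce the claim to proving that the $p$-th power functional
\[
N(f):=\|f\|_{L_p(E,\mu)}^p=\int_T \|f(t)\|_E^p\,d\mu(t)
\]
is $k$-times continuously Fr\'echet differentiable on $L_p(E,\mu)\setminus\{0\}$. Since $s\mapsto s^{1/p}$ is $C^\infty$ on $(0,\infty)$, the chain rule then transfers $C^k$ smoothness to the norm of $L_p(E,\mu)$ itself away from zero, and it suffices to work with $N$ throughout.

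The first ingredient is a chain-rule and homogeneity analysis of the scalar function $\phi_p(x):=\|x\|_E^p$ on $E$. Differentiating the homogeneity identity $\|\lambda x\|=|\lambda|\cdot\|x\|$ shows that $D^j\|\cdot\|_E$ is $(1-j)$-homogeneous; the hypothesised uniform bound on $D^k\|\cdot\|_E$ on the unit sphere, combined with integration along rays from a fixed base point to reach the lower-order derivatives, then yields
\[
\bigl\|D^j\|\cdot\|_E(x)\bigr\|\leq C\,\|x\|_E^{\,1-j},\qquad x\neq 0,\ \ j=1,\ldots,k.
\]
Applying Fa\`a di Bruno's formula to $\phi_p=(\|\cdot\|_E)^p$ gives the pointwise estimate
\[
\bigl|D^j\phi_p(x)(v_1,\ldots,v_j)\bigr|\leq C_{p,j}\,\|x\|_E^{\,p-j}\,\|v_1\|_E\cdots\|v_j\|_E,\qquad j=0,1,\ldots,k.
\]
Because $p>k$, every exponent $p-j$ is strictly positive for $j\leq k$, so each $D^j\phi_p$ extends continuously (with value $0$) to $x=0$, and $\phi_p\in C^k(E)$ globally.

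I would next propose the Bochner-integral expressions
\[
D^jN(f)(g_1,\ldots,g_j):=\int_T D^j\phi_p(f(t))\bigl(g_1(t),\ldots,g_j(t)\bigr)\,d\mu(t),\quad 1\leq j\leq k,
\]
as candidates for the Fr\'echet derivatives of $N$ at $f$. The pointwise bound above together with H\"older's inequality with exponents $p/(p-j),p,\ldots,p$ (valid since $p>j$) shows that each integral is absolutely convergent and defines a bounded symmetric $j$-linear form of operator norm at most $C\,\|f\|_{L_p(E,\mu)}^{\,p-j}$. Fr\'echet differentiability is then verified inductively on $j$ by plugging in the pointwise Taylor expansion of $\phi_p$ at $f(t)$ in direction $V(t)$, controlling the integrated remainder via a pointwise majorant of the form $C(\|f(t)\|_E+\|V(t)\|_E)^{p-j-1}\,\omega(\|V(t)\|_E)\,\|V(t)\|_E^{\,j+1}$ with $\omega\to 0$, and applying H\"older once more to obtain the required $o(\|V\|_{L_p(E,\mu)}^{\,j})$. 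Continuity of $f\mapsto D^kN(f)$ in the norm of $k$-linear forms follows from a dominated-convergence argument: pass to an a.e.\ convergent subsequence of $f_n\to f$, use the continuity of $D^k\phi_p$ on all of $E$, and dominate the integrand by $C\,\max(\|f_n(t)\|_E,\|f(t)\|_E)^{\,p-k}\|g_1(t)\|_E\cdots\|g_k(t)\|_E$, which is uniformly integrable over $(g_1,\ldots,g_k)$ in the unit ball by the same H\"older estimate.

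The principal obstacle is the uniform control of the Taylor remainder. The derivatives of $\|\cdot\|_E$ exhibit the singularity $\|x\|_E^{\,1-j}$ at the origin, while membership of $f$ in $L_p(E,\mu)$ does not prevent $f(t)$ from taking arbitrarily small values on large subsets of $T$. The assumption $p>k$ supplies exactly the excess integrable power $\|x\|_E^{\,p-k}$ needed to absorb the worst singularity, and the \emph{uniform} (as opposed to pointwise) boundedness of $D^k\|\cdot\|_E$ on the unit sphere is what upgrades the pointwise-in-$t$ estimates to the uniform-in-direction remainders required for Fr\'echet (rather than merely Gateaux) differentiability. Balancing these two competing effects under the integral sign is the technical heart of the argument.
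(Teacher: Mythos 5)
First, a point of reference: the paper does not prove this statement at all --- it is quoted verbatim from Leonard--Sundaresan \cite[Theorem 3.5]{LS-1974} and used as a black box in the concluding remarks, so there is no internal proof to compare against. On its merits, your proposal follows what is essentially the classical route for such results: show that $\phi_p(x)=\|x\|_E^p$ is $C^k$ on all of $E$ with $\|D^j\phi_p(x)\|\leq C\|x\|_E^{p-j}$ (homogeneity of the norm derivatives plus Fa\`a di Bruno, using $p>k$ to tame the origin), then treat $N(f)=\int_T\phi_p(f(t))\,d\mu(t)$ by differentiation under the integral, with H\"older's inequality for boundedness of the candidate $j$-linear forms and an integrated remainder estimate for Fr\'echet differentiability, and finally the scalar chain rule for $s\mapsto s^{1/p}$. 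This is the right architecture and the exponent bookkeeping is correct.

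Two steps are thinner than they should be. First, the hypothesis bounds only the $k$-th derivative on the unit sphere; your phrase ``integration along rays'' should be made precise: $j=1$ is automatic (the derivative of a norm is a norming functional, of dual norm $1$), and for $2\leq j\leq k-1$ one uses homogeneity together with the telescoping identity $D^{j}\|\cdot\|(2x)-D^{j}\|\cdot\|(x)=\int_1^2 D^{j+1}\|\cdot\|(tx)(x,\cdot)\,dt$ and the fact that $2^{1-j}\neq 1$, descending from $j+1$ to $j$; as written the claim is asserted rather than derived. Second, and more substantively, your remainder majorant $C(\|f(t)\|_E+\|V(t)\|_E)^{p-j-1}\,\omega(\|V(t)\|_E)\,\|V(t)\|_E^{j+1}$ does not by itself yield the required $o(\|V\|_{L_p(E,\mu)}^{\,j})$: convergence $\|V\|_{L_p(E,\mu)}\to 0$ does not make $\|V(t)\|_E$ small pointwise, so a pointwise modulus $\omega$ cannot be pulled out of the integral. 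What is actually needed is continuity of the superposition (Nemytskii) operator $f\mapsto D^j\phi_p\circ f$ from $L_p(E,\mu)$ into $L_{p/(p-j)}$ of $j$-linear-form-valued functions, proved by the Krasnosel'skii-type argument (convergence in measure along subsequences plus uniform integrability supplied by the growth bound $\|D^j\phi_p(x)\|\leq C\|x\|_E^{p-j}$); one then writes the first-order difference of $D^{j-1}N$ via the fundamental theorem of calculus and applies H\"older with exponents $p/(p-j),p,\ldots,p$. You in fact use exactly this subsequence-plus-domination device for the continuity of $D^kN$, so the fix is to run the same argument at every order $j$ in place of the pointwise $\omega$-majorant; with that replacement the proof closes.
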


In view of this result, we shall obtain the result similar to Theorem \ref{TJ} as soon as we verify the assumptions of the preceding theorem concerning the norm 
$\|\cdot\|: S^p\mapsto \Rl$. We shall verify these assumptions on the (open) unit ball $S^p_1$ of $S^p$. 
To this end, we firstly need to verify the the mapping $\|\cdot\|_p: S^p\mapsto \Rl$ is $k$-times continuously differentiable
that is (see  \cite[p.232]{LS-1974}) it is  $k$-times differentiable and the $k$-th derivative $\delta^{(k)}$ is continuous on $S^p_1$. 
Secondly, we need to ascertain that the derivative $\delta^{(k)}$  is uniformly bounded on $S^p_1$. Now, the existence of the derivative 
$\delta^{(k)}$ is of course our main result Theorem \ref{TaylorExpansion}. The continuity  of $\delta^{(k)}$ on the unit sphere of $S^p$ follows from the definitions 
\eqref{deltaFnDef_square} and \eqref{deltaFnDef} together with the estimate obtained in Theorem \ref{DeltaDefVerified} and H\" older inequality. 
Finally, the same estimate also yields uniform boundedness of $\delta^{(k)}$ on $S^p_1$. This completes the proof of analogue of Theorem \ref{TJ} for the space $L_p(S^p)$. 
The general case of an arbitrary semifinite von Neumann algebra $\mathcal M$ of type $II$ depends on substantial technical preparations 
needed to extend definitions \eqref{deltaFnDef_square} and \eqref{deltaFnDef} to the setting of unbounded operators from $L_p(\mathcal M)$ 
 and will be dealt separately.


\end{document}